\documentclass{article}

\usepackage[utf8]{inputenc}
\usepackage{amsmath, amssymb, amsthm, bm, mathrsfs, mathtools, thmtools}
\usepackage[cal=cm]{mathalpha}
\usepackage{stackengine} 
\DeclareMathAlphabet{\dutchcal}{U}{dutchcal}{m}{n} 
\usepackage{centernot}
\usepackage[top=1.5cm, bottom=1.5cm, left=2cm, right=2cm]{geometry}
\usepackage{csquotes, authblk, cases, enumitem}
\usepackage{graphicx, float, caption, subcaption, adjustbox}
\usepackage{tikz-cd}
\usepackage[numbers,sort]{natbib}
\usepackage[section]{algorithm}
\usepackage{algpseudocode}
\usepackage{array, tabularx, xcolor, colortbl, booktabs}
\usepackage{color, xcolor}
\usepackage[para]{footmisc}
\usepackage[normalem]{ulem}
\usepackage[hidelinks,hypertexnames=false]{hyperref}
\counterwithin{figure}{section}
\counterwithin{table}{section}
\numberwithin{equation}{section}
\definecolor{darkred}{rgb}{0.8,0,0}
\definecolor{darkgreen}{rgb}{0,0.8,0}

\DeclareMathOperator{\diag}{\operatorname{diag}}

\DeclareMathOperator{\supp}{\operatorname{supp}}
\DeclareMathOperator{\keff}{\kappa_{\operatorname{eff}}}

\newcommand{\dxi}{d\xi}
\newcommand{\dx}{d\bm{x}}

\newcommand{\dbxi}{d\bm{\xi}}
\newcommand{\restr}[2]{\left.{#1}\right|_{#2}}
\newenvironment{frcseries}{\fontfamily{frc}\selectfont}{}

\usepackage{etoolbox}
\usepackage{appendix}
\usepackage{cleveref}
\AtBeginEnvironment{appendices}{\crefalias{section}{appendix}}

\theoremstyle{definition}
\newtheorem{definition}{Definition}[section]
\newtheorem{remark}[definition]{Remark}
\newtheorem{example}[definition]{Example}
\newtheorem{theorem}[definition]{Theorem}
\newtheorem{lemma}[definition]{Lemma}
\newtheorem{corollary}[definition]{Corollary}

\AtBeginEnvironment{remark}{\crefalias{definition}{remark}}         
\AtBeginEnvironment{example}{\crefalias{definition}{example}}       
\AtBeginEnvironment{theorem}{\crefalias{definition}{theorem}}       
\AtBeginEnvironment{lemma}{\crefalias{definition}{lemma}}           
\AtBeginEnvironment{corollary}{\crefalias{definition}{corollary}}   
\AtBeginEnvironment{conjecture}{\crefalias{definition}{conjecture}} 

\crefname{definition}{Definition}{Definitions}
\crefname{remark}{Remark}{Remarks}
\crefname{example}{Example}{Examples}
\crefname{theorem}{Theorem}{Theorems}
\crefname{lemma}{Lemma}{Lemmas}
\crefname{corollary}{Corollary}{Corollaries}
\crefname{conjecture}{Conjecture}{Conjectures}

\title{Deflation-based preconditioning for immersed finite element methods and immersogeometric analysis}
\author[1]{Yannis Voet \thanks{yannis.voet@epfl.ch}}
\author[2]{Matthias Möller \thanks{M.Moller@tudelft.nl}}
\author[1]{Pablo Antolin \thanks{pablo.antolin@epfl.ch}}
\author[2]{Cornelis Vuik \thanks{C.Vuik@tudelft.nl}}
\affil[1]{\small MNS, Institute of Mathematics, École polytechnique fédérale de Lausanne, Station 8, CH-1015 Lausanne, Switzerland}
\affil[2]{\small Numerical Analysis, Applied Mathematics, Faculty of Electrical Engineering Mathematics and Computer Science, \protect\\ Delft University of Technology, Delft, The Netherlands}
\date{\today}

\begin{document}
\maketitle

\begin{abstract}
Trimming is a ubiquitous operation in computer-aided-design whereby parts of a geometry are merged, intersected, or simply discarded. While it grants virtually unlimited flexibility in geometric design, it introduces a plethora of other difficulties when such geometries are used within immersed finite element methods. In particular, small cut elements lead to severely ill-conditioned system matrices requiring dedicated penalization, stabilization, or preconditioning techniques. In this work, we highlight the limitations of existing preconditioning strategies by first carefully examining the condition number of the diagonally scaled matrix and later providing realistic counter-examples for some well-established preconditioning strategies. Building on those insights, we propose a robust deflation-based preconditioning technique tailored to immersed finite element methods. \\

\noindent \textbf{Keywords}:
Finite element analysis, Isogeometric analysis, Trimming, Preconditioning, Deflation.
\end{abstract}

\section{Introduction}
\label{se: introduction}
Finite element analyses are the gold standard for approximating the solution of partial differential equations (PDEs) and are routinely executed for engineering applications around the world. In most cases, the geometry is exceedingly complex and implicitly delimited by trimming curves or surfaces within computer-aided-design (CAD) software. Fixing CAD files and meshing the geometry soon becomes very time-consuming, if not practically impossible. Immersed finite element methods circumvent this issue by embedding the geometry in a simpler fictitious domain that is easily meshed. The idea is applicable to classical $C^0$ finite elements and advanced spline-based discretizations alike. The latter, commonly known as immersogeometric analysis, applies the fundamental concepts of isogeometric analysis (IGA) \cite{hughes2005isogeometric,cottrell2009isogeometric} in an immersed setting. IGA borrows the spline functions from CAD such as B-splines and non-uniform rational B-splines (NURBS) for representing the approximate solution. Although it was originally designed with the intention of exactly representing common geometries, it too falls short in describing the highly complex geometries commonly encountered in industrial applications. Therefore, immersed techniques, whether built from classical finite elements or smooth spline functions, have steadily gained a strong foothold in both academic research \cite{parvizian2007finite,schillinger2015finite} and industry \cite{messmer2022efficient,leidinger2019explicit}. Unfortunately, immersed methods introduce many other complications, including cumbersome techniques for (weakly) imposing essential boundary conditions \cite{de2018note}, integration on trimmed elements \cite{antolin2019isogeometric}, potential instabilities, and severe ill-conditioning. See \cite{de2023stability} for an overview of these issues. In this context, stability relates to the well-posedness of the discrete formulation, whereas conditioning pertains to the difficulty of solving the algebraic system of equations \cite{de2023stability}. This contribution specifically focuses on the latter. In immersed finite element methods, the ill-conditioning is caused by extremely small eigenvalues associated with eigenfunctions that are only supported on small trimmed elements \cite{de2017condition,de2019preconditioning,de2020multigrid,de2023stability}. This issue has driven intensive research, and multiple solutions have been proposed in the literature. Some of them also treat the aforementioned instability issue.
\begin{enumerate}[noitemsep]
    \item The so-called $\alpha$-stabilization (or ``material'' stabilization) technique within the finite cell method \cite{parvizian2007finite,schillinger2015finite} integrates the discrete variational formulation over the fictitious domain, weighted by a small penalty parameter $\alpha > 0$. While this method certainly improves the conditioning, it directly alters the (discrete) variational formulation and effectively computes the solution to a different problem. Large stabilization parameters lead to better conditioned systems at the price of larger errors in the solution \cite{garhuom2022eigenvalue,sartorti2025stabilization}. 
    \item Instead, eigenvalue stabilization techniques \cite{garhuom2022eigenvalue,eisentrager2024eigenvalue,sartorti2025stabilization} locally modify the system matrices by stabilizing the small eigenvalues of element matrices prior to assembly. However, this technique also modifies the original formulation, and selecting suitable stabilization parameters may not be obvious. Similarly to the $\alpha$-stabilization method, the error also increases for larger stabilization parameters, although to a milder extent \cite{garhuom2022eigenvalue}.
    \item In the CutFEM community, ghost penalty methods were proposed for improving the conditioning of the stiffness matrix by penalizing the jump of the normal derivatives near the boundaries \cite{burman2012fictitious,burman2015cutfem}. However, just like other penalty methods, large penalty values might undermine the accuracy.
    \item Polynomial extension (or aggregation) techniques locally substitute the polynomial segments of basis functions supported on small trimmed elements by their polynomial extension from good neighboring elements \cite{buffa2020minimal,burman2023extension}. Extended B-splines \cite{hollig2003finite,hollig2005introduction,hollig2003nonuniform,marussig2017stable,marussig2018improved} are a closely related concept that instead combines problematic basis functions with well-behaved ones to form a subspace. All those techniques stabilize the approximation space and have been successfully applied in a wide range of different settings, from the classical Poisson problem \cite{buffa2020minimal,burman2023extension} to challenging applications in explicit dynamics, also in conjunction with mass lumping \cite{burman2022explicit,voet2025stabilization,guarino2025stabilization}.
    \item In contrast, preconditioning techniques \cite{de2017condition,de2019preconditioning,de2020multigrid,jomo2019robust} change neither the variational formulation nor the approximation space but the finite element basis. Therefore, they preserve the original formulation and its solution. However, they only target the ill-conditioning, not the instability that is deeply ingrained in the approximation space.
\end{enumerate}

Instabilities are a major hurdle for time-dependent problems by causing extremely small step sizes in explicit dynamic analyses \cite{stoter2023critical,bioli2025theoretical,hollweck2026analysis}. However, if the only trouble is caused by the ill-conditioning, then there are good reasons for preconditioning the system matrices (i.e., choosing a different basis for the same approximation space) rather than altering the variational formulation or the approximation space itself. Firstly, preconditioning does not interfere with the finite element discretization. The preconditioner is a separate component, which is often built from the discretization but certainly does not alter it. Therefore, its implementation is less intrusive than ghost penalty methods or polynomial extension techniques that meddle with the variational formulation or the approximation space. Secondly, since preconditioning only changes the finite element basis, it preserves all the approximation properties of the original space. Although stabilization or aggregation techniques based on polynomial extensions also ensure good approximation properties, the same cannot be said of ``material'' stabilization techniques that significantly degrade the accuracy \cite{garhuom2022eigenvalue,sartorti2025stabilization}. Thus, preconditioning is an outstanding solution both from the implementation and approximation perspective. 

However, the design of efficient preconditioners is a different story. The subtleties reside in the understanding of the mechanisms that cause ill-conditioning. Two distinct mechanisms were clearly identified in \cite{de2017condition}. The first one is attributed to discrepancies in the scaling of the basis functions, while the second one is tied to near linear dependencies. Scaling discrepancies are easily resolved with a simple diagonal scaling (or Jacobi preconditioner), also for extended finite element methods \cite{lehrenfeld2017optimal}. However, eliminating near linear dependencies is much more difficult and has led to diverging views and strategies. For symmetric positive definite (SPD) matrices arising from immersed finite element or immersogeometric analysis, de Prenter et al.\ \cite{de2017condition} first suggested the Symmetric Incomplete Permuted Inverse Cholesky (SIPIC) preconditioner by combining diagonal scaling with local orthonormalizations of nearly linearly dependent (rescaled) basis functions. Although the method often performs well, it sometimes fails to detect near linear dependencies \cite{de2019preconditioning}. A more general and robust strategy built around similar concepts was later proposed for indefinite or nonsymmetric systems in the form of an additive \cite{de2019preconditioning} or multiplicative \cite{de2020multigrid} Schwarz preconditioner. Unfortunately, this method is also flawed. However, its vulnerability does not stem from the concept itself but rather from the selection strategy for the index blocks defining the preconditioner. Those index blocks must regroup the indices of nearly linearly dependent basis functions that later form the submatrices to be inverted within the Schwarz preconditioner. The block selection strategy first suggested in \cite{de2019preconditioning} sometimes fails to treat near linear dependencies, and more advanced strategies later proposed in \cite{de2020multigrid} suffer the same fate. To this date, the index block selection strategy has not been settled and essentially boils down to a compromise between efficiency and robustness. Many small index blocks are more efficient but may miss some near linear dependencies, while fewer and larger blocks are more robust but less efficient \cite{de2023stability}. 

Of course, discretization parameters such as the mesh size $h$ or polynomial degree $p$ may also contribute to the ill-conditioning of finite element system matrices \cite{gahalaut2012condition,gervasio2020computational}. For resolving those dependencies, some of the aforementioned strategies have been incorporated as smoothers in the multigrid framework \cite{de2020multigrid,jomo2021hierarchical}. However, these techniques address a different problem and clearly deserve a separate investigation. This contribution specifically focuses on the ill-conditioning originating from badly trimmed elements. After carefully examining the effectiveness of Jacobi preconditioning, we provide counter-examples highlighting the limitations of advanced preconditioning techniques. Building on those findings, we later propose a new deflation-based preconditioner that takes care of near linear dependencies in a different way. Deflation-based preconditioning is well-established in the numerical linear algebra community \cite{vuik1999efficient,frank2001construction,vermolen2004deflation,dwarka2022scalable} but has, to the best of our knowledge, never been applied to immersed finite element methods and immersogeometric analysis.

The outline for the rest of the article is as follows: after presenting in \Cref{se: model_problems} some model problems and elementary concepts of unfitted finite element discretizations, we recall in \Cref{se: ill_conditioning_origin} the sources of ill-conditioning for the finite element system matrices before analyzing existing preconditioning strategies in \Cref{se: preconditioning}. The most significant part of our analysis is devoted to the diagonal scaling (or Jacobi) preconditioner for which we derive detailed scaling relations for the condition number in dimensions 1 and 2 for various cut configurations. The insights drawn from this analysis later serve to identify counter-examples for some of the aforementioned preconditioning techniques. Finally, we present in \Cref{se: deflation} a novel deflation-based preconditioner tailored to immersed finite element methods. Deflation techniques are ideally suited when the convergence of iterative solvers is plagued by relatively few small eigenvalues \cite{vuik1999efficient}. Numerical experiments in \Cref{se: numerical_results} confirm its effectiveness by solving problems earlier preconditioners could not handle. Finally, \Cref{se: conclusion} draws the article to a close by summarizing our findings and suggesting directions for future research.

\section{Model problems and discretization}
\label{se: model_problems}
This article examines preconditioners for iteratively solving ill-conditioned linear systems $A\bm{x}=\bm{b}$, where $A$ is a symmetric positive definite (SPD) matrix originating from an immersed finite element discretization. In this setting, let $V_h \subset V$ denote a finite-dimensional subspace (of dimension $n$) of an abstract infinite-dimensional Hilbert space $V$ of functions defined over a domain $\Omega \subset \mathbb{R}^d$. Three common examples for $A\bm{x}=\bm{b}$ are listed below.

\begin{itemize}
    \item Finite element discretizations of elliptic PDEs commonly require finding a solution $u_h \in V_h$ of a discrete variational problem
    \begin{equation}
    \label{eq: discrete_weak_form_elliptic}
        a(u_h,v_h)=F(v_h) \qquad \forall v_h \in V_h
    \end{equation}
    where $a \colon V \times V \to \mathbb{R}$ is a continuous and coercive (or elliptic) bilinear form and $F \colon V \to \mathbb{R}$ is a linear functional. By expanding the discrete solution $u_h$ in a basis $\Phi=\{\varphi_1,\dots,\varphi_n\}$ of $V_h$ and testing against every basis function, \eqref{eq: discrete_weak_form_elliptic} is equivalent to a linear system $K\bm{u}=\bm{f}$ involving a ``stiffness'' matrix $K \in \mathbb{R}^{n \times n}$. For instance, for the standard Poisson problem, 
    \begin{equation*}
        K_{ij} = \int_{\Omega} \nabla \varphi_i \cdot \nabla \varphi_j \dx.
    \end{equation*}
    \item The $L^2$ projection of a function $f \in L^2(\Omega)$ is the function $u_h \in V_h$ that solves the variational problem
    \begin{equation}
    \label{eq: L^2_projection}
    (u_h,v_h)_{L^2(\Omega)} = (f,v_h)_{L^2(\Omega)} \qquad \forall v_h \in V_h.
    \end{equation}
    Similarly to \eqref{eq: discrete_weak_form_elliptic}, this problem is again equivalent to solving a linear system $M\bm{u}=\bm{f}$ involving this time a ``mass'' matrix, whose entries are given by
    \begin{equation*}
        M_{ij} = \int_{\Omega} \varphi_i \varphi_j \dx.
    \end{equation*}
    Analogous systems arise in explicit time integration of finite element discretizations of parabolic or hyperbolic PDEs \cite{quarteroni2009numerical,hughes2012finite,bathe2006finite}.  
    \item Conversely, implicit time integration typically requires solving a linear system $A\bm{x}=\bm{b}$, where $A=a_1M+a_2K$ is a linear combination of the mass and stiffness matrices with $a_1,a_2>0$. Oftentimes, $a_1=1$ while $a_2$ is related to the step size $\Delta t$ and a parameter $\alpha$ of the method. For instance, $a_2=\alpha \Delta t$ for parabolic PDEs, while $a_2=\alpha \Delta t^2$ for hyperbolic ones \cite{quarteroni2009numerical,hughes2012finite,bathe2006finite}.
\end{itemize}

In summary, all three cases lead to a linear system $A\bm{x}=\bm{b}$, where $A$ is the Gram matrix of a certain symmetric elliptic bilinear form $a$ in a basis $\Phi$; i.e., $A_{ij}=a(\varphi_i,\varphi_j)$. This bilinear form induces a norm $\|v_h\|_a = \sqrt{a(v_h,v_h)}$, commonly called the ``energy'' or operator norm (coinciding with the $L^2$ norm if $A=M$ and the $H^1$ seminorm if $A=K$). Distinct finite element methods originate from distinct approximation spaces $V_h$, with necessarily distinct bases $\Phi$ that ultimately affect the conditioning of $A$. Two common constructions are recalled below.

\begin{itemize}
    \item In classical finite element methods, the approximation space $V_h$ is a continuous space of piecewise polynomials constructed over a number of finite elements $T_i$ that discretize $\Omega$. In 1D, we form a mesh 
    \begin{equation*}
        \mathcal{T}_h = \{[\xi_{i-1},\xi_{i}] \colon i = 1,\dots,N_s\}
    \end{equation*}
    that partitions $\Omega=(a,b)$ into $N_s$ subintervals (or elements) $T_i=[\xi_{i-1},\xi_{i}]$ with
    \begin{equation*}
        a=\xi_0 < \xi_1 < \dots < \xi_{N_s-1} < \xi_{N_s} = b
    \end{equation*}
    and defines the approximation space of degree $p$ as
    \begin{equation*}
        V_h = \{v_h \in C^0(\Omega)\colon v_h|_{T} \in \mathbb{P}_p \ \forall T \in \mathcal{T}_h\}.
    \end{equation*}
    The Lagrange interpolating polynomials are a popular basis for $V_h$. They are constructed locally on each element and later coupled together across neighboring elements. Given a generic element $T_i=[\xi_{i-1},\xi_{i}]$ and a set $\{\hat{\xi}_j\}_{j=0}^p \subset T_i$ of distinct interpolation points (with $\hat{\xi}_0=\xi_{i-1}$ and $\hat{\xi}_p = \xi_{i}$), the Lagrange interpolating polynomials on $T_i$ are defined as
    \begin{equation}
    \label{eq: lagrange_polynomials}
        L_i(\xi) = \prod_{\substack{j=0 \\ j \neq i}}^p \frac{(\xi-\hat{\xi}_j)}{(\hat{\xi}_i - \hat{\xi}_j)} \qquad i=0,\dots,p.
    \end{equation}
    In higher dimensions, finite elements may take various shapes, but we will mostly focus on tensor product elements formed by quadrilaterals in 2D and hexahedra in 3D that simply repeat the same construction along separate directions. In dimension $d$, we obtain tensor product basis functions
    \begin{equation*}
        L_{\bm{i}}=L_{1,i_1} \dots L_{d,i_d}
    \end{equation*}
    where $L_{k,i}$ denotes the $i$th function along the $k$th direction and $\bm{i}=(i_1,\dots,i_d)$ is a multi-index. 

    \item In isogeometric analysis (IGA), the approximation space $V_h$ is a space of smooth piecewise polynomials (called splines). Splines are an established technology in computer-aided-design (CAD), and originally, the main reason for adopting them in finite element analysis (FEA) was to facilitate communication between FEA and CAD by using a single model for both. Later, the intrinsic smoothness of spline functions stood out as an important property, which has greatly contributed to the success of IGA. Among the profusion of spline functions, B-splines are certainly the most popular. In 1D, they are constructed recursively from a knot vector $\Xi = (\xi_1,\dots,\xi_{n+p+1})$ forming a sequence of non-decreasing real numbers. Analogously to classical FEM, distinct knots partition $\Omega$ into knot spans rather than elements. However, contrary to the Lagrange basis functions, B-splines may have increased smoothness between knot spans, an attribute directly controlled by the knot multiplicity. This property is recalled in the next theorem and will have important consequences in the analysis.

    \begin{theorem}[{\cite[Theorem 3.19]{lyche2018spline}}]
    \label{thm: spline_continuity}
    Suppose that the knot $\xi$ occurs $m$ times among the knots $\xi_i, \xi_{i+1}, \dots, \xi_{i+p+1}$, defining the B-spline $B_i$. Then, if $1 \leq m \leq p$,
    \begin{equation*}
        B_i \in C^{p-m}(\xi) \setminus C^{p+1-m}(\xi).
    \end{equation*}
    \end{theorem}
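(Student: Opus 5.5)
Since the B-spline recursion is not written out before the statement, I would fix the standard Cox--de Boor definition: $B_{i,0}$ is the indicator of $[\xi_i,\xi_{i+1})$, and
\begin{equation*}
B_{i,p}(x)=\frac{x-\xi_i}{\xi_{i+p}-\xi_i}B_{i,p-1}(x)+\frac{\xi_{i+p+1}-x}{\xi_{i+p+1}-\xi_{i+1}}B_{i+1,p-1}(x),
\end{equation*}
with $0/0:=0$. The core tool is the derivative formula
\begin{equation*}
B_{i,p}'=p\Bigl(\frac{B_{i,p-1}}{\xi_{i+p}-\xi_i}-\frac{B_{i+1,p-1}}{\xi_{i+p+1}-\xi_{i+1}}\Bigr),
\end{equation*}
valid at every point that is not a knot of $B_{i,p}$. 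I would prove it by induction on $p$, differentiating the recursion and using the recursion again for $B_{i,p-1}$ and $B_{i+1,p-1}$.

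The proof is then by induction on $p$, for all $1\le m\le p$ at once.

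\textbf{Base case $p=1$, $m=1$.} The hat function is continuous and its one-sided derivatives at a simple knot differ, because the nonzero slopes $1/(\xi_{i+1}-\xi_i)$ and $-1/(\xi_{i+2}-\xi_{i+1})$ meet there (or a slope meets $0$ at an end knot). Hence $B_i\in C^0(\xi)\setminus C^1(\xi)$.

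\textbf{Inductive step, $m<p$.} The knot $\xi$ appears with multiplicity at most $m$ in each of the two knot subsequences defining $B_{i,p-1}$ and $B_{i+1,p-1}$, and with multiplicity exactly $m$ in at least one of them. Where the multiplicity is exactly $m\le p-1$, the induction hypothesis gives $C^{p-1-m}$ and not $C^{p-m}$. If the multiplicity is smaller, the function is at least $C^{p-m}$, or it does not have $\xi$ as a knot and is smooth there. By the derivative formula, $B_{i,p}'$ is therefore $C^{p-1-m}$ at $\xi$. Continuity of $B_{i,p}$ itself I would get from the recursion, since the linear weights are continuous. So $B_{i,p}\in C^{p-m}(\xi)$.

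\textbf{Case $m=p$.} Here a degree-$(p-1)$ B-spline could contain $\xi$ with multiplicity $p$, which lies outside the hypothesis range. For this case I would argue directly: the recursion shows that $B_{i,p}$ is continuous whenever $m\le p$, because a factor vanishes at $\xi$ against any jump of a piecewise-constant-type term.

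\textbf{Main obstacle: sharpness.} The hard part is showing $B_{i,p}\notin C^{p+1-m}(\xi)$, that is, that $B_{i,p}^{(p+1-m)}$ genuinely jumps at $\xi$. Cancellation between the two terms of the derivative formula must be excluded. Rather than pushing the induction, I would switch to the divided-difference representation
\begin{equation*}
B_{i,p}(x)=(\xi_{i+p+1}-\xi_i)\,[\xi_i,\dots,\xi_{i+p+1}](\cdot-x)_+^p.
\end{equation*}
Expanding the divided difference with repeated nodes gives a linear combination of $\partial_t^k(t-x)_+^p$ at $t=\xi$, for $k=0,\dots,m-1$, plus terms at the other knots. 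The term with $k=m-1$ has nonzero coefficient, and it contributes $(\xi-x)_+^{p-m+1}$ times a constant. This term is exactly $C^{p-m}$ and its jump in the $(p-m+1)$-th derivative is nonzero, while all other contributions are smoother at $\xi$. Sharpness follows.

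Since this representation also immediately gives the $C^{p-m}$ part, I might present that version as the whole proof. The only remaining delicate point would be verifying the nonvanishing of the leading coefficient of the confluent divided difference, which follows from its explicit Hermite form.
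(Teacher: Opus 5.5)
The paper gives no proof of this statement; it quotes it from \cite{lyche2018spline}, so your argument has to stand on its own. The route you end with, the divided-difference (truncated-power) representation, is the classical proof and it is correct. Write $B_i(x)=(\xi_{i+p+1}-\xi_i)[\xi_i,\dots,\xi_{i+p+1}](\cdot-x)_+^p$ and expand the confluent divided difference in Hermite data. Near $\xi$:
\begin{itemize}
\item the terms at knots $\tau\neq\xi$ are polynomials in $x$;
\item the terms at $\xi$ with $k\le m-2$ are multiples of $(\xi-x)_+^{p-k}$, hence at least $C^{p-m+1}$;
\item the $k=m-1$ term is a nonzero multiple of $(\xi-x)_+^{p-m+1}$.
\end{itemize}
This gives $C^{p-m}(\xi)$ and the jump of the $(p-m+1)$-th derivative at once, for every $1\le m\le p$, so you should indeed present it as the whole proof.

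Make two points explicit. First, since you defined B-splines by the Cox--de Boor recursion, the divided-difference representation is a theorem you must cite. Its validity at $x=\xi$ itself uses $m\le p$, because $t\mapsto(t-\xi)_+^p$ must have the $m-1$ derivatives required at an $m$-fold node. Second, the nonvanishing of the leading coefficient has a one-line proof. Apply the divided difference to $f(t)=(t-\xi)^{m-1}\prod_{\tau\neq\xi}(t-\tau)^{m_\tau}$, where $m_\tau$ are the multiplicities of the other knots. This monic polynomial of degree $p+1$ has divided difference $1$, while all its Hermite data vanish except $f^{(m-1)}(\xi)$. So the coefficient equals $1/f^{(m-1)}(\xi)=\frac{1}{(m-1)!}\prod_{\tau\neq\xi}(\xi-\tau)^{-m_\tau}\neq 0$.

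Do not keep the induction sketch as it stands, because its $m=p$ step is wrong. Take the interior configuration $\xi_i<\xi_{i+1}=\dots=\xi_{i+p}=\xi<\xi_{i+p+1}$, which is exactly the $C^0$ interior knot of the Bernstein basis used throughout the paper. Both $B_{i,p-1}$ and $B_{i+1,p-1}$ have $\xi$ as a $p$-fold knot and jump there, yet both recursion weights equal $1$ at $x=\xi$, so nothing vanishes. Continuity comes from cancellation instead:
\begin{itemize}
\item $B_{i,p-1}=\bigl((x-\xi_i)/(\xi-\xi_i)\bigr)^{p-1}$ on $[\xi_i,\xi)$ drops from $1$ to $0$;
\item $B_{i+1,p-1}=\bigl((\xi_{i+p+1}-x)/(\xi_{i+p+1}-\xi)\bigr)^{p-1}$ on $[\xi,\xi_{i+p+1})$ rises from $0$ to $1$.
\end{itemize}
For $p=2$ and knots $0,1,1,2$ this gives $x^2$ and $(2-x)^2$. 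Your vanishing-weight argument only covers the two end configurations. The step $m<p$ at degree $p$ with $m=p-1$ relies on the multiplicity-equals-degree case at degree $p-1$, so this error would propagate through the whole induction. It can be repaired with the explicit formulas above, but the divided-difference proof makes the repair unnecessary.
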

    Thus, varying the multiplicity $1 \leq m \leq p$ of a knot allows constructing $C^k$ continuous spline spaces, where $k=p-m$. Distinct knot values partition $\Omega$ into knot spans (or subintervals) $T_i$ that form a mesh $\mathcal{T}_h$ and analogously to standard FEM we define
    \begin{equation*}
        V_h = \{v_h \in C^k(\Omega)\colon v_h|_{T} \in \mathbb{P}_p \ \forall T \in \mathcal{T}_h\}.
    \end{equation*}
    The standard finite element space is recovered for $k=0$ and the B-spline basis in this case reduces to the so-called Bernstein basis. In practice, open knot vectors are commonly adopted whereby the first and last knots are repeated $p+1$ times, thereby ensuring interpolatory basis functions at the boundaries. Quadratic Lagrange and B-spline bases are exemplarily shown in \Cref{fig: 1D_bases_p2_n4} for $N_s=4$ subdivisions. As usual, a tensor product argument allows generalizing the construction to multiple dimensions. For a more detailed introduction to splines and IGA, interested readers may consult \cite{lyche2018spline,kunoth2018splines,cottrell2009isogeometric} among many other references.
\end{itemize}

\begin{figure}[H]
     \centering
     \begin{subfigure}[t]{0.31\textwidth}
    \centering
    \includegraphics[width=\textwidth]{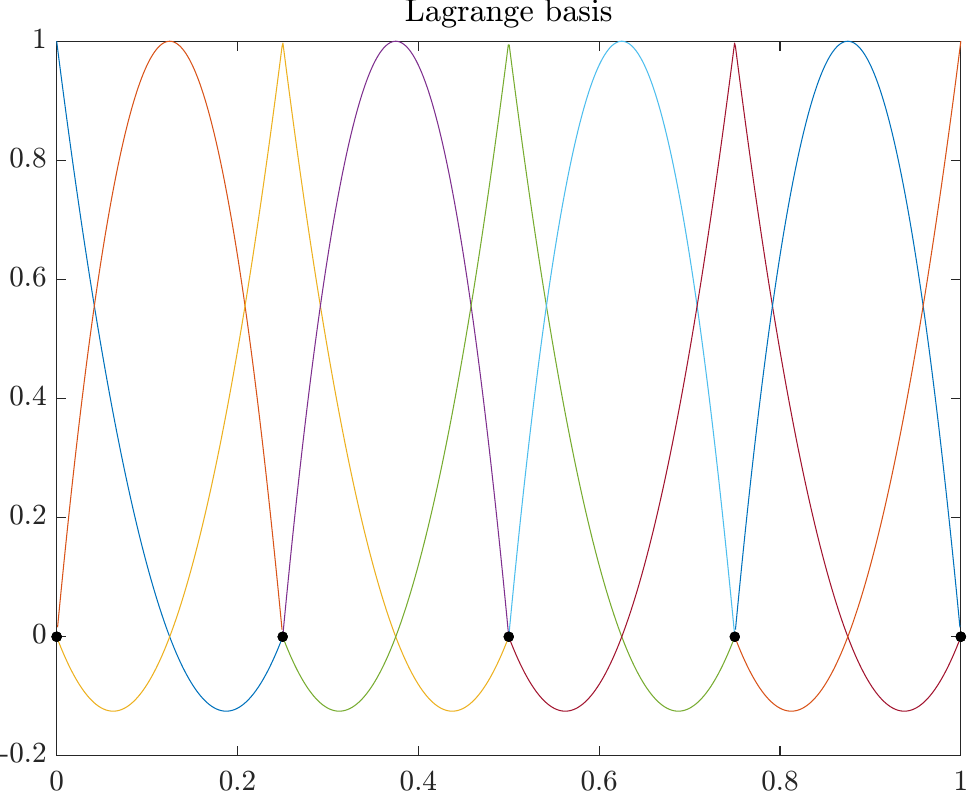}
    \caption{Lagrange basis}
    \label{fig: 1D_Lagrange_basis_p2_C0_n4}
     \end{subfigure}
     \hfill
    \begin{subfigure}[t]{0.31\textwidth}
    \centering
    \includegraphics[width=\textwidth]{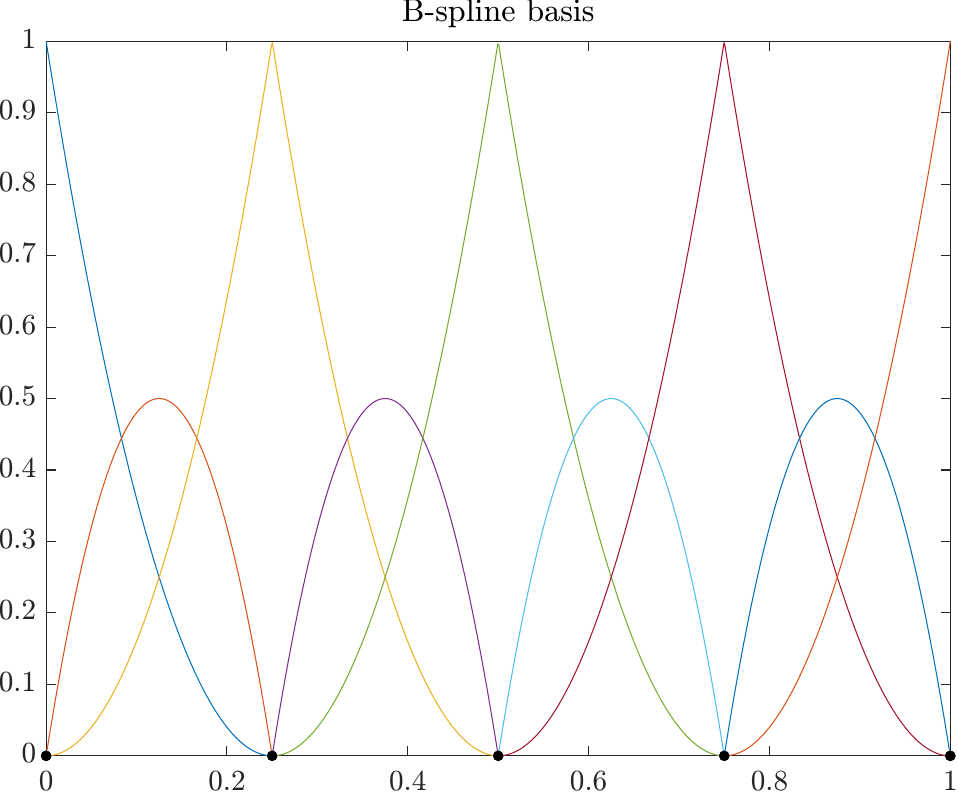}
    \caption{$C^0$ B-spline basis}
    \label{fig: 1D_Bspline_basis_p2_C0_n4}
     \end{subfigure}
     \hfill
    \begin{subfigure}[t]{0.31\textwidth}
    \centering
    \includegraphics[width=\textwidth]{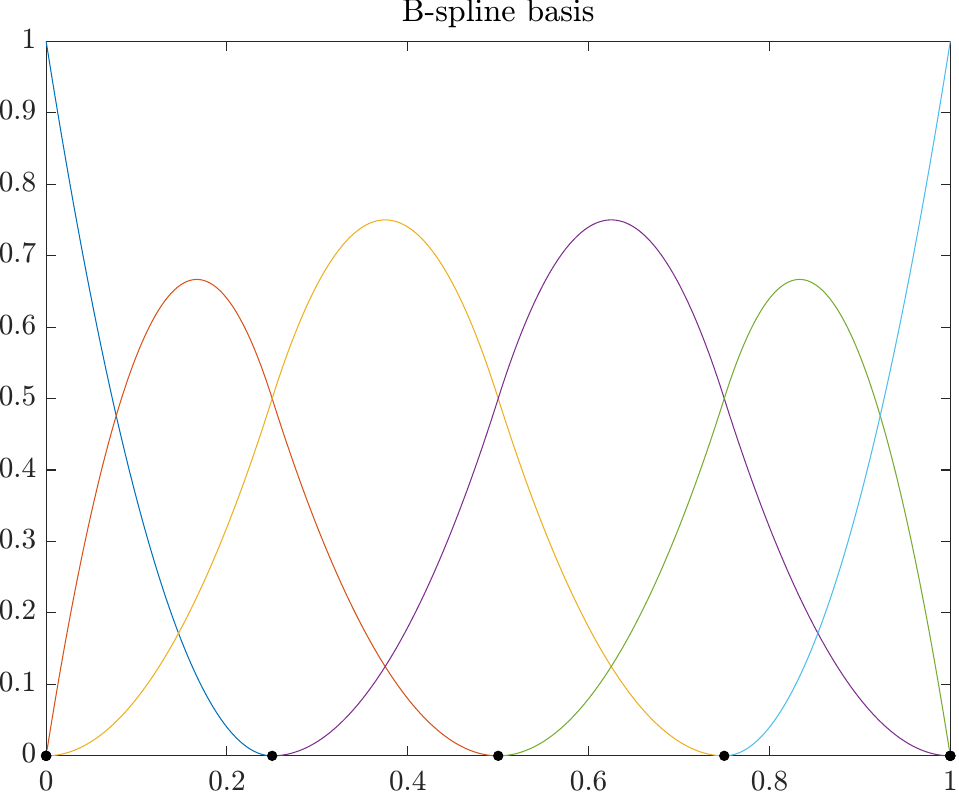}
    \caption{$C^1$ B-spline basis}
    \label{fig: 1D_Bspline_basis_p2_Cp-1_n4}
     \end{subfigure}
     \caption{Quadratic Lagrange and B-spline bases for $N_s=4$ subdivisions}
    \label{fig: 1D_bases_p2_n4}
\end{figure}

Contrary to standard finite element discretizations, spline functions may exactly represent complex curved shapes, including conic sections, which explains their widespread usage in CAD. This same property sometimes allows creating boundary-fitted meshes in IGA without committing any geometric discretization error and was originally one of the method's main assets. However, in most engineering applications, the geometry is often too complex to be represented exactly, and its boundary is instead implicitly defined through intersections of spline curves or surfaces that carve out regions from a simpler geometry. This process, called trimming, is routinely used in CAD but is not without causing any trouble in analysis. Indeed, trimmed geometries are rarely watertight, and due to their complexity, creating boundary-fitted meshes is often infeasible or excessively time-consuming, whether with standard finite elements or with IGA. Immersed finite element methods may then emerge as an attractive alternative. Loosely speaking, immersed methods embed a complex physical domain $\Omega$ in a much simpler fictitious domain $\widehat{\Omega}$ that is easily meshed. The basis functions are then constructed over the background mesh before retaining those whose support intersects the physical domain. The concept is plainly illustrated in \Cref{fig: ring_mesh}, where a quarter of an annulus is discretized with a boundary-fitted mesh in \Cref{fig: 2D_Laplace_ring_boundary_fitted_mesh} and with an unfitted mesh in \Cref{fig: 2D_Laplace_ring_unfitted_mesh}.

\begin{figure}[H]
    \centering
    \begin{subfigure}{0.49\textwidth}
    \centering
    \includegraphics[width=0.9\textwidth]{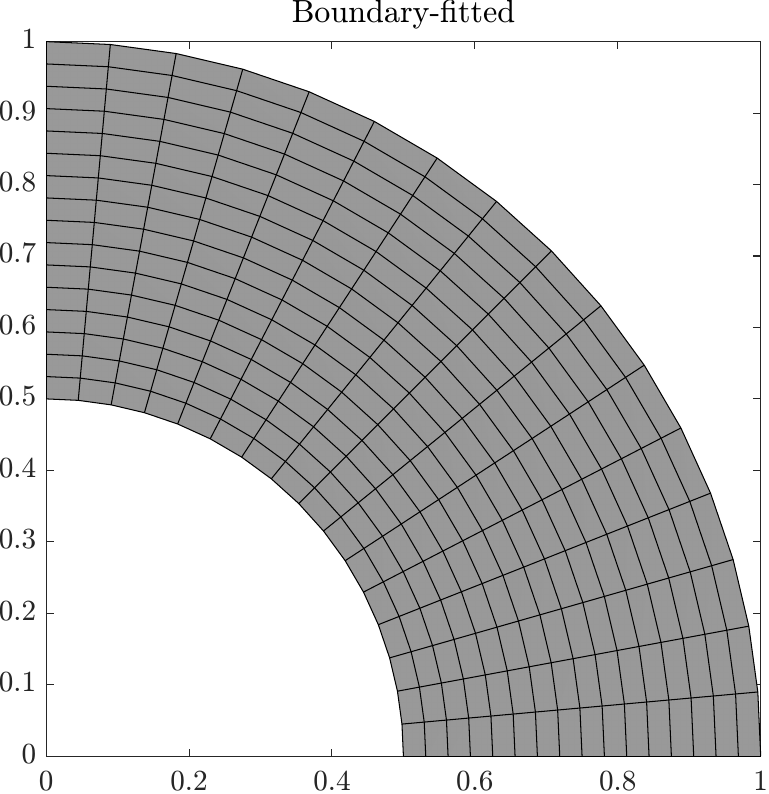}
    \caption{Boundary-fitted mesh}
    \label{fig: 2D_Laplace_ring_boundary_fitted_mesh}
    \end{subfigure}
    \hfill
    \begin{subfigure}{0.49\textwidth}
    \centering
    \includegraphics[width=0.9\textwidth]{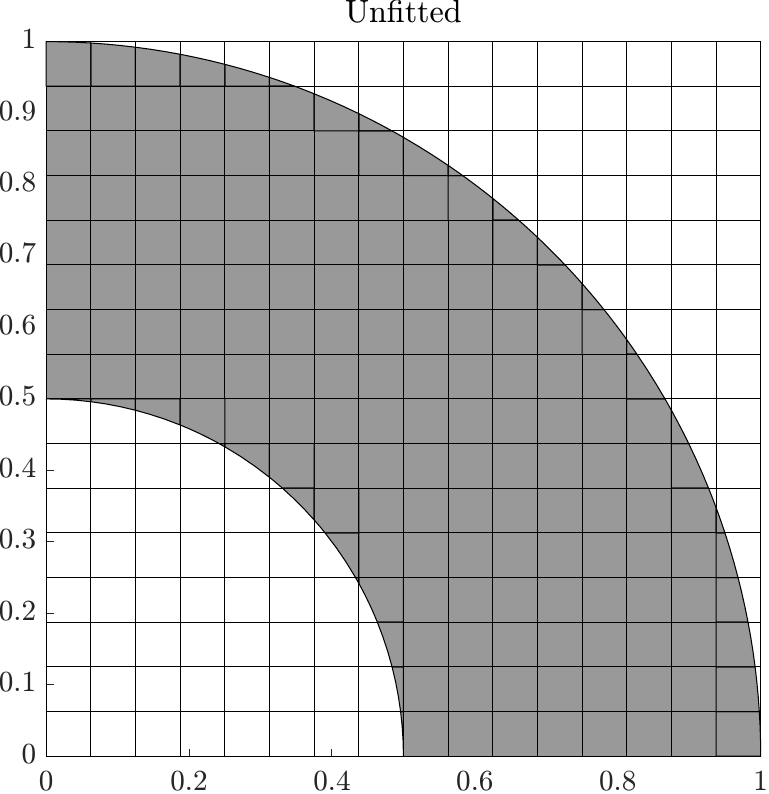}
    \caption{Unfitted mesh}
    \label{fig: 2D_Laplace_ring_unfitted_mesh}
    \end{subfigure}
    \caption{Quarter of annulus}
    \label{fig: ring_mesh}
\end{figure}
At a first glance, immersed methods are a remarkably simple workaround to an otherwise dreadful meshing problem. However, while it certainly alleviates the meshing issue, it instead introduces serious difficulties elsewhere:

\begin{enumerate}[noitemsep]
    \item Firstly, the integration on cut elements requires special integration rules or techniques. Some of the solutions proposed include quadtree/octree subdivisions \cite{verhoosel2015image,duster2008finite} or reparameterizations of trimmed elements \cite{antolin2019isogeometric}.
    \item Secondly, strongly imposing essential boundary conditions on trimmed boundaries becomes impossible, and they must either be weakly imposed via Nitsche's method \cite{de2018note,burman2012fictitious}, penalization techniques \cite{stoter2023critical} or enforced through Lagrange multipliers \cite{burman2010fictitious}. However, none of these methods are perfect, as they either introduce unknown parameters or require identifying suitable multiplier spaces.
    \item Thirdly, immersed methods may compromise the numerical stability of the problem by causing continuity constants to blow up \cite{buffa2020minimal} or stable step sizes to nearly vanish \cite{stoter2023critical,bioli2025theoretical,hollweck2026analysis}. Extended B-splines \cite{hollig2005introduction,marussig2017stable} and other polynomial extension techniques \cite{buffa2020minimal,burman2023extension} that (locally) modify the approximation space are a natural remedy for this problem.
    \item Finally, the existence of arbitrarily small cut elements may cause arbitrarily ill-conditioned system matrices \cite{de2017condition,de2019preconditioning,de2023stability}.
\end{enumerate}

This last issue has already been carefully examined in several contributions and may be alleviated in multiple ways. The most prominent ones include fictitious domain stabilization (or $\alpha$-stabilization) \cite{parvizian2007finite,schillinger2015finite}, eigenvalue stabilization \cite{garhuom2022eigenvalue,eisentrager2024eigenvalue}, polynomial extension techniques \cite{buffa2020minimal,hollig2005introduction,marussig2017stable}, preconditioning \cite{de2017condition,de2019preconditioning,de2023stability} and specific combinations \cite{eisentrager2024eigenvalue}. Although these techniques share common goals, they target different quantities. Fictitious domain stabilization modifies the variational formulation (and the problem that is ultimately solved) by integrating over the fictitious domain and weighting it by a small parameter $\alpha > 0$. Eigenvalue stabilization preserves the original formulation but instead alters the system matrices by locally stabilizing near zero eigenvalues. In contrast, polynomial extension techniques (including extended B-splines) modify the approximation space by extending polynomial segments into badly trimmed elements. Finally, preconditioning chooses a different basis for the original approximation space. Therefore, it certainly cannot treat the aforementioned instabilities that are rooted in the approximation space. Nevertheless, if the only concern is the ill-conditioning, there are good reasons to opt for preconditioning, primarily from the implementation point of view. Indeed, preconditioning is merely an add-on to an iterative solver and does not meddle with the finite element discretization. Therefore, it is certainly one of the least intrusive methods and may be easily incorporated in existing finite element software. This article exclusively focuses on the ill-conditioning issue. The next section recalls its origin before examining existing preconditioning techniques, highlighting their limitations, and later proposing a novel strategy.

\section{Sources of ill-conditioning}
\label{se: ill_conditioning_origin}
In FEA, the conditioning of a Gram matrix is directly related to the stability of the basis $\Phi$ \cite{lyche2018spline}. Since $V_h$ is finite-dimensional, there exist two constants $c_1,c_2>0$ such that
\begin{equation}
\label{eq: basis_stability}
c_1 \|\bm{v}\|_2^2 \leq \|v_h\|_a^2 \leq c_2 \|\bm{v}\|_2^2 \qquad \forall v_h \in V_h
\end{equation}
where $\bm{v}$ is the coefficient vector of $v_h$ in the basis $\Phi$. Since $\|v_h\|_a^2=\bm{v}^TA\bm{v}$, the constants $c_1$ and $c_2$ that measure the strength of the equivalence between the energy and Euclidean norms also directly yield an estimate of the condition number. Herein, the condition number of an SPD matrix $A \in \mathbb{R}^{n \times n}$ (or similar to an SPD matrix) is defined as
\begin{equation*}
    \kappa(A) = \frac{\lambda_n(A)}{\lambda_1(A)}
\end{equation*}
where $\lambda_1(A)$ and $\lambda_n(A)$ denote the smallest and largest eigenvalues of $A$, respectively. The condition number $\kappa(A)$ is an important quantity in numerical analysis since it directly enters the convergence bounds of classical iterative solvers, such as the conjugate gradient method (see e.g., \cite{greenbaum1997iterative,saad2003iterative}). From the variational characterization of eigenvalues \cite[Theorem 4.2.6]{horn2012matrix} and \eqref{eq: basis_stability}, we immediately obtain
\begin{equation*}
    \kappa(A) \leq \frac{c_2}{c_1}.
\end{equation*}
Thus, $A$ is well-conditioned if $c_2/c_1$ remains moderate. For immersed finite element discretizations, the constant $c_2$ is clearly bounded independently of the trimming configuration. Indeed, by appending a subscript to the energy norm for marking the dependency on the integration domain, we immediately obtain
\begin{equation*}
    \|v_h\|_{a(\Omega)}^2 \leq \|v_h\|_{a(\widehat{\Omega})}^2 \leq c_2 \|\bm{v}\|_2^2
\end{equation*}
where $c_2$ is the stability constant of the spline basis constructed over $\widehat{\Omega}$. Thus, $c_2$ remains bounded from above independently of the trimming configuration\footnote{Clearly, $c_2$ is also bounded from below by choosing a specific function in \eqref{eq: basis_stability} nowhere near the trimmed boundary.}. However, this may not hold if the bilinear form accounts for additional terms resulting from the weak imposition of Dirichlet boundary conditions with Nitsche-type methods \cite{de2017condition}. Indeed, global penalization parameters may indirectly introduce a dependency on the trimming configuration. Thankfully, this issue does not occur when the penalization parameters are instead chosen locally \cite{de2017condition}. Therefore, the weak imposition of boundary conditions is not the main cause of ill-conditioning, and we ignore it altogether in the remaining part of this work. Instead, our greatest concern is tied to the constant $c_1$ since nothing guarantees that it remains bounded from below. As a matter of fact, there might exist a function $v_h \in V_h$ such that $\|v_h\|_a \ll \|\bm{v}\|_2$, in which case $c_1$ is necessarily very small and $A$ is most likely ill-conditioned. In immersed finite element methods, this problem is caused by basis functions that are only supported on small trimmed elements \cite{de2023stability}. In \cite{de2017condition}, the authors identified two distinct mechanisms causing ill-conditioning. The first one is attributed to a poor scaling of the basis functions, with energy norm values spanning several orders of magnitude. A simple 1D setup suffices to illustrate the problem. Consider a uniform partition of the unit line $\widehat{\Omega}=(0,1)$ and let $\Omega=(0,\xi_l+\delta)$ denote the physical domain whose right boundary is at a distance $\delta > 0$ from the breakpoint $\xi_l$, as depicted in \Cref{fig: 1D_illustration}. The basis over the physical domain $\Omega$ is a subset of the basis functions over the fictitious domain $\widehat{\Omega}$. Those basis functions are exemplarily shown in \Cref{fig: 1D_Bspline_basis_p2_C1,fig: 1D_Bspline_basis_p2_C0} for quadratic B-spline bases of $C^1$ and $C^0$ continuities, respectively, constructed over a mesh of $N_s=8$ elements. In both cases, $\xi_l=0.75$ is the last knot within the shaded physical domain $\Omega$. In the $C^1$ case, one of the basis functions becomes inactive over $\Omega$ and is marked in gray in \Cref{fig: 1D_Bspline_basis_p2_C1}. All other functions remain active, but one of them is only supported on the small trimmed element $[\xi_l, \xi_l+\delta)$. Therefore, its energy norm may become arbitrarily small as $\delta \to 0$ while the Euclidean norm of its associated coefficient vector remains identically $1$. Consequently, $\lambda_1(A)$ may become \emph{arbitrarily} small and sends the condition number skyrocketing. The same issue obviously arises for Lagrange basis functions. The growth of the condition number was estimated by de Prenter et al.\ \cite{de2017condition} for regular trimmed elements (also in higher dimensions), independently of the finite element basis. For the stiffness matrix of second-order PDEs, the authors found that
\begin{equation}
\label{eq: cond_K}
    \kappa(K) \gtrsim \eta^{-(2p+1-2/d)},
\end{equation}
where $\eta = \min_{T \in \mathcal{T}_h} |T \cap \Omega|/|T|$ denotes the smallest volume fraction. For our 1D example above, $\eta = \delta/h$. With similar arguments (and identical assumptions), one can easily prove that for the mass matrix
\begin{equation}
\label{eq: cond_M}
    \kappa(M) \gtrsim \eta^{-(2p+1)}.
\end{equation}
For spline bases, those rates are quickly recovered by simply evaluating the energy norm of the smoothest basis function near the trimmed boundary \cite{ballotta2023preconditioners,de2023stability}. They are confirmed in \Cref{fig: 1D_Laplace_trimming_cond_no_prec_Bspline_C0} and were systematically observed in all our examples. As shown in \Cref{fig: 1D_Laplace_trimming_cond_no_prec_Bspline_C0}, the condition number rapidly reaches prohibitively large values and requires effective preconditioning. In the maximally smooth case, rescaling the basis functions by their energy norm is sometimes a simple and effective solution, as we will later prove. Unfortunately, the solution is far less obvious in case multiple basis functions are supported on the same trimmed element, as for the $C^0$ case in \Cref{fig: 1D_Bspline_basis_p2_C0}. Indeed, even after rescaling, there could potentially still exist a linear combination of basis functions that nearly vanishes on the trimmed element. This was the second mechanism identified in \cite{de2017condition}. However, this intuition hides many subtleties, and to the best of our knowledge, the effectiveness of rescaling was never satisfactorily explained in the literature. The next section is meant to fill the gaps. We begin by carefully analyzing the condition number after rescaling (or Jacobi preconditioning) before examining more advanced preconditioning techniques. Note that in the finite element context, preconditioning seeks an alternative basis that strengthens the norm equivalence in \eqref{eq: basis_stability}.

\begin{figure}[H]
    \centering
    \includegraphics[width=0.5\textwidth]{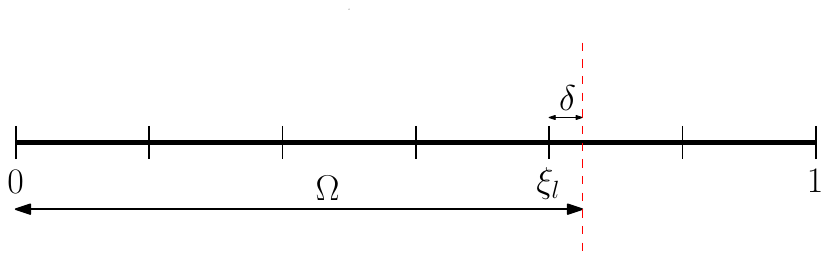}
    \caption{Trimmed 1D line}
    \label{fig: 1D_illustration}
\end{figure}

\begin{figure}[H]
    \centering
    \begin{subfigure}{0.48\textwidth}
    \centering
    \includegraphics[width=\textwidth]{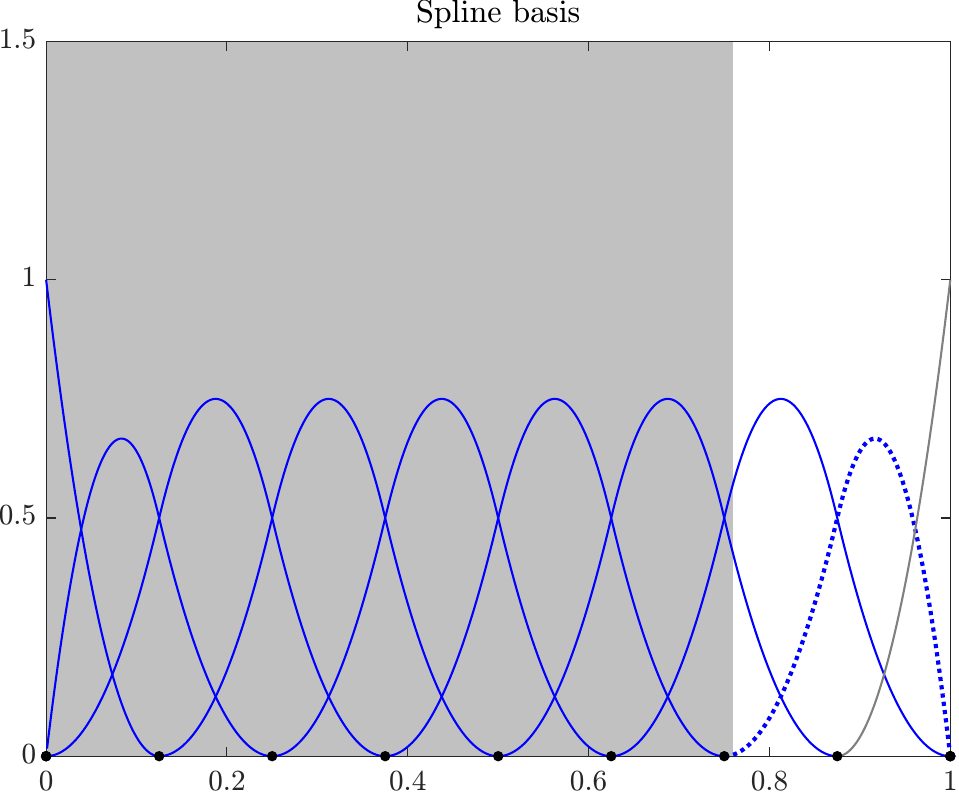}
    \caption{$C^1$ smoothness}
    \label{fig: 1D_Bspline_basis_p2_C1}
    \end{subfigure}
    \hfill
    \begin{subfigure}{0.48\textwidth}
    \centering
    \includegraphics[width=\textwidth]{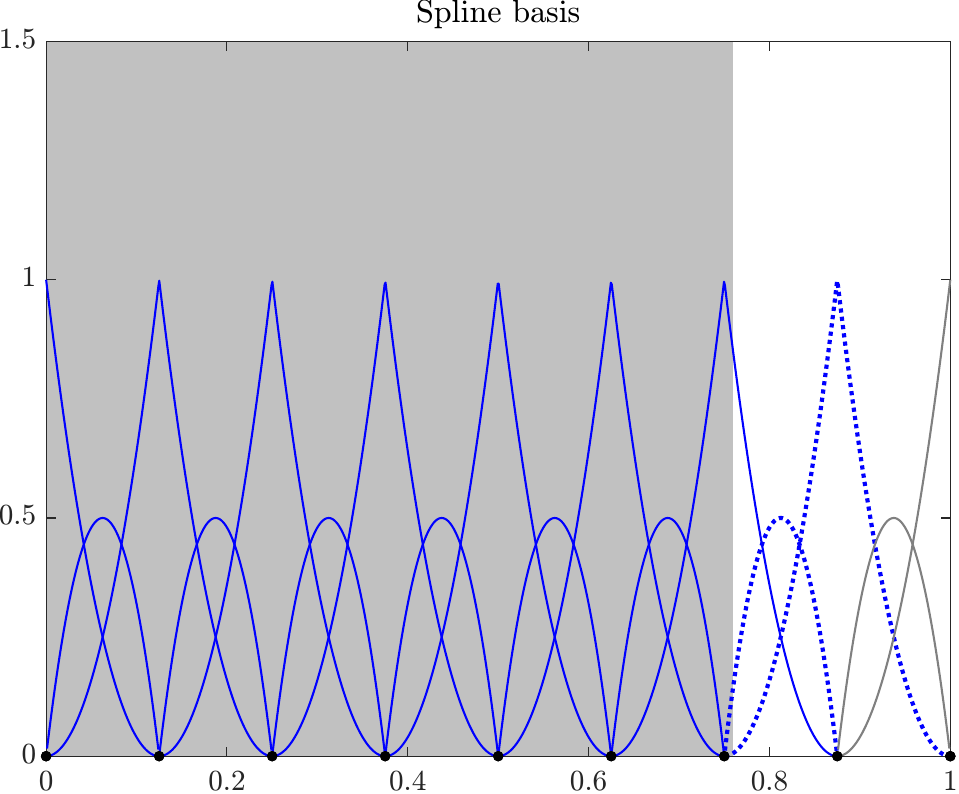}
    \caption{$C^0$ smoothness}
    \label{fig: 1D_Bspline_basis_p2_C0}
    \end{subfigure}
    \caption{Quadratic spline bases on the fictitious domain $(0,1)$. Active basis functions are shown in blue, inactive ones in gray. Among active basis functions, some are supported on non-trimmed elements (solid blue), while others are only supported on small trimmed elements (dashed blue).}
    \label{fig: 1D_Bspline_basis_p2}
\end{figure}

\begin{figure}[H]
    \centering
    \begin{subfigure}{0.48\textwidth}
    \centering
    \includegraphics[width=\textwidth]{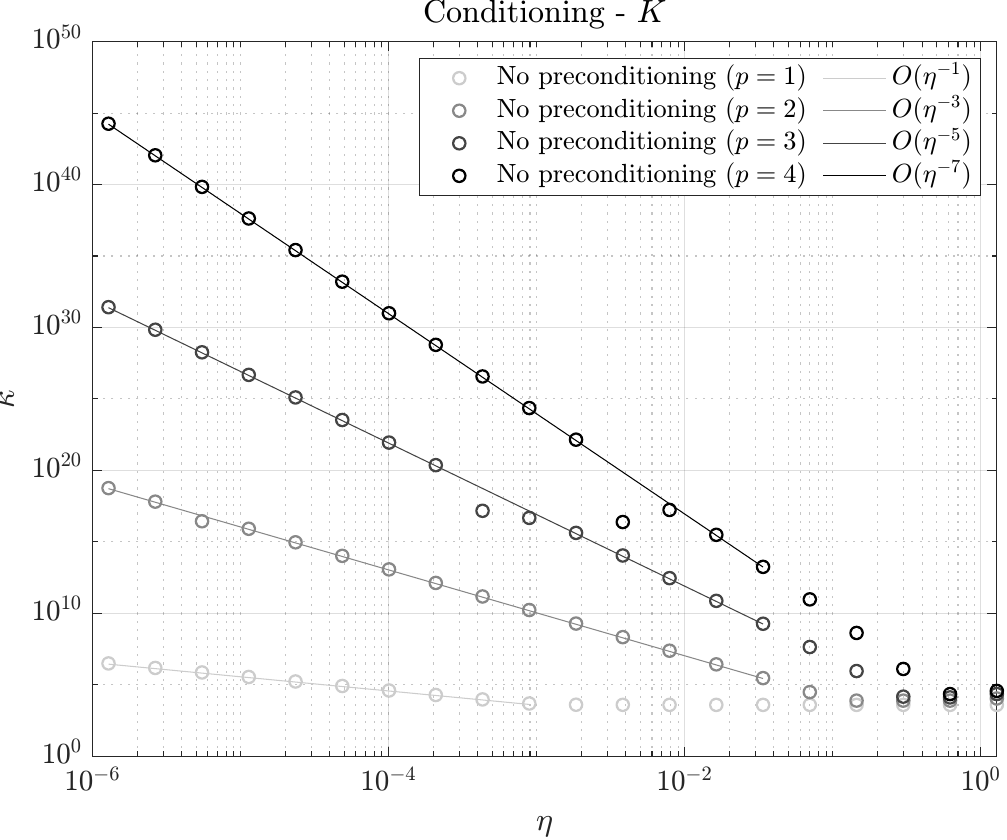}
    \caption{$\kappa(K)$}
    \label{fig: 11D_Laplace_trimming_cond_K_no_prec_Bspline_C0}
    \end{subfigure}
    \hfill
    \begin{subfigure}{0.48\textwidth}
    \centering
    \includegraphics[width=\textwidth]{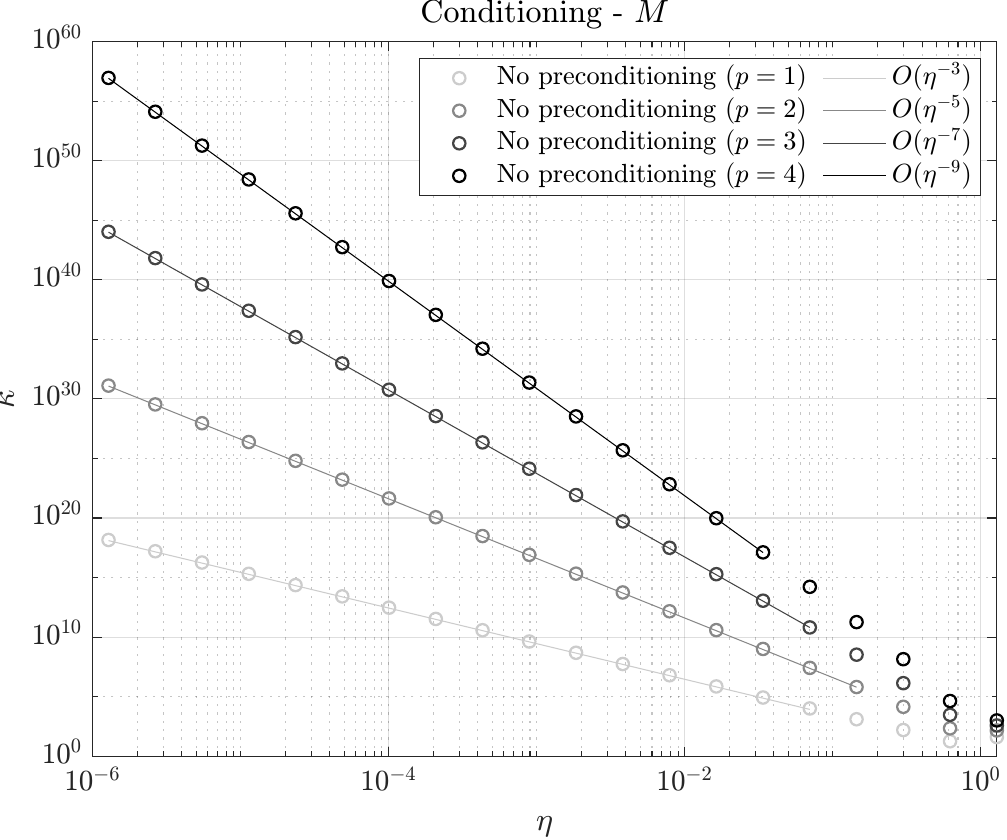}
    \caption{$\kappa(M)$}
    \label{fig: 1D_Laplace_trimming_cond_M_no_prec_Bspline_C0}
    \end{subfigure}
    \caption{Condition numbers of the stiffness and mass matrices in the Bernstein basis for the setup in \Cref{fig: 1D_illustration} with $128$ subdivisions and spline degrees ranging from $1$ to $4$.}
    \label{fig: 1D_Laplace_trimming_cond_no_prec_Bspline_C0}
\end{figure}

\section{Existing preconditioning strategies}
\label{se: preconditioning}
\subsection{Diagonal scaling}
\label{se: diagonal_scaling}
The diagonal scaling (or Jacobi) preconditioner is the most straightforward preconditioning strategy and simply amounts to forming the Gram matrix of the bilinear form $a$ in the rescaled basis
\begin{equation*}
    \hat{\Phi} = \{\hat{\varphi}_1,\dots,\hat{\varphi}_n\},
\end{equation*}
where $\hat{\varphi}_j=\varphi_j/\|\varphi_j\|_a$. For the variational problems of \Cref{se: model_problems}, the Jacobi preconditioned system becomes $\hat{A}\hat{\bm{x}}=\hat{\bm{b}}$, where
\begin{equation*}
    \hat{A}_{ij} = a(\hat{\varphi}_i,\hat{\varphi}_j), \quad \hat{b}_i = F(\hat{\varphi}_i),
\end{equation*}
and $\hat{\bm{x}}$ is the coefficient vector of the solution in the rescaled basis. However, in order to accommodate arbitrary right-hand sides, we will adopt a more algebraic viewpoint, where
\begin{equation*}
    \hat{A}=D^{-1}AD^{-1}, \quad \hat{\bm{x}}=D\bm{x}, \quad \text{and} \quad \hat{\bm{b}}=D^{-1}\bm{b},
\end{equation*}
where $D=\sqrt{\diag(A)}$ denotes the Jacobi preconditioner. By ensuring that all basis functions have unit operator norm, the Jacobi preconditioner effectively eliminates the first source of ill-conditioning and is (nearly) the best diagonal preconditioner one can hope for \cite{van1969condition}. However, near linear dependencies among rescaled basis functions might survive, and this strategy alone is generally insufficient \cite{de2017condition}. In \cite{de2017condition,de2023stability}, the authors argued that diagonal scaling was often sufficient for maximally smooth spline discretizations when the first source of ill-conditioning usually prevails. But when multiple basis functions are supported on small trimmed elements, near linear dependencies might arise and the Jacobi preconditioner may no longer be sufficient, although it still significantly reduces the condition number. As far as we can tell, the arguments laid out in \cite{de2023stability} were merely intuitive, and the conditions under which diagonal scaling is robust are far more subtle than engineers like to believe. While continuity is certainly one of the main drivers, it does not fully explain the behavior of the eigenvalues of the Jacobi preconditioned matrix. In particular, our analysis reveals how the choice of basis, the cut configuration (in higher dimensions), and even the location of the knots or interpolation points may also critically influence the eigenvalues of the preconditioned matrix. The next two sections provide detailed analyses in 1D and 2D, where various trimming configurations are considered for the latter.

\subsubsection{1D analysis}
\label{se: 1D_analysis}
The trimmed 1D line shown in \Cref{fig: 1D_illustration} is a simple, albeit instructive, example to begin with. The physical domain $\Omega=(0,\xi_l+\delta) \subset \widehat{\Omega}$ is obtained by trimming the right boundary of the unit line $\widehat{\Omega}=(0,1)$ for some $\delta>0$. This example is obviously contrived since trimming operations are completely unnecessary in 1D. Nevertheless, this preliminary analysis is not without a few surprises and will prove useful in more complicated future settings.

As explained in \Cref{se: ill_conditioning_origin}, ill-conditioning stems from basis functions that are only supported on small trimmed elements. At this stage, it is already important to distinguish the classical Lagrange basis from (smooth) spline bases. Those two cases are treated separately in the next two sections to highlight important differences. \\

\noindent \textbf{Lagrange basis}:
For the Lagrange basis of degree $p$, there exist $p$ basis functions $\{L_1,\dots,L_p\}$ that are only supported on $\Omega_l = [\xi_l,\xi_l+\delta)$. Moreover, since they are only $C^0$ at $\xi_l$ (and not $C^1$), the Taylor form of the Lagrange polynomials restricted to $\Omega_l$ is
\begin{equation*}
    \restr{L_i}{\Omega_l}(\xi) = \sum_{j=1}^p c_{ij}(\xi-\xi_l)^j \qquad i=1,\dots,p
\end{equation*}
with $c_{i1} \neq 0$ for all $i=1,\dots,p$. Consequently, we deduce that
\begin{align}
    \|L_i\|_{L^2}^2 &= \int_{\Omega_l} (L_i(\xi))^2 \dxi \sim \delta^{3} & &\text{as } \delta \to 0, \label{eq: L2_Lagrange} \\
    \|L_i'\|_{L^2}^2 &= \int_{\Omega_l} (L_i'(\xi))^2 \dxi \sim \delta & &\text{as } \delta \to 0, \label{eq: H1_Lagrange}
\end{align}
independently of $i$. Now consider the polynomial $u$ defined as
\begin{equation*}
    u(\xi) =
    \begin{cases}
        (\xi-\xi_l)^p & \text{if } \xi \geq \xi_l, \\
        0 & \text{otherwise}.
    \end{cases}
\end{equation*}
We first show that $\restr{u}{\Omega}$ (the restriction of $u$ to $\Omega$) is a function in $V_h$. Indeed, since $\restr{u}{\Omega_l}$ is a degree $p$ polynomial, there exist coefficients $u_j$ independent of $\delta$ such that
\begin{equation*}
    \restr{u}{\Omega_l}(\xi) = (\xi-\xi_l)^p = \sum_{j=0}^p u_j L_j(\xi).
\end{equation*}
Denoting $\{\hat{\xi}_j\}_{j=0}^p$ the local Lagrange interpolation points (with $\hat{\xi}_0=\xi_l$ and $\hat{\xi}_p=\xi_{l+1}$), the coefficients $u_j$ are simply $u_j = u(\hat{\xi}_j)$. Consequently, $u_0=0$ and $\restr{u}{\Omega_l}$ is merely a linear combination of $\{L_1,\dots,L_p\}$ and, therefore, $\restr{u}{\Omega} \in V_h$. In the rescaled basis, this function becomes
\begin{equation*}
    \restr{u}{\Omega_l}(\xi) = \sum_{j=1}^p \hat{u}_j \hat{L}_j(\xi),
\end{equation*}
where $\hat{u}_j = \alpha_ju_j$ with $\alpha_j=\|L_j\|_a$. Denoting $\hat{\bm{u}}$ the coefficient vector of $\restr{u}{\Omega}$ in the rescaled Lagrange basis, we obtain
\begin{equation}
\label{eq: upper_bound_lambda_min}
    \lambda_1(\hat{A}) = \min_{\substack{\hat{\bm{v}} \in \mathbb{R}^n \\ \hat{\bm{v}} \neq 0}} \frac{\hat{\bm{v}}^T\hat{A}\hat{\bm{v}}}{\|\hat{\bm{v}}\|_2^2} \leq \frac{\hat{\bm{u}}^T\hat{A}\hat{\bm{u}}}{\|\hat{\bm{u}}\|_2^2} = \frac{\|u\|_a^2}{\|\hat{\bm{u}}\|_2^2}.
\end{equation}
The scaling of the numerator and denominator depends on the energy norm $a$. For the numerator,
\begin{align*}
    \|u\|_{L^2}^2 &= \int_{\Omega_l} (\xi-\xi_l)^{2p} \dxi \sim \delta^{2p+1} & &\text{as } \delta \to 0, \\
    \|u'\|_{L^2}^2 &= \int_{\Omega_l} p^2(\xi-\xi_l)^{2(p-1)} \dxi \sim \delta^{2p-1} & &\text{as } \delta \to 0,
\end{align*}
while for the denominator, $\|\hat{\bm{u}}\|_2^2 = \sum_{j=1}^p u_j^2 \|L_j\|_a^2$. Combining those results with \cref{eq: L2_Lagrange,eq: H1_Lagrange} and recalling that the coefficients $u_j$ are independent of $\delta$, we finally obtain
\begin{equation*}
    \lambda_1(\hat{M}) \lesssim \frac{\delta^{2p+1}}{\delta^3} = \delta^{2(p-1)} \quad \text{and} \quad \lambda_1(\hat{K}) \lesssim \frac{\delta^{2p-1}}{\delta} = \delta^{2(p-1)}.
\end{equation*}
Since the largest eigenvalue is independent of $\delta$, we obtain in either case
\begin{equation*}
    \kappa(\hat{A}) \gtrsim \delta^{-2(p-1)}.
\end{equation*}
Moreover, since the operator norm of each basis function on $\Omega_l$ scales identically with respect to $\delta$ (see \cref{eq: L2_Lagrange,eq: H1_Lagrange}), and the denominator in \eqref{eq: upper_bound_lambda_min} does not allow for potential cancellation, it will always scale the same, regardless of the coefficients $u_j$ (as long as they are independent of $\delta$). Furthermore, since the numerator in \eqref{eq: upper_bound_lambda_min} is as small as it gets, we can even state that $\kappa(\hat{A}) \sim \delta^{-2(p-1)}$. Thus, in 1D, the condition number of the Jacobi preconditioned mass and stiffness matrices scales (at least) as $\delta^{-2(p-1)} \sim \eta^{-2(p-1)}$ in the Lagrange basis. \\

\noindent \textbf{B-spline basis}:
We now turn to (smooth) spline bases. In this case, the number of basis functions only supported on $\Omega_l$ depends of the local continuity across the rightmost knot $\xi_l$ in \Cref{fig: 1D_illustration}. More specifically, if $\xi_l$ occurs $m$ times in the knot vector (with $1 \leq m \leq p$), \Cref{thm: spline_continuity} states that there exist $m$ basis functions emerging from $\xi_l$ with various degrees of continuity ranging from $C^{p-m}$ to $C^{p-1}$. Denoting $k=p-m$ the local smoothness across $\xi_l$, the Taylor form of the B-splines (restricted to $\Omega_l$) is
\begin{equation*}
    \restr{B_i}{\Omega_l}(\xi) = \sum_{j=i}^p c_{ij}(\xi-\xi_l)^j \qquad i=k+1,\dots,p
\end{equation*}
with $c_{ii} \neq 0$ for all $i=k+1,\dots,p$. Therefore,
\begin{align}
    (B_i,B_j)_{L^2} &= \int_{\Omega_l} B_i(\xi)B_j(\xi) \dxi \sim \delta^{i+j+1} & &\text{as } \delta \to 0, \label{eq: L2_Bspline} \\
    (B_i',B_j')_{L^2} &= \int_{\Omega_l} B_i'(\xi)B_j'(\xi) \dxi \sim \delta^{i+j-1} & &\text{as } \delta \to 0. \label{eq: H1_Bspline}
\end{align}
This time, the operator norm of the basis functions scales differently, depending on $i$. Now, given arbitrary coefficients $u_j$ (independent of $\delta$), consider a function $u$ defined as
\begin{equation*}
    u(\xi) =
    \begin{cases}
        \sum_{k< j \leq p} u_j B_j(\xi). & \text{if } \xi \geq \xi_l, \\
        0 & \text{otherwise},
    \end{cases}
\end{equation*}
and note that $\restr{u}{\Omega_l}$ is completely generic. Similarly to the Lagrange basis, in the rescaled B-spline basis, this function becomes
\begin{equation*}
    \restr{u}{\Omega_l}(\xi) = \sum_{k< j \leq p} \hat{u}_j \hat{B}_j(\xi),
\end{equation*}
where $\hat{u}_j = \alpha_ju_j$ with $\alpha_j=\|B_j\|_a$. Denoting $\hat{\bm{u}}$ the coefficient vector of $\restr{u}{\Omega}$ in the rescaled B-spline basis, and denoting $r=\min \{i \colon u_i \neq 0\} \geq 1$, the numerator of the Rayleigh quotient in \eqref{eq: upper_bound_lambda_min} becomes
\begin{equation*}
    \|u\|_a^2 = \sum_{r \leq i,j \leq p} u_iu_j a(B_i,B_j)
\end{equation*}
and from the scaling relations in \cref{eq: L2_Bspline,eq: H1_Bspline}, we deduce that
\begin{align*}
    \lambda_1(\hat{M}) &\sim \frac{\sum_{r \leq i,j \leq p} \delta^{i+j+1}}{\sum_{r \leq i \leq p} \delta^{2i+1}} \to 1 & &\text{as } \delta \to 0, \\
    \lambda_1(\hat{K}) &\sim \frac{\sum_{r \leq i,j \leq p} \delta^{i+j-1}}{\sum_{r \leq i \leq p} \delta^{2i-1}} \to 1 & &\text{as } \delta \to 0,
\end{align*}
independently of $r$ and $k$. Thus, the condition number of the Jacobi preconditioned mass and stiffness matrices is independent of $\delta$ for the B-spline basis, even in the $C^0$ case. This finding is a complete surprise if one believes the intuitive arguments laid out in \cite{de2023stability} that insist on the smoothness. Instead, the choice of basis appears just as important. If one were to consider a generic linear combination of Lagrange basis functions, as we did for the B-splines, the terms appearing in $\|u\|_a^2$ would all scale \emph{identically} with respect to $\delta$, suggesting the possibility of canceling out low-order terms by choosing appropriate coefficients. This cannot happen for the B-spline basis because there is a single term of minimal order. The subtle reason lies in the increasing range of continuity of the basis functions across knots (see \Cref{fig: 1D_bases_p2_n4}). In the $C^0$ case, the $p$ problematic Lagrange basis functions are all $C^0$ across $\xi_l$ while the continuity of the Bernstein basis functions ranges from $C^0$ to $C^{p-1}$. Thus, it is the basis more than the actual space that is critical.

\begin{remark}
Although eventually the smallest eigenvalue of the Jacobi preconditioned stiffness and mass matrix scales identically, the numerator and denominator of the Rayleigh quotient scale differently. The outcome is identical if $A=a_1M+a_2K$ is a linear combination of the mass and stiffness matrices, where $a_1,a_2>0$ are constants independent of $\delta$. In this case, the operator norm becomes $\|v_h\|_a^2 = a_1 \|v_h\|_{L^2}^2+a_2 \|v_h'\|_{L^2}^2$. Therefore, for the Lagrange basis, we obtain for the same function $u$
\begin{equation*}
    \lambda_1(\hat{A}) \lesssim \frac{a_1 \delta^{2p+1}+a_2 \delta^{2p-1}}{a_1 \delta^3 + a_2 \delta} \sim \delta^{2(p-1)} \qquad \text{as } \delta \to 0.
\end{equation*}
Instead, for B-spline basis, we obtain
\begin{equation*}
    \lambda_1(\hat{A}) \sim \frac{\sum_{r \leq i,j \leq p} a_1\delta^{i+j+1}+a_2\delta^{i+j-1}}{\sum_{r \leq i \leq p} a_1\delta^{2i+1}+a_2\delta^{2i-1}} \to 1 \qquad \text{as } \delta \to 0,
\end{equation*}
because there is only a single term of minimal order in the numerator and denominator. Therefore, the results are asymptotically identical for linear combinations of the mass and stiffness matrices.
\end{remark}

\subsubsection{2D analysis}
\label{se: 2D_analysis}
In the 2D case, the behavior of the eigenvalues of the Jacobi preconditioned matrix critically depends on the cut configuration, also for smooth spline bases. Contrary to the 1D case, the volume fraction does not give a complete description, and instead the behavior of the basis functions relative to the cut geometry plays a prominent role. This also holds for generalized eigenvalue problems involving a (lumped) mass matrix, as shown in \cite{bioli2025theoretical}. Despite infinitely many trimmed configurations, there are essentially three representative ways a trimming curve may cut through the support of a basis function: 
\begin{enumerate}[noitemsep,label=Configuration (\alph*):,ref=Configuration (\alph*),leftmargin=*]
    \item in one direction only (``sliver-cut'', \Cref{fig: 2D_trimming_support_1direction}), \label{conf: sliver_cut}
    \item in two directions and around a corner (``corner-cut'', \Cref{fig: 2D_trimming_support_2direction}), \label{conf: corner_cut}
    \item in two directions but not around a corner (``middle-cut'', \Cref{fig: 2D_trimming_support_angle}). \label{conf: middle_cut}
\end{enumerate}
The first two trimming configurations are rather natural generalizations of the 1D case (referred to as ``sliver-cut'' and ``corner-cut'' in \cite{stoter2023critical}). However, the last one is specific to 2D geometries and is pathological in many regards. In this case, we must assume that $\bm{z}$ is ``sufficiently far'' from $\bm{\xi}_{l}$ (the anchor of the basis function); otherwise it reproduces \ref{conf: corner_cut}. \Cref{fig: 2D_trimming_example} shows an example where all three configurations are encountered. We will carefully analyze those three configurations for the Lagrange and B-spline bases. In both cases, multivariate basis functions are a tensor product of univariate ones such that
\begin{equation*}
    L_{\bm{i}}(\bm{\xi})=L_{i_1i_2}(\xi_1,\xi_2) = L_{1,i_1}(\xi_1)L_{2,i_2}(\xi_2) \quad \text{and} \quad B_{\bm{i}}(\bm{\xi})=B_{i_1i_2}(\xi_1,\xi_2) = B_{1,i_1}(\xi_1)B_{2,i_2}(\xi_2),
\end{equation*}
for a multi-index $\bm{i}=(i_1,i_2)$. In the next sections, we analyze the Lagrange and B-spline bases separately, using a local numbering of the basis functions to lighten the notation. Throughout the analysis, we use standard operations on multi-indices, where, given two multi-indices $\bm{i},\bm{j} \in \mathbb{N}^2$, $\bm{i}+\bm{j}$ and $\bm{i} \leq \bm{j}$ denotes componentwise addition and inequality, respectively, and $|\bm{i}|=i_1+i_2$. Furthermore, $\bm{p}=(p_1,p_2)$ are the degrees and $\bm{k}=(k_1,k_2)$ are the (spline) continuities along each direction.

\begin{figure}[H]
    \centering
    \begin{subfigure}[t]{0.32\textwidth}
        \centering
        \includegraphics[width=\textwidth]{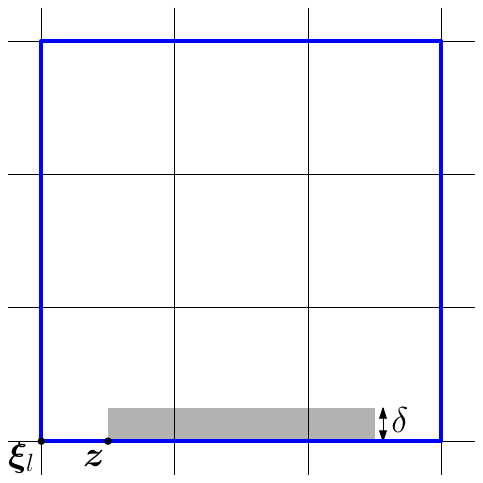}
        \caption{Trimming curve cutting along one direction only (``sliver-cut'').}
        \label{fig: 2D_trimming_support_1direction}
    \end{subfigure}
    \hfill
    \begin{subfigure}[t]{0.32\textwidth}
        \centering
        \includegraphics[width=\textwidth]{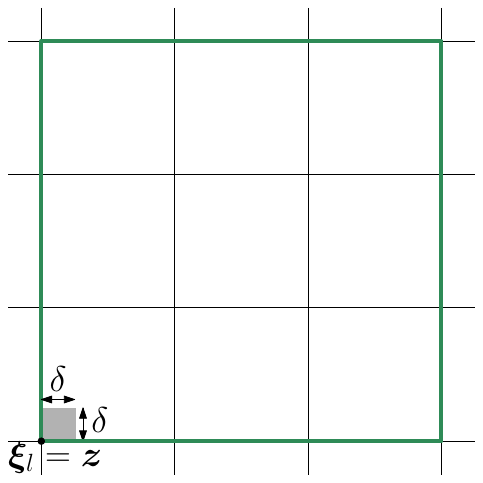}
        \caption{Trimming curve cutting in two directions and around a corner of the support (``corner-cut'').}
        \label{fig: 2D_trimming_support_2direction}
    \end{subfigure}
    \hfill
    \begin{subfigure}[t]{0.32\textwidth}
        \centering
        \includegraphics[width=\textwidth]{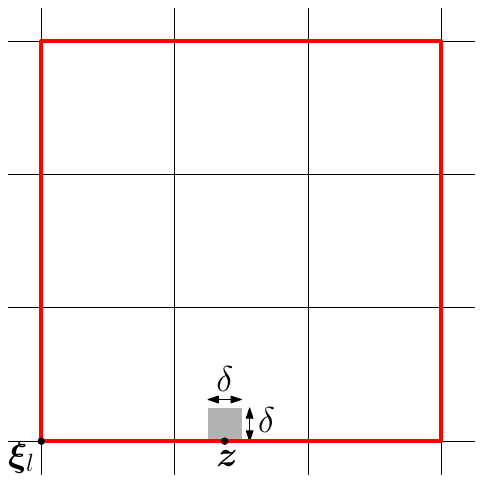}
        \caption{Trimming curve cutting in two directions but not around a corner of the support (``middle-cut'').}
        \label{fig: 2D_trimming_support_angle}
    \end{subfigure}
    \caption{Idealized trimming configurations in 2D. The point $\bm{\xi}_{l}=(\xi_{l_1},\xi_{l_2})$ denotes the anchor of a basis function while $\bm{z}=(z_1,z_2)$ parameterizes the position of the trimmed feature. Figure adapted from \cite{bioli2025theoretical}.}
    \label{fig: 2D_trimming_allconfigurations}
\end{figure}

\begin{figure}[H]
    \centering
    \includegraphics[width=0.5\textwidth]{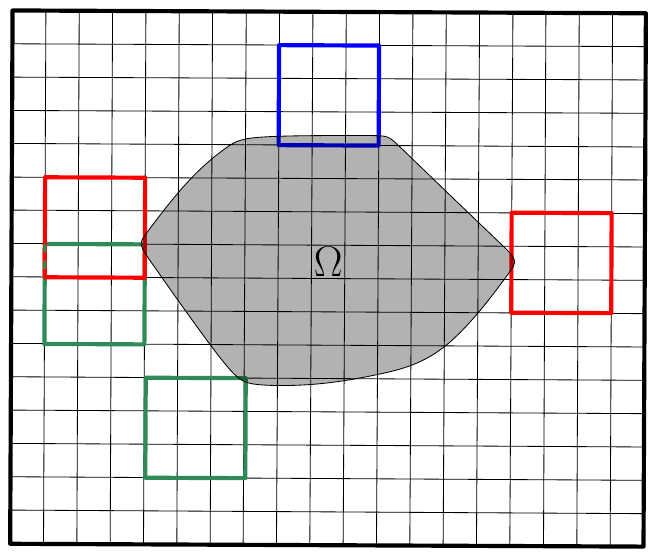}
    \caption{Support of several badly trimmed basis functions. Figure adapted from \cite{bioli2025theoretical}.}
    \label{fig: 2D_trimming_example}
\end{figure}

\noindent\textbf{Lagrange basis}: \\
\noindent \textbf{\ref{conf: sliver_cut}}:
Similarly to the 1D case, we will explicitly construct a function $u$ that causes the Rayleigh quotient in \eqref{eq: upper_bound_lambda_min} to vanish. A natural extension of the 1D case consists in choosing
\begin{equation*}
    u(\bm{\xi}) =
    \begin{cases}
        (\xi_2-\xi_{l_2})^{p_2} & \text{if } \xi_2 \geq \xi_{l_2}, \\
        0 & \text{otherwise}.
    \end{cases}
\end{equation*}
Repeating the 1D computations, we immediately obtain
\begin{align*}
    \|u\|_{L^2}^2 &= \int_{\Omega_l} (\xi_2-\xi_{l_2})^{2p_2} \ \dbxi \sim \delta^{2p_2+1} & &\text{as } \delta \to 0, \\
    \|\nabla u\|_{L^2}^2 &= \int_{\Omega_l} p_2^2(\xi_2-\xi_{l_2})^{2(p_2-1)} \ \dbxi \sim \delta^{2p_2-1} & &\text{as } \delta \to 0,
\end{align*}
while for the basis functions, we get
\begin{align*}
    \|L_{\bm{i}}\|_{L^2}^2 &= \int_{\Omega_l} L_{\bm{i}}^2 \ \dbxi \sim \delta^{3} & &\text{as } \delta \to 0, \\
    \|\nabla L_{\bm{i}}\|_{L^2}^2 &= \int_{\Omega_l} \|\nabla L_{\bm{i}}\|_2^2 \ \dbxi \sim \delta & &\text{as } \delta \to 0.
\end{align*}
Therefore, the results in this case perfectly mirror the 1D case:
\begin{equation*}
    \lambda_1(\hat{M}) \lesssim \frac{\delta^{2p_2+1}}{\delta^3} = \delta^{2(p_2-1)} \quad \text{and} \quad \lambda_1(\hat{K}) \lesssim \frac{\delta^{2p_2-1}}{\delta} = \delta^{2(p_2-1)},
\end{equation*}
and we deduce that
\begin{equation*}
    \kappa(\hat{A}) \gtrsim \delta^{-2(p_2-1)} \sim \eta^{-2(p_2-1)}.
\end{equation*}

\noindent \textbf{\ref{conf: corner_cut}}: This configuration is treated analogously by choosing instead
\begin{equation*}
    u(\bm{\xi}) =
    \begin{cases}
        (\xi_1-\xi_{l_1})^{p_1}(\xi_2-\xi_{l_2})^{p_2} & \text{if } \bm{\xi} \geq \bm{\xi}_{l}, \\
        0 & \text{otherwise}.
    \end{cases}
\end{equation*}
Since $(\xi_i-\xi_{l_i})^{p_i}$ is a linear combination of $\{L_{i,j}\}_{j=1}^{p_i}$ for $i=1,2$, it follows that $\restr{u}{\Omega_l}$ is merely a linear combination of $\{L_{\bm{i}}\}_{\bm{1} \leq \bm{i} \leq \bm{p}}$ and we conclude that $\restr{u}{\Omega} \in V_h$. Recalling that $\Omega_l = [\xi_{l_1},\xi_{l_1}+\delta) \times [\xi_{l_2},\xi_{l_2}+\delta)$, the evaluation of the numerator in \eqref{eq: upper_bound_lambda_min} yields for the mass and stiffness matrices
\begin{align*}
    \|u\|_{L^2}^2 &= \int_{\Omega_l} (\xi_1-\xi_{l_1})^{2p_1}(\xi_2-\xi_{l_2})^{2p_2} \ \dbxi \sim \delta^{2(|\bm{p}|+1)} & &\text{as } \delta \to 0, \\
    \|\nabla u\|_{L^2}^2 &= \int_{\Omega_l} p_1^2(\xi_1-\xi_{l_1})^{2(p_1-1)}(\xi_2-\xi_{l_2})^{2p_2} + p_2^2(\xi_1-\xi_{l_1})^{2p_1}(\xi_2-\xi_{l_2})^{2(p_2-1)} \ \dbxi \sim \delta^{2|\bm{p}|} & &\text{as } \delta \to 0,
\end{align*}
while for the basis functions, we obtain
\begin{align*}
    \|L_{\bm{i}}\|_{L^2}^2 &= \int_{\Omega_l} L_{\bm{i}}^2 \ \dbxi \sim \delta^6 & &\text{as } \delta \to 0, \\
    \|\nabla L_{\bm{i}}\|_{L^2}^2 &= \int_{\Omega_l} \|\nabla L_{\bm{i}}\|_2^2 \ \dbxi \sim \delta^4 & &\text{as } \delta \to 0.
\end{align*}
Thus, in both cases, the condition number of the Jacobi preconditioned matrix scales as
\begin{equation*}
    \kappa(\hat{A}) \gtrsim \delta^{-2(|\bm{p}|-2)} \sim \eta^{-(|\bm{p}|-2)}.
\end{equation*}
Although the scaling is different with respect to $\delta$, it is identical with respect to $\eta \sim \delta^2$ for uniform discretization parameters. This will not be the case in our last trimming configuration. \\

\noindent \textbf{\ref{conf: middle_cut}}: In principle, the same function $u$ could also be chosen in this last configuration to show that the Rayleigh quotient vanishes. However, in this case, there exist better choices for which the Rayleigh quotient decays much faster. With the notation in \Cref{fig: 2D_trimming_support_angle}, we construct a function $u$ such that
\begin{equation*}
    \restr{u}{\Omega_l}(\bm{\xi}) = (\xi_1-z_1)^{p_1}(\xi_2-z_2)^{p_2}.
\end{equation*}
Note that $z_2=\xi_{l_2}$ and thus the integration with respect to $\xi_2$ remains unchanged. However, $u_1(\xi_1):=(\xi_1-z_1)^{p_1}$ differs from the last case. In particular, since $u_1(\xi_{l_1}) \neq 0$, its expansion in the Lagrange basis must necessarily involve $L_{1,0}$ (contrary to all previous cases). Thus, there exist coefficients $u_{1,i}$ (with $u_{1,0} \neq 0$) and $u_{2,j}$ such that
\begin{equation*}
    \restr{u}{\Omega_l}(\bm{\xi})  = \left(\sum_{i_1=0}^{p_1} u_{1,i_1}L_{1,i_1}(\xi_1)\right)\left(\sum_{i_2=1}^{p_2}u_{2,i_2} L_{2,i_2}(\xi_2)\right).
\end{equation*}
The analysis of this case is very subtle since the placement of the centerpoint $\bm{z}$ within an element also influences the scaling. For understanding why, one must have in mind the expression of the Lagrange basis functions in \eqref{eq: lagrange_polynomials}. Denoting $\{\hat{\xi}_{1,i}\}_{i=0}^{p_1}$ the interpolation points along the $x$ direction within the trimmed element (with $\hat{\xi}_{1,0}=\xi_{l_1}$ and $\hat{\xi}_{1,p_1}=\xi_{l_1+1}$), the univariate Lagrange basis functions along the horizontal direction are
\begin{equation*}
    L_{1,i}(\xi_1) = \prod_{\substack{j=0 \\ j \neq i}}^{p_1} \frac{\xi_1-\hat{\xi}_{1,j}}{\hat{\xi}_{1,i}-\hat{\xi}_{1,j}} \qquad i=0,\dots,p_1.
\end{equation*}
Hence, the coefficients $u_{1,i}$ are given by $u_{1,i}=u_1(\hat{\xi}_{1,i})$. We must now distinguish cases, depending on whether $z_1$ falls in the neighborhood of an interpolation point. Let $\mathcal{B}_{\xi}(\delta)$ denote the ball of radius $\delta$ centered at $\xi$.

\begin{itemize}[noitemsep]
    \item If $z_1 \notin \mathcal{B}_{\hat{\xi}_{1,i}}(\delta)$ for any $i$, then all coefficients are independent of $\delta$ and for all $j=0,\dots,p_1$,
    \begin{equation*}
        \|L_{1,j}\|_{L^2}^2 \sim \delta \quad \text{and} \quad \|L_{1,j}'\|_{L^2}^2 \sim \delta \quad \text{as } \delta \to 0.
    \end{equation*}
    Therefore,
    \begin{equation*}
        \|L_{\bm{i}}\|_{L^2}^2 \sim \delta^4 \quad \text{and} \quad \|\nabla L_{\bm{i}}\|_{L^2}^2 \sim \delta^2 \quad \text{as } \delta \to 0.
    \end{equation*}
    
    \item If $z_1 \in \mathcal{B}_{\hat{\xi}_{1,i}}(\delta)$ for some $i$, then $u_{1,i} \lesssim \delta^{p_1}$ while the other coefficients are independent of $\delta$. Since all basis functions except $L_{1,i}(\xi_1)$ include the product $(\xi_1-\hat{\xi}_{1,i})$, for all $j \neq i$,
    \begin{equation*}
        \|L_{1,j}\|_{L^2}^2 \sim \delta^3 \quad \text{and} \quad \|L_{1,j}'\|_{L^2}^2 \sim \delta \quad \text{as } \delta \to 0.
    \end{equation*}
    However, for $j=i$, $\|L_{1,i}\|_{L^2}^2 \sim \delta$ and $\|L_{1,i}'\|_{L^2}^2 \sim \delta$. While for the other direction,
    \begin{equation*}
        \|L_{2,j}\|_{L^2}^2 \sim \delta^3 \quad \text{and} \quad \|L_{2,j}'\|_{L^2}^2 \sim \delta \quad \text{as } \delta \to 0.
    \end{equation*}
    Consequently, for the mass,
    \begin{align*}
        \sum_{(0,1) \leq \bm{i} \leq \bm{p}} u_{\bm{i}}^2 \|L_{\bm{i}}\|_{L^2}^2 &= \left(\sum_{i_1=0}^{p_1} u_{1,i_1}^2 \|L_{1,i_1}\|_{L^2}^2\right)\left(\sum_{i_2=1}^{p_2} u_{2,i_2}^2 \|L_{2,i_2}\|_{L^2}^2\right) \\
        &\sim (\delta^{2p_1+1} + \delta^3)\delta^3 \sim \delta^6 \quad \text{as } \delta \to 0.
    \end{align*}
    While for the stiffness,
    \begin{align*}
        \sum_{(0,1) \leq \bm{i} \leq \bm{p}} u_{\bm{i}}^2 \|\nabla L_{\bm{i}}\|_{L^2}^2 &= \sum_{i_1=0}^{p_1} \sum_{i_2=1}^{p_2} u_{1,i_1}^2 u_{2,i_2}^2 (\|L_{1,i_1}'\|_{L^2}^2\|L_{2,i_2}\|_{L^2}^2+\|L_{1,i_1}\|_{L^2}^2\|L_{2,i_2}'\|_{L^2}^2) \\
        &= \left(\sum_{i_1=0}^{p_1} u_{1,i_1}^2 \|L_{1,i_1}'\|_{L^2}^2\right)\left(\sum_{i_2=1}^{p_2} u_{2,i_2}^2 \|L_{2,i_2}\|_{L^2}^2\right)+\left(\sum_{i_1=0}^{p_1} u_{1,i_1}^2 \|L_{1,i_1}\|_{L^2}^2\right)\left(\sum_{i_2=1}^{p_2} u_{2,i_2}^2 \|L_{2,i_2}'\|_{L^2}^2\right) \\
        &\sim (\delta^{2p_1+1}+\delta)\delta^3 + (\delta^{2p_1+1}+\delta^3)\delta \\
        &\sim \delta^4 \quad \text{as } \delta \to 0.
    \end{align*}
\end{itemize}
Thus, the coefficient $u_{1,i}$ only contributes high-order terms (and vanishes entirely if $\hat{\xi}_{1,i}=z_1$). However, the function $u$ itself is independent of the interpolation points, and similarly to the previous configuration, we obtain in either case
\begin{equation*}
    \|u\|_{L^2}^2 \sim \delta^{2(|\bm{p}|+1)} \quad \text{and} \quad \|\nabla u\|_{L^2}^2 \sim \delta^{2|\bm{p}|} \quad \text{as } \delta \to 0.
\end{equation*}
Finally, we obtain the following scaling for the condition number of the preconditioned mass and stiffness matrices:
\begin{itemize}
    \item if $z_1 \notin \mathcal{B}_{\hat{\xi}_{1,i}}(\delta)$, then 
    \begin{equation*}
        \kappa(\hat{A}) \gtrsim \delta^{-2(|\bm{p}-1)} \sim \eta^{-(|\bm{p}|-1)},
    \end{equation*}
    \item if $z_1 \in \mathcal{B}_{\hat{\xi}_{1,i}}(\delta)$ for some $i$, then
    \begin{equation*}
        \kappa(\hat{A}) \gtrsim \delta^{-2(|\bm{p}|-2)} \sim \eta^{-(|\bm{p}|-2)}.
    \end{equation*}
\end{itemize}
Thus, the second case is slightly more favorable. We now repeat the analysis for the B-spline basis. \\

\noindent\textbf{B-spline basis}: \\
\noindent \textbf{\ref{conf: sliver_cut}}:
The proof arguments are completely analogous to the 1D configuration. Considering a generic function $u$ only supported on the trimmed elements, we notice that the Rayleigh quotient is always independent of $\delta$. Thus, the ``sliver-cut'' configuration does not cause any ill-conditioning for the B-spline basis. \\

\noindent \textbf{\ref{conf: corner_cut}}:
This configuration is a tensor product generalization of the 1D case. We detail it to make sure that the same arguments apply. Considering a generic function $u$ supported on the trimmed corner in \Cref{fig: 2D_trimming_support_2direction}
\begin{equation*}
    u(\bm{\xi}) =
    \begin{cases}
        \sum_{\bm{k} < \bm{i} \leq \bm{p}} u_{\bm{i}} B_{\bm{i}}(\bm{\xi}) & \text{if } \bm{\xi} \geq \bm{\xi}_{l}, \\
        0 & \text{otherwise}.
    \end{cases}
\end{equation*}
In the rescaled B-spline basis, the restriction of this function becomes
\begin{equation*}
    \restr{u}{\Omega_l}(\bm{\xi}) = \sum_{\bm{k} < \bm{i} \leq \bm{p}} \hat{u}_{\bm{i}} \hat{B}_{\bm{i}}(\bm{\xi}),
\end{equation*}
where $\hat{u}_{\bm{i}} = \alpha_{\bm{i}}u_{\bm{i}}$ with $\alpha_{\bm{i}}=\|B_{\bm{i}}\|_a$ are the coefficients of $\restr{u}{\Omega}$ in the rescaled B-spline basis. Denoting $r_1= \min \{i_1 \colon u_{i_1i_2} \neq 0\} \geq 1$, $r_2= \min \{i_2 \colon u_{i_1i_2} \neq 0\} \geq 1$ and $\bm{r}=(r_1,r_2)$, the numerator in \eqref{eq: upper_bound_lambda_min} becomes
\begin{equation*}
    \|u\|_a^2 = \sum_{\bm{r} \leq \bm{i},\bm{j} \leq \bm{p}}  u_{\bm{i}}u_{\bm{j}} a(B_{\bm{i}},B_{\bm{j}}).
\end{equation*}
Moreover, from the scaling relations in \cref{eq: L2_Bspline,eq: H1_Bspline},
\begin{align*}
    (B_{\bm{i}},B_{\bm{j}})_{L^2} &= (B_{1,i_1},B_{1,j_1})_{L^2}(B_{2,i_2},B_{2,j_2})_{L^2} \sim \delta^{|\bm{i}|+|\bm{j}|+2} & &\text{as } \delta \to 0, \\
    (\nabla B_{\bm{i}}, \nabla B_{\bm{j}})_{L^2} &= (B'_{1,i_1},B'_{1,j_1})_{L^2}(B_{2,i_2},B_{2,j_2})_{L^2}+(B_{1,i_1},B_{1,j_1})_{L^2}(B'_{2,i_2},B'_{2,j_2})_{L^2} \sim \delta^{|\bm{i}|+|\bm{j}|} & &\text{as } \delta \to 0.
\end{align*}
Therefore, from the Rayleigh quotient in \eqref{eq: upper_bound_lambda_min}, we deduce that
\begin{align*}
    \lambda_1(\hat{M}) &\sim \frac{\sum_{\bm{r} \leq \bm{i},\bm{j} \leq \bm{p}} \delta^{|\bm{i}|+|\bm{j}|+2}}{\sum_{\bm{r} \leq \bm{i} \leq \bm{p}} \delta^{2|\bm{i}|+2}} \to 1 & &\text{as } \delta \to 0, \\
    \lambda_1(\hat{K}) &\sim \frac{\sum_{\bm{r} \leq \bm{i},\bm{j} \leq \bm{p}} \delta^{|\bm{i}|+|\bm{j}|}}{\sum_{\bm{r} \leq \bm{i} \leq \bm{p}} \delta^{2|\bm{i}|}} \to 1 & &\text{as } \delta \to 0,
\end{align*}
independently of $\bm{r}$ and $\bm{k}$. Indeed, although some of the terms in the sum might scale identically with respect to $\delta$, there always exists a single term of minimal order, which ensures that the Rayleigh quotient becomes independent of $\delta$ in the limit as $\delta \to 0$, regardless of the coefficients $u_{\bm{i}}$. Thus, the results so far perfectly align with the 1D case. Unfortunately, the last configuration is again an exception. \\

\noindent \textbf{\ref{conf: middle_cut}}:
Repeating the previous derivations for this configuration reveals that there exist multiple terms of minimal order, and our arguments fall through. This is a strong hint for instead seeking a counter-example. Two cases must be distinguished depending on the position of $z_1$. 

\begin{itemize}
    \item If $z_1 \notin \mathcal{B}_{\xi_{l_i}}(\delta)$, then we consider the same counter-example as for the Lagrange basis by constructing a function $u$ such that
\begin{equation*}
    \restr{u}{\Omega_l}(\bm{\xi}) = (\xi_1-z_1)^{p_1}(\xi_2-z_2)^{p_2}.
\end{equation*}
Since $z_2=\xi_{l_2}$, $u_2(\xi_2)=(\xi_2-z_2)^{p_2} \sim B_{2,p_2}$ behaves as the B-spline of maximal smoothness, which always exists independently of the local continuity. However, since $z_1 \neq \xi_{l_1}$, this is not the case for $u_1(\xi_1)=(\xi_1-z_1)^{p_1}$, whose expansion in the B-spline basis will include multiple nonzero coefficients (independent of $\delta$). Moreover, similarly to the Lagrange basis,
\begin{equation*}
    \|B_{1,j}\|_{L^2}^2 \sim \delta \quad \text{and} \quad \|B_{1,j}'\|_{L^2}^2 \sim \delta \quad \text{as } \delta \to 0.
\end{equation*}
The scaling for the numerator of the Rayleigh quotient remains unchanged, and we obtain
\begin{equation*}
    \|u\|_{L^2}^2 \sim \delta^{2(|\bm{p}|+1)} \quad \text{and} \quad \|\nabla u\|_{L^2}^2 \sim \delta^{2|\bm{p}|} \quad \text{as } \delta \to 0.
\end{equation*}
However, since $u_2(\xi_2)$ is perfectly described by the smoothest basis functions along the $y$ direction
\begin{equation*}
    \|B_{\bm{i}}\|_{L^2}^2 \sim \delta^{2(p_2+1)} \quad \text{and} \quad \|\nabla B_{\bm{i}}\|_{L^2}^2 \sim \delta^{2p_2} \quad \text{as } \delta \to 0.
\end{equation*}
Finally, we deduce that
\begin{equation*}
    \kappa(\hat{A}) \gtrsim \delta^{-2p_1} \sim \eta^{-p_1}.
\end{equation*}
\item If $z_1 \in \mathcal{B}_{\xi_{l_i}}(\delta)$, then $u$ does not produce a counter-example since it is (almost) described by a basis function from \ref{conf: corner_cut}. However, there might exist multiple other basis functions that are $O(1)$ over the cut element. The only exception is the $C^0$ case, where the univariate basis functions all scale differently with respect to $\delta$. Hence, similarly to the ``corner-cut'' configuration, linear dependencies cannot arise for $C^0$ continuity. However, smooth spline spaces become problematic. Since there are multiple functions that scale as $O(1)$, this leaves the door open for potential cancellation. Let us provide an explicit counter-example. For $k_1 \geq 1$, we build a function $u$ such that
\begin{equation*}
    \restr{u}{\Omega_l}(\bm{\xi}) = (\xi_1-z_1)^{k_1}(\xi_2-z_2)^{p_2}.
\end{equation*}
Recall that the continuity $k_1$ is related to the knot multiplicity $m_1$ through the relation $k_1=p_1-m_1$ and $1 \leq m_1 \leq p_1$. For smooth spline spaces, there are $m_1$ splines starting at $\xi_{l_1}$ with smoothness ranging from $C^{k_1}$ to $C^{p_1-1}$. Denoting this set of functions $\{B_{1,k_1+1},\dots,B_{1,p_1}\}$, they must necessarily take the form
\begin{equation*}
    B_{1,i}(\xi_1) = \sum_{j=i}^{p_1} c_{ij}(\xi_1-\xi_{l_1})^j \qquad i=k_1+1,\dots,p_1,
\end{equation*}
with $c_{ii} \neq 0$ for all $i=k_1+1,\dots,p_1$. Since the term of lowest order is $(\xi_1-\xi_{l_1})^{k_1+1}$, the expansion of $u_1(\xi_1)=(\xi_1-z_1)^{k_1}$ in the spline basis must necessarily involve other basis functions that are $O(1)$ over the trimmed element (even if $z_1=\xi_{l_1}$). Consequently, similarly to the previous case,
\begin{equation*}
    \|B_{\bm{i}}\|_{L^2}^2 \sim \delta^{2(p_2+1)} \quad \text{and} \quad \|\nabla B_{\bm{i}}\|_{L^2}^2 \sim \delta^{2p_2} \quad \text{as } \delta \to 0.
\end{equation*}
However, the scaling of the numerator becomes
\begin{equation*}
    \|u\|_{L^2}^2 \sim \delta^{2(k_1+p_2+1)} \quad \text{and} \quad \|\nabla u\|_{L^2}^2 \sim \delta^{2(k_1+p_2)} \quad \text{as } \delta \to 0.
\end{equation*}
Thus, we deduce that
\begin{equation*}
    \kappa(\hat{A}) \gtrsim \delta^{-2k_1} \sim \eta^{-k_1}.
\end{equation*}
This relation actually combines the $C^0$ and smooth cases in a single expression, and it is the first time we experience an explicit dependency on the continuity. Yet, this dependency is the opposite of the one that is commonly advertised: increased smoothness is in this case detrimental to the scaling of the condition number.
\end{itemize}

\begin{remark}
The above arguments also hold if $z_1 \notin \mathcal{B}_{\xi_{l_i}}(\delta)$ but in this case, a better counter-example exists.   
\end{remark}

The scaling relations in terms of the volume fraction $\eta = \min_{T \in \mathcal{T}_h} |T \cap \Omega|/|T|$ are summarized in \Cref{tab: scaling_jacobi} for the Lagrange and B-spline bases and uniform discretization parameters ($p_1=p_2=p$ and $k_1=k_2=k$).

\begin{table}[H]
\centering
\begin{tabular}{lll}
    \toprule
    Cut configuration & Lagrange basis & B-spline basis \\
    \midrule
    1D & $\gtrsim \eta^{-2(p-1)}$ & $\sim 1$ \\
    2D - \ref{conf: sliver_cut} (``sliver-cut'') & $ \gtrsim \eta^{-2(p-1)}$ & $\sim 1$ \\
    2D - \ref{conf: corner_cut} (``corner-cut'') & $ \gtrsim \eta^{-2(p-1)}$ & $\sim 1$ \\
    2D - \ref{conf: middle_cut} (``middle-cut'') & $
        \begin{cases}
            \gtrsim \eta^{-(2p-1)} &  \text{if } z_1 \notin \mathcal{B}_{\hat{\xi}_{1,i}}(\delta) \\
            \gtrsim \eta^{-2(p-1)} &  \text{if } z_1 \in \mathcal{B}_{\hat{\xi}_{1,i}}(\delta) 
        \end{cases}$ & $
        \begin{cases}
            \gtrsim \eta^{-p} &  \text{if } z_1 \notin \mathcal{B}_{\xi_{l_i}}(\delta) \\
            \gtrsim \eta^{-k} &  \text{if } z_1 \in \mathcal{B}_{\xi_{l_i}}(\delta) 
        \end{cases}$ \\
    \bottomrule
\end{tabular}
\caption{Scaling relations for the condition number of the Jacobi preconditioned matrix}
\label{tab: scaling_jacobi}
\end{table}

\begin{example}[Trimmed line]
\label{ex: trimmed_line}
This example confirms numerically the scaling relations for the trimmed 1D line of \Cref{fig: 1D_illustration} with Dirichlet boundary conditions on its left side and Neumann boundary conditions on its right (trimmed) side. The conditioning of the Jacobi preconditioned mass matrix is shown in \Cref{fig: 1D_Laplace_trimming_cond_M_Jacobi} for the Lagrange basis (\Cref{fig: 1D_Laplace_trimming_cond_M_Jacobi_Lagrange}), the Bernstein basis (\Cref{fig: 1D_Laplace_trimming_cond_M_Jacobi_Bspline_C0}) and the maximally smooth B-spline basis (\Cref{fig: 1D_Laplace_trimming_cond_M_Jacobi_Bspline_Cp-1}). The results perfectly align with the scaling relations reported in \Cref{tab: scaling_jacobi}. Those for the stiffness matrix were similar and are therefore omitted. In summary, for $\eta$ small enough, the condition number for spline discretizations is independent of $\eta$, even for the Bernstein basis (although the constant is significantly worse than in the maximally smooth case). Thus, the effectiveness of Jacobi preconditioning in 1D is not explained by the \emph{global} regularity of the space (as sometimes claimed), but rather by the \emph{local} regularity of the basis functions. Indeed, although the Lagrange and Bernstein bases span the same $C^0$ continuous space, the regularity of individual Bernstein basis functions across knots actually ranges from $C^0$ to $C^{p-1}$, contrary to the Lagrange basis functions, which are all $C^0$. Thus, subtle differences in the basis properties may critically affect the conditioning. Unfortunately, from dimension $2$ onward, many more factors enter the equation, and future examples will attempt to summarize the main messages of our analysis.

\begin{figure}[H]
     \centering
     \begin{subfigure}[t]{0.31\textwidth}
    \centering
    \includegraphics[width=\textwidth]{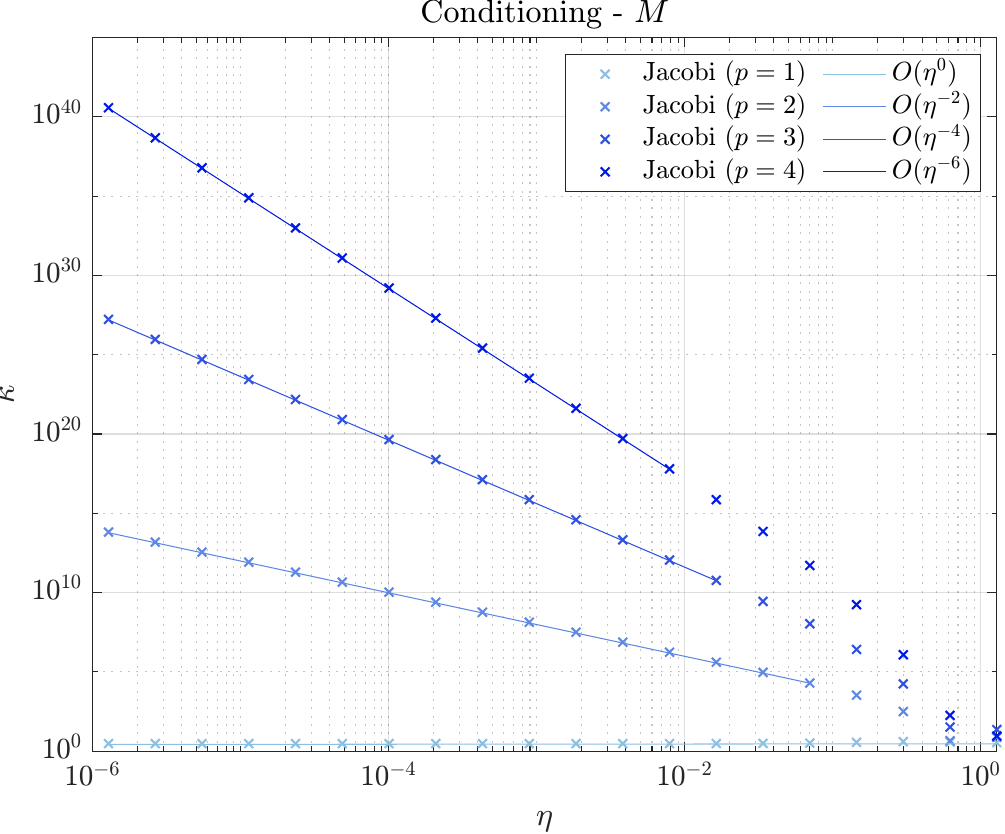}
    \caption{Lagrange basis}
    \label{fig: 1D_Laplace_trimming_cond_M_Jacobi_Lagrange}
     \end{subfigure}
     \hfill
     \begin{subfigure}[t]{0.31\textwidth}
    \centering
    \includegraphics[width=\textwidth]{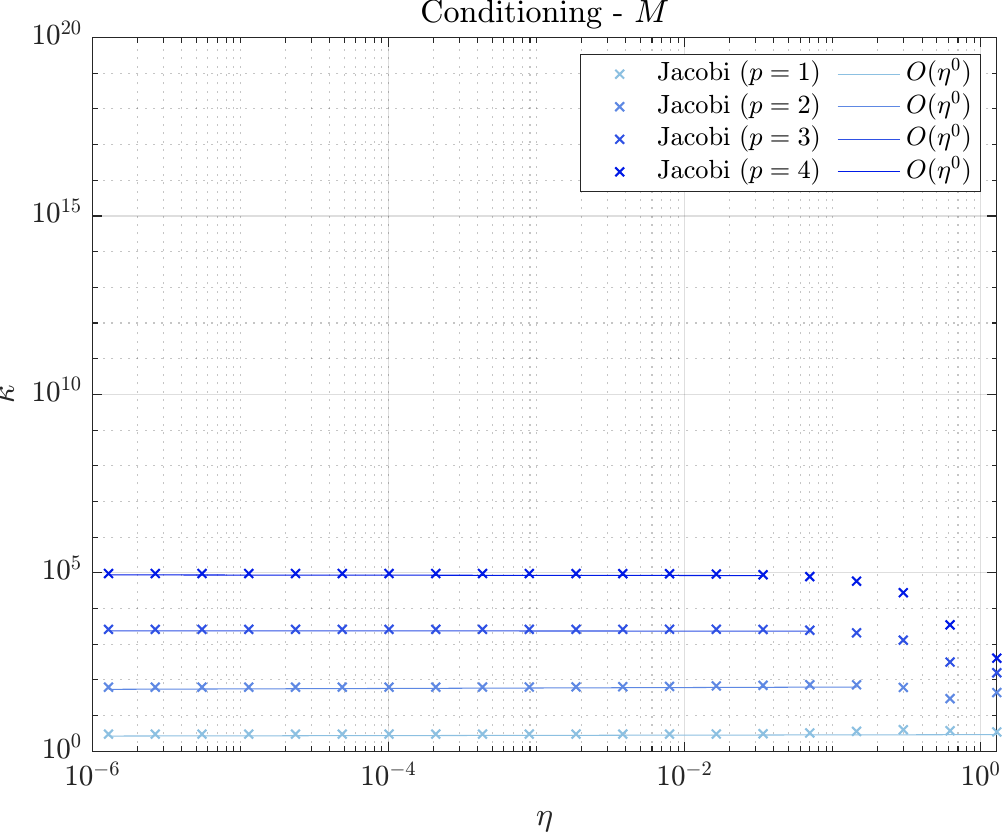}
    \caption{$C^0$ B-spline basis}
    \label{fig: 1D_Laplace_trimming_cond_M_Jacobi_Bspline_C0}
     \end{subfigure}
     \hfill
    \begin{subfigure}[t]{0.31\textwidth}
    \centering
    \includegraphics[width=\textwidth]{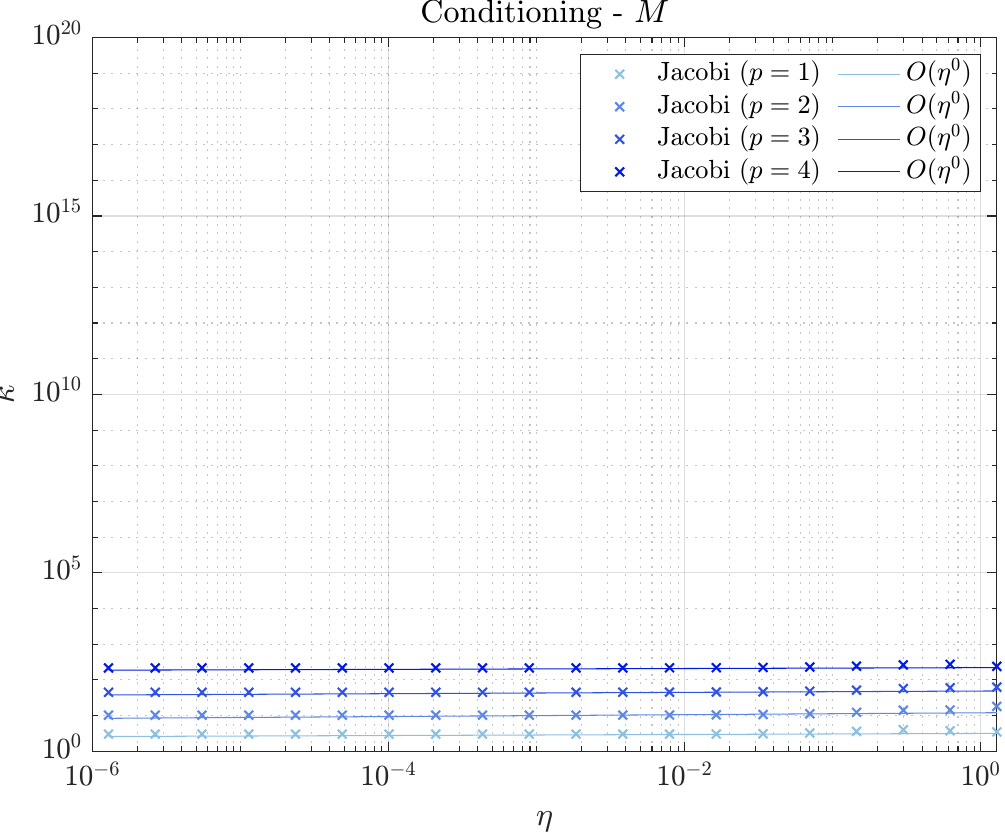}
    \caption{$C^{p-1}$ B-spline basis}
    \label{fig: 1D_Laplace_trimming_cond_M_Jacobi_Bspline_Cp-1}
     \end{subfigure}
     \hfill
    \caption{Conditioning of the Jacobi preconditioned mass matrix for the trimmed 1D line (\Cref{ex: trimmed_line})}
    \label{fig: 1D_Laplace_trimming_cond_M_Jacobi}
\end{figure}    
\end{example}

\begin{remark}
\label{rk: eigenvalue_computation}
While the eigenvalues and condition numbers are accurately computed for system matrices in the Bernstein basis, earlier work has reported signs of instability for the Lagrange basis and discretizations of degree $p \geq 3$ \cite{de2017condition,ballotta2023preconditioners}. Fortunately, there is a simple workaround. Since the stiffness and mass matrices are the Gram matrix of a bilinear form, their expressions in different bases are related through a congruence transformation,
\begin{equation}
\label{eq: change_of_basis}
    K_\dutchcal{L} = C^T K_\dutchcal{B} C \quad \text{and} \quad M_\dutchcal{L} = C^T M_\dutchcal{B} C,
\end{equation}
where $\dutchcal{L}$ and $\dutchcal{B}$ denote the Lagrange and Bernstein bases, respectively, and $C$ is the basis transformation matrix. Clearly, \eqref{eq: change_of_basis} is very convenient for a computer implementation since it avoids directly assembling the stiffness and mass in the Lagrange basis. However, the change of basis matrix $C=A^{-1}$ is the inverse of a collocation matrix $A$ containing B-spline evaluations. Thus, explicitly forming the system matrices from the relations in \eqref{eq: change_of_basis} might induce a loss of accuracy that also affects subsequent eigenvalue computations. Instead of directly computing the eigenvalues of $K_\dutchcal{L}$ and $M_\dutchcal{L}$, we compute those of the equivalent matrix pairs $(K_\dutchcal{B}, A^TA)$ and $(M_\dutchcal{B}, A^TA)$, which do not require any explicit inverse. This strategy allowed stably computing the eigenvalues with a (shift and invert) Lanczos method over a wider range of degrees.
\end{remark}

\begin{example}[Stretched square]
\label{ex: stretched_square}
This example combines \ref{conf: sliver_cut} and \ref{conf: corner_cut} in a single geometry, consisting of a square of side length $0.5+\delta$ whose bottom left corner is fixed at the origin. The conditioning of the Jacobi preconditioned mass matrix is shown in \Cref{fig: 2D_Laplace_square_cond_M_Jacobi} as $\delta \to 0$. Once again, the results perfectly align with our theoretical predictions: for the Lagrange basis, the condition number grows as $\eta^{-2(p-1)}$ while it remains constant for B-spline bases, even for the $C^0$ case (although the constant scales poorly with the degree).

\begin{figure}[H]
    \centering
    \includegraphics[width=0.5\textwidth]{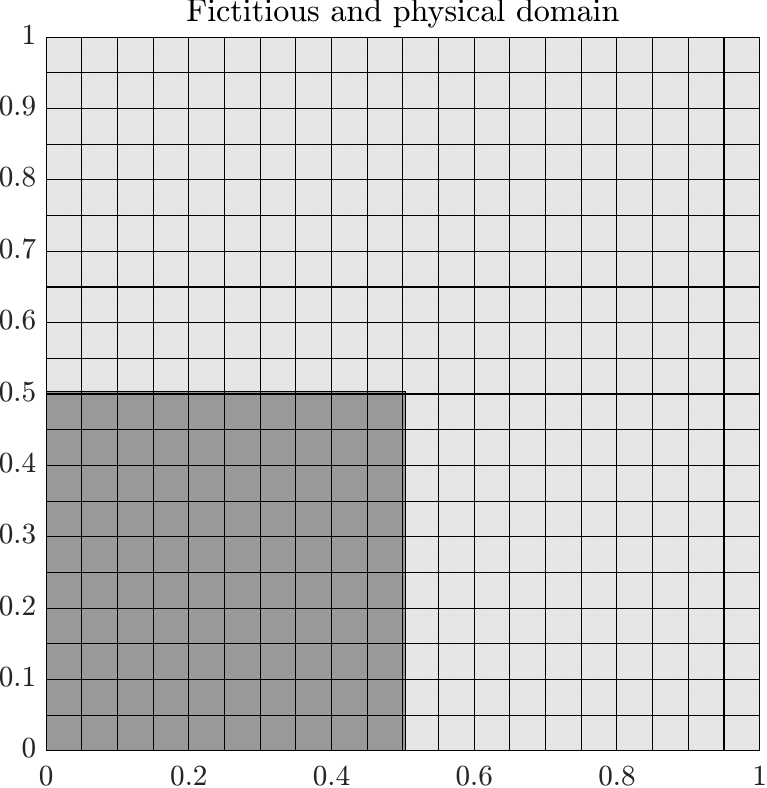}
    \caption{Stretched square (\Cref{ex: stretched_square})}
    \label{fig: 2D_Laplace_square_mesh}
\end{figure}

\begin{figure}[H]
     \centering
     \begin{subfigure}[t]{0.31\textwidth}
    \centering
    \includegraphics[width=\textwidth]{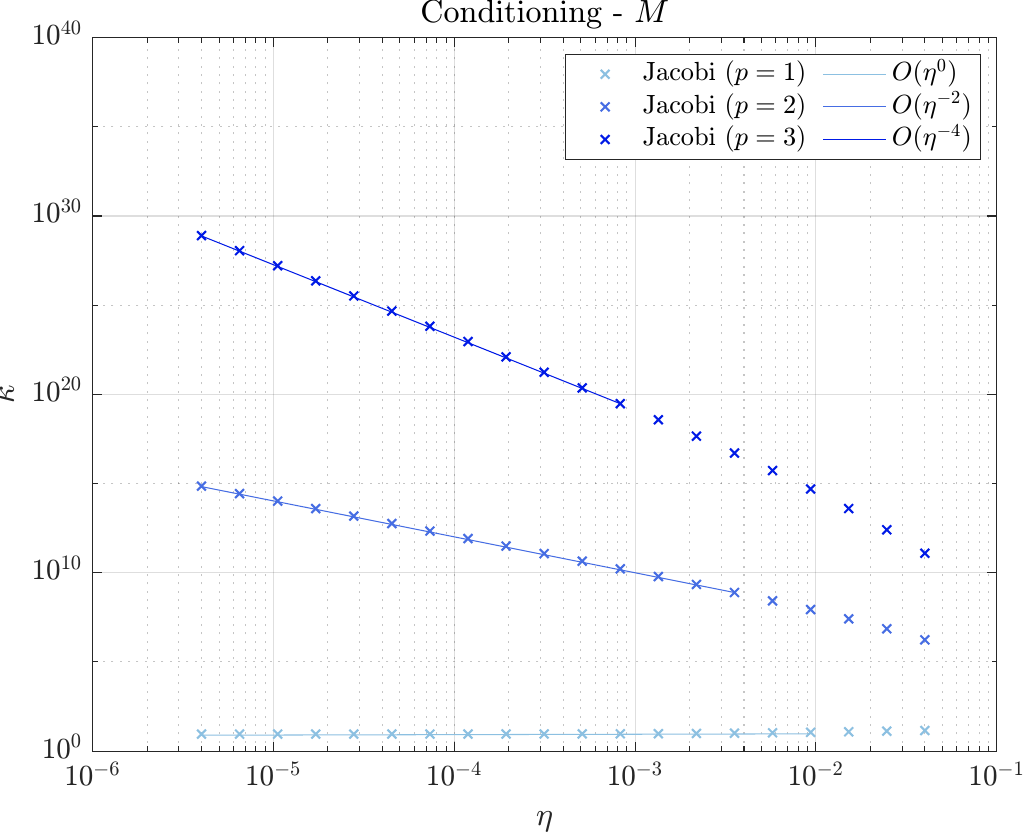}
    \caption{Lagrange basis}
    \label{fig: 2D_Laplace_square_cond_M_Jacobi_Lagrange}
     \end{subfigure}
     \hfill
     \begin{subfigure}[t]{0.31\textwidth}
    \centering
    \includegraphics[width=\textwidth]{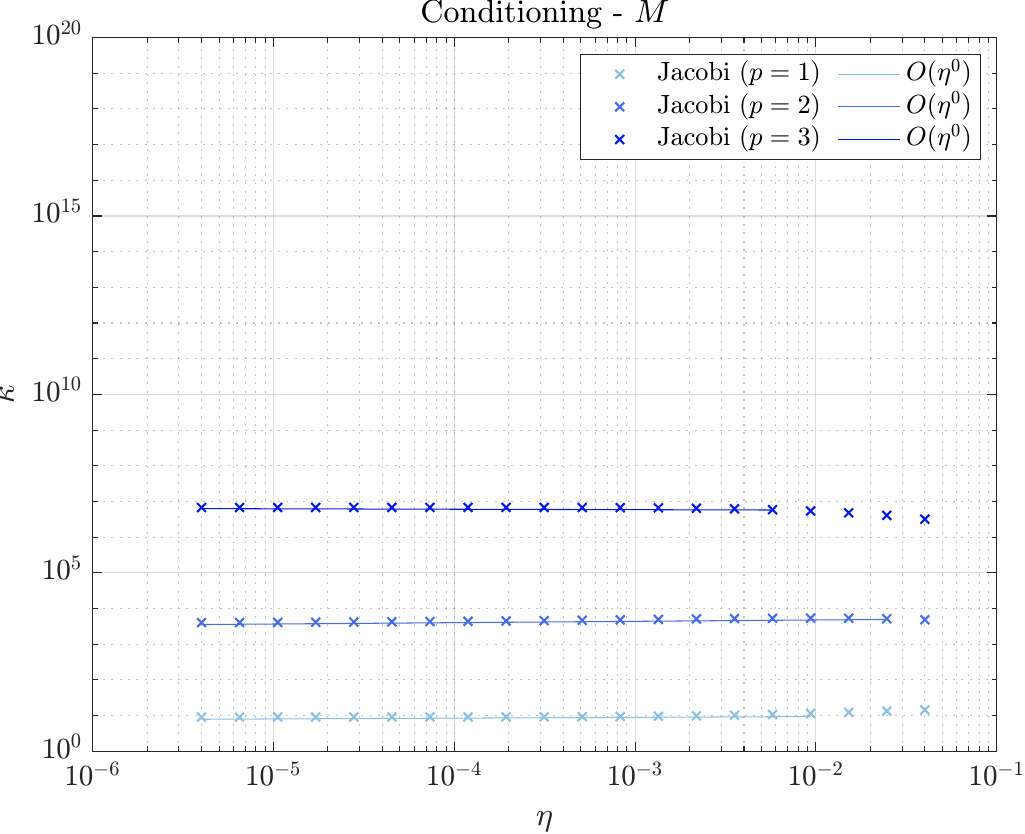}
    \caption{$C^0$ B-spline basis}
    \label{fig: 2D_Laplace_square_cond_M_Jacobi_Bspline_C0}
     \end{subfigure}
     \hfill
    \begin{subfigure}[t]{0.31\textwidth}
    \centering
    \includegraphics[width=\textwidth]{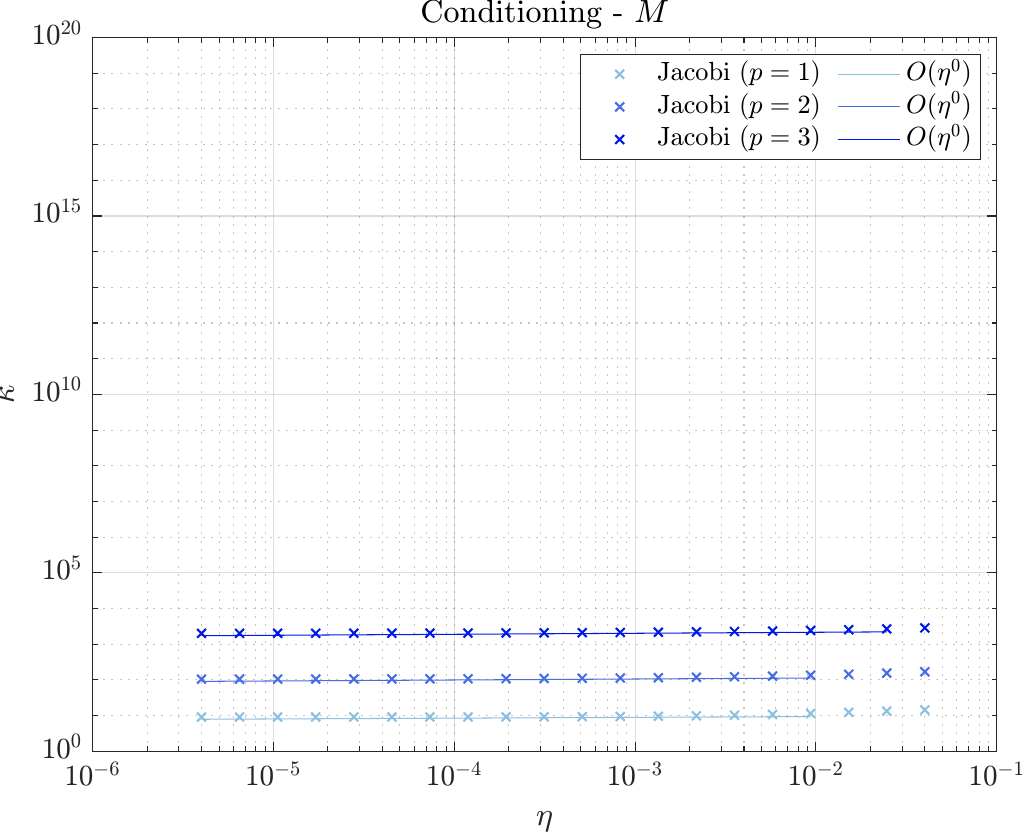}
    \caption{$C^{p-1}$ B-spline basis}
    \label{fig: 2D_Laplace_square_cond_M_Jacobi_Bspline_Cp-1}
     \end{subfigure}
     \hfill
    \caption{Conditioning of the Jacobi preconditioned mass matrix for the stretched square (\Cref{ex: stretched_square})}
    \label{fig: 2D_Laplace_square_cond_M_Jacobi}
\end{figure}    
\end{example}

\begin{example}[Ridge]
\label{ex: ridge}
Finally, to confirm the theoretical findings for \ref{conf: middle_cut}, we consider a house-like trimmed geometry that is shifted along the horizontal to produce different (not necessarily symmetric) trimming configurations. In the first case (\Cref{fig: 2D_Laplace_ridge_corner_mesh}), the house's ridge is perfectly aligned with a vertical grid line, thereby reproducing a corner-cut configuration. In the second case (\Cref{fig: 2D_Laplace_ridge_center_mesh}), the base of the house is shifted horizontally to produce a centered middle-cut configuration. Finally, in the third case (\Cref{fig: 2D_Laplace_ridge_decenter_mesh}), the base is further shifted to the right to slightly decenter the tip. In each case, pure Neumann boundary conditions are prescribed all along the boundary. The conditioning of the Jacobi preconditioned mass matrix is reported in \Cref{fig: 2D_Laplace_ridge_cond_M_Jacobi_Lagrange,fig: 2D_Laplace_ridge_cond_M_Jacobi_Bspline_C0,fig: 2D_Laplace_ridge_cond_M_Jacobi_Bspline_Cp-1} for the Lagrange, Bernstein, and maximally smooth B-spline bases, respectively. For the Lagrange basis, the scaling keeps changing from one configuration to the next, as expected theoretically. In particular, when the cut is perfectly centered (\Cref{fig: 2D_Laplace_ridge_center_cond_M_Jacobi_Lagrange}), $z_1$ coincides with an interpolation point for even degrees, which explains the milder growth of the condition number for $p=2$. However, when the cut is decentered, its tip never coincides with an interpolation point, and the condition number grows as $\eta^{-(2p-1)}$ independently of the degree parity. Instead, for spline bases, the tip coincides with a knot in \Cref{fig: 2D_Laplace_ridge_corner_mesh} and the condition number grows at the expected rate of $\eta^{-k}$. In this case, $C^0$ spline bases are asymptotically better than maximally smooth ones, although the constants are horrific. The differences are much less pronounced for the centered and decentered middle-cut configurations. For those cases, since the tip never coincides with a knot, the condition number always scales as $\eta^{-p}$. Similar trends were observed for the stiffness matrix (when ignoring zero eigenvalues associated with rigid-body motions). The scaling relations are further confirmed in \Cref{se: additional_examples} for a non-uniform grid.

\begin{figure}[H]
     \centering
     \begin{subfigure}[t]{0.31\textwidth}
    \centering
    \includegraphics[width=\textwidth]{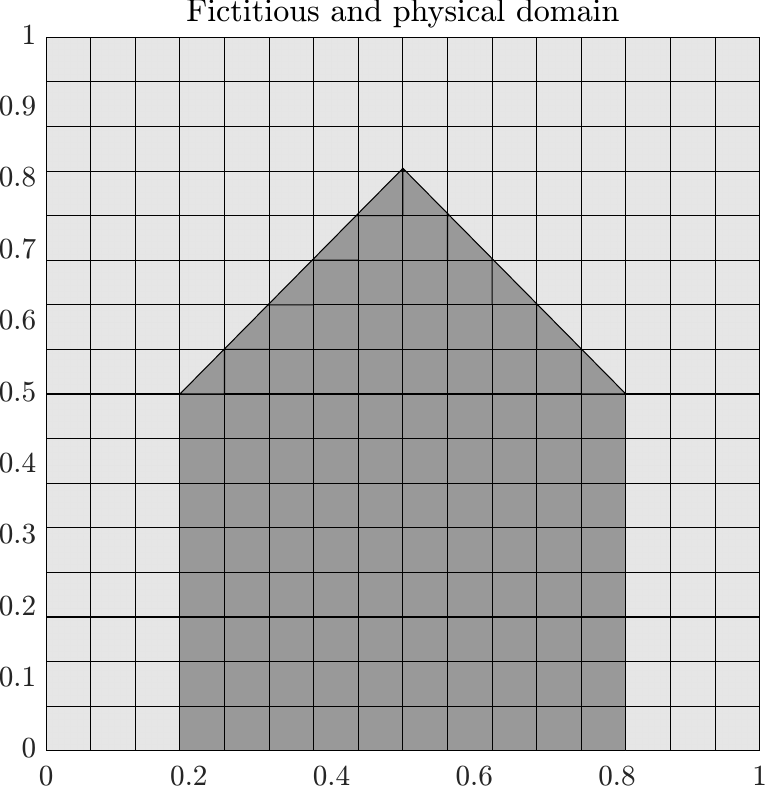}
    \caption{Corner-cut}
    \label{fig: 2D_Laplace_ridge_corner_mesh}
     \end{subfigure}
     \hfill
     \begin{subfigure}[t]{0.31\textwidth}
    \centering
    \includegraphics[width=\textwidth]{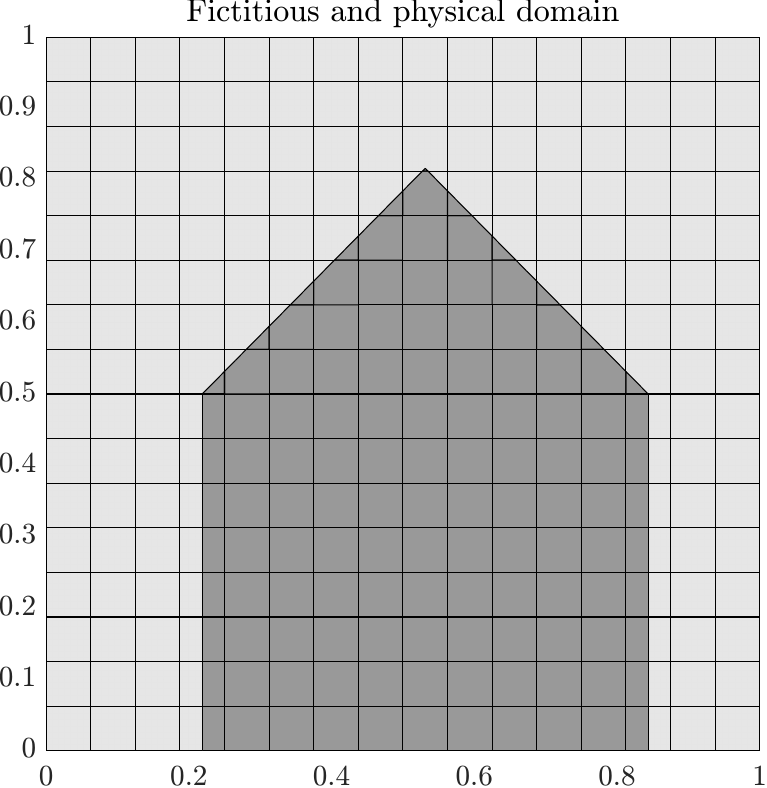}
    \caption{Centered middle-cut}
    \label{fig: 2D_Laplace_ridge_center_mesh}
     \end{subfigure}
     \hfill
    \begin{subfigure}[t]{0.31\textwidth}
    \centering
    \includegraphics[width=\textwidth]{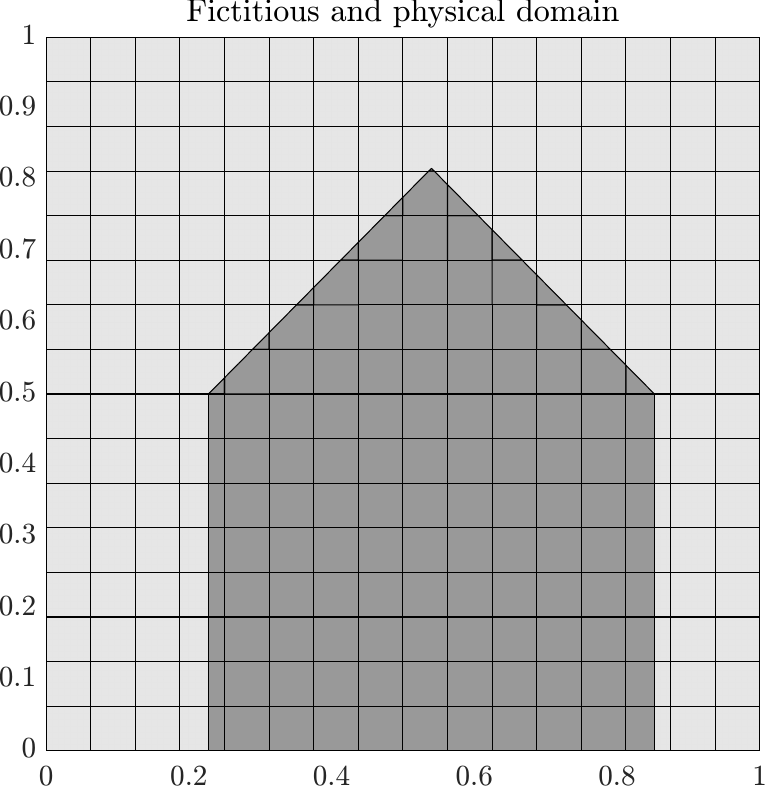}
    \caption{Decentered middle-cut}
    \label{fig: 2D_Laplace_ridge_decenter_mesh}
     \end{subfigure}
     \hfill
    \caption{House-like trimmed geometry with different cut configurations (\Cref{ex: ridge})}
    \label{fig: 2D_Laplace_ridge_mesh}
\end{figure}

\begin{figure}[H]
     \centering
     \begin{subfigure}[t]{0.31\textwidth}
    \centering
    \includegraphics[width=\textwidth]{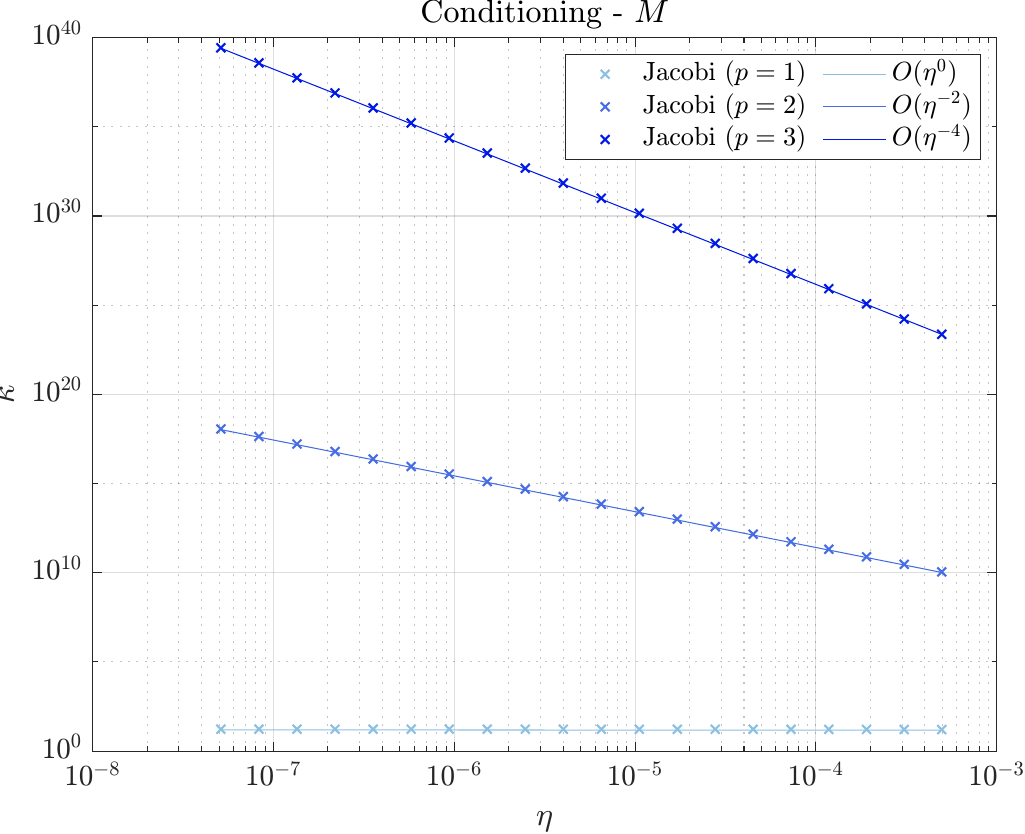}
    \caption{Corner-cut}
    \label{fig: 2D_Laplace_ridge_corner_cond_M_Jacobi_Lagrange}
     \end{subfigure}
     \hfill
     \begin{subfigure}[t]{0.31\textwidth}
    \centering
    \includegraphics[width=\textwidth]{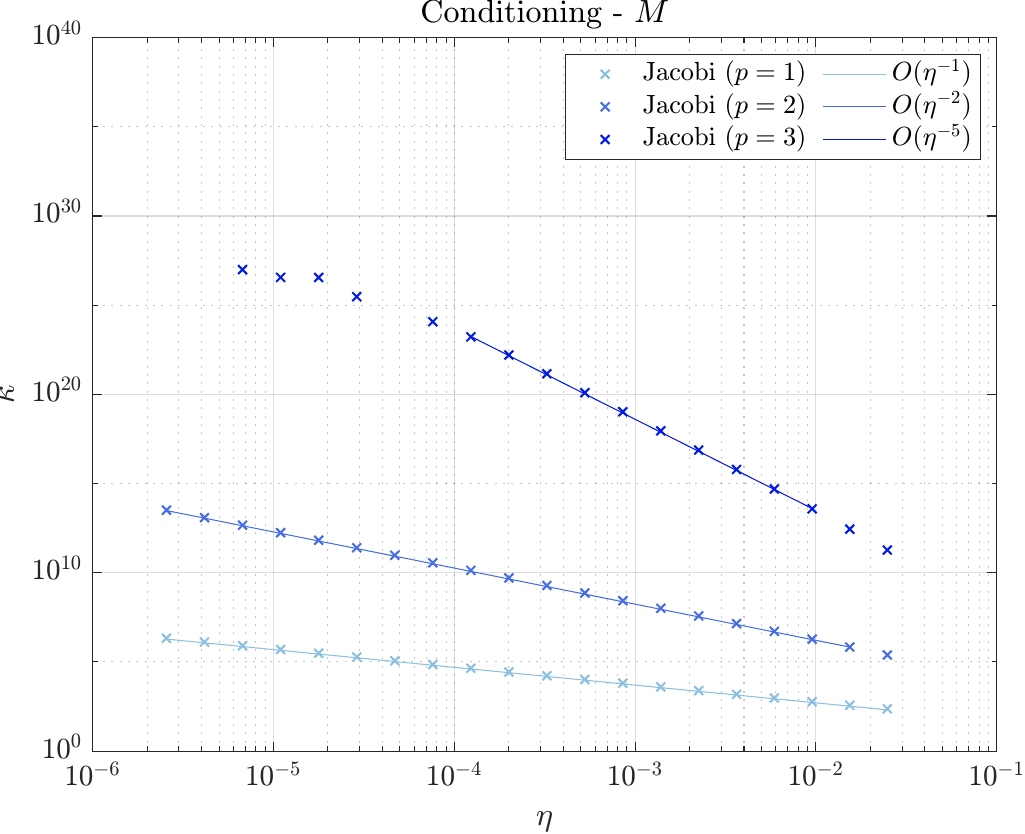}
    \caption{Centered middle-cut}
    \label{fig: 2D_Laplace_ridge_center_cond_M_Jacobi_Lagrange}
     \end{subfigure}
     \hfill
    \begin{subfigure}[t]{0.31\textwidth}
    \centering
    \includegraphics[width=\textwidth]{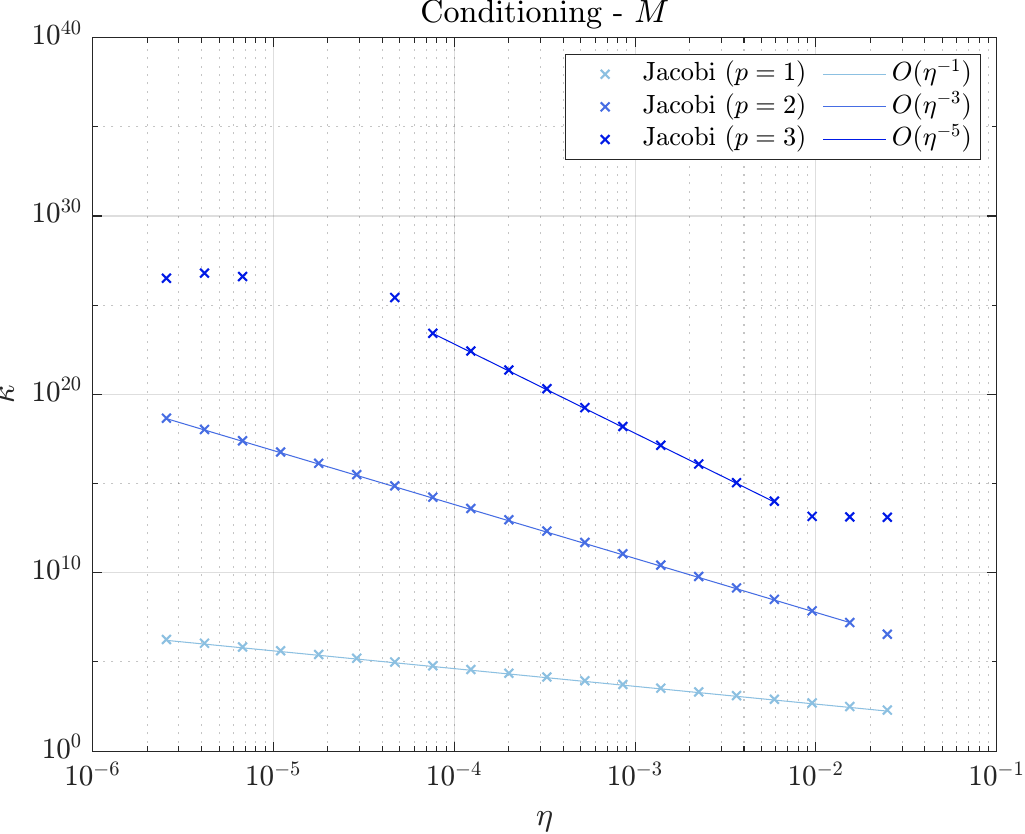}
    \caption{Decentered middle-cut}
    \label{fig: 2D_Laplace_ridge_decenter_cond_M_Jacobi_Lagrange}
     \end{subfigure}
     \hfill
    \caption{Lagrange basis (\Cref{ex: ridge})}
    \label{fig: 2D_Laplace_ridge_cond_M_Jacobi_Lagrange}
\end{figure}

\begin{figure}[H]
     \centering
     \begin{subfigure}[t]{0.31\textwidth}
    \centering
    \includegraphics[width=\textwidth]{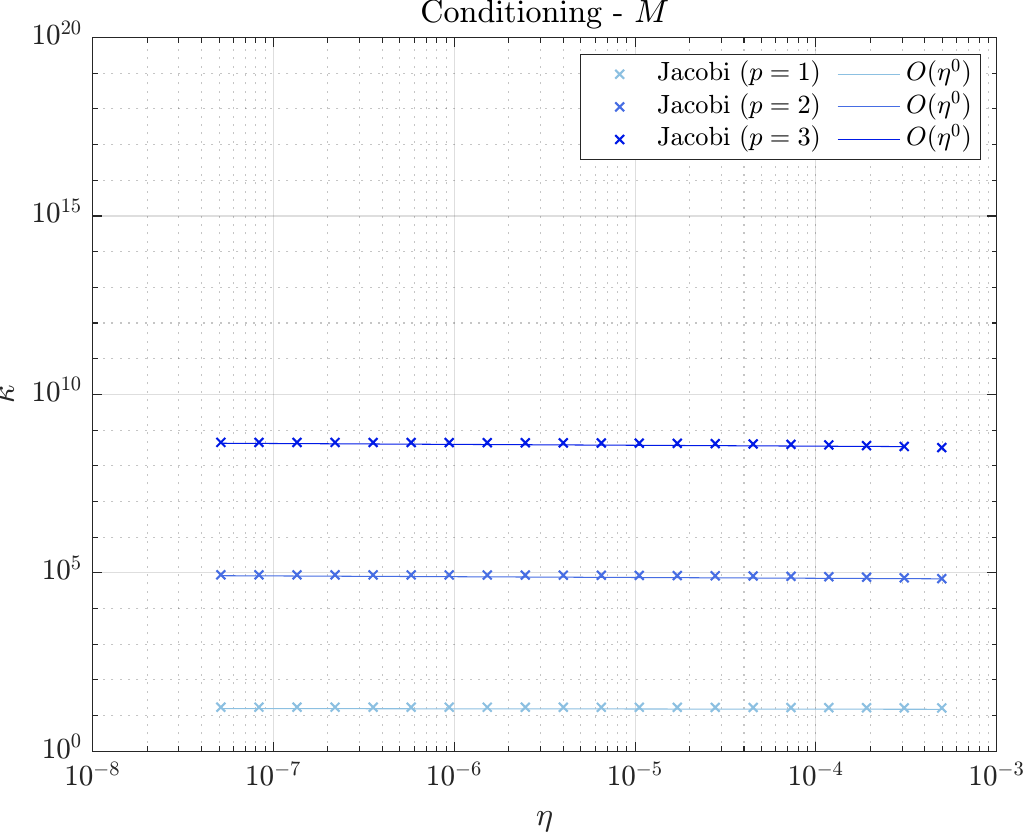}
    \caption{Corner-cut}
    \label{fig: 2D_Laplace_ridge_corner_cond_M_Jacobi_Bspline_C0}
     \end{subfigure}
     \hfill
     \begin{subfigure}[t]{0.31\textwidth}
    \centering
    \includegraphics[width=\textwidth]{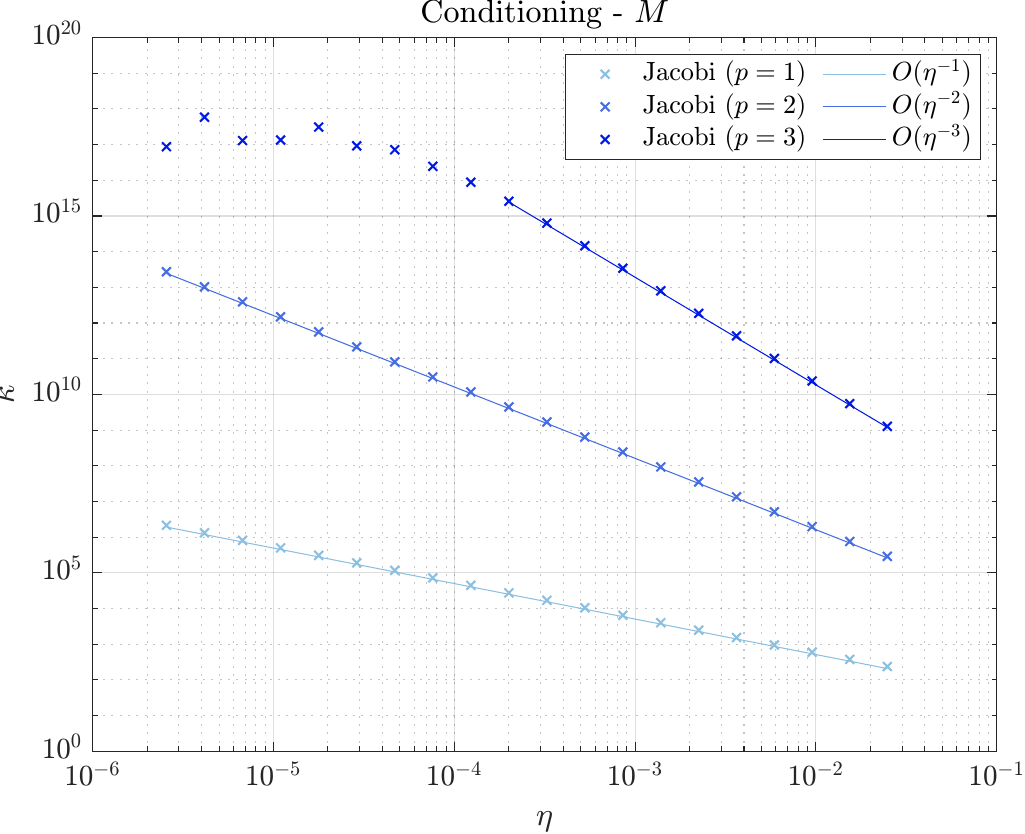}
    \caption{Centered middle-cut}
    \label{fig: 2D_Laplace_ridge_center_cond_M_Jacobi_Bspline_C0}
     \end{subfigure}
     \hfill
    \begin{subfigure}[t]{0.31\textwidth}
    \centering
    \includegraphics[width=\textwidth]{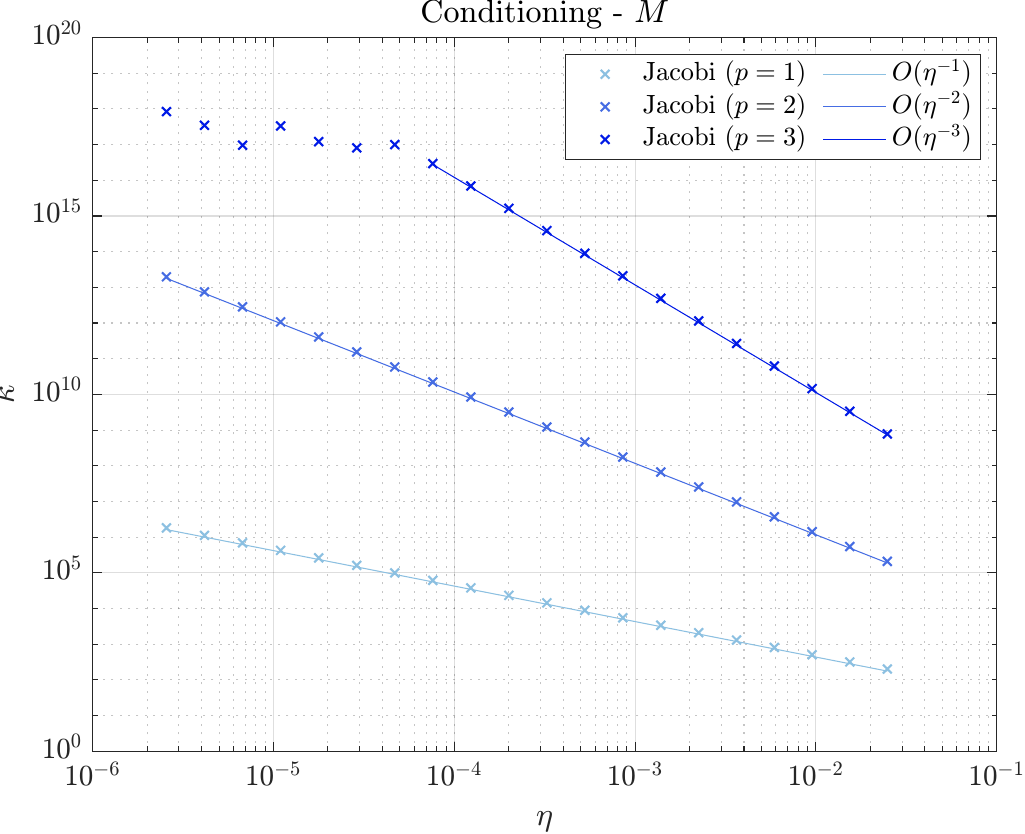}
    \caption{Decentered middle-cut}
    \label{fig: 2D_Laplace_ridge_decenter_cond_M_Jacobi_Bspline_C0}
     \end{subfigure}
     \hfill
    \caption{$C^{0}$ B-spline basis (\Cref{ex: ridge})}
    \label{fig: 2D_Laplace_ridge_cond_M_Jacobi_Bspline_C0}
\end{figure}

\begin{figure}[H]
     \centering
     \begin{subfigure}[t]{0.31\textwidth}
    \centering
    \includegraphics[width=\textwidth]{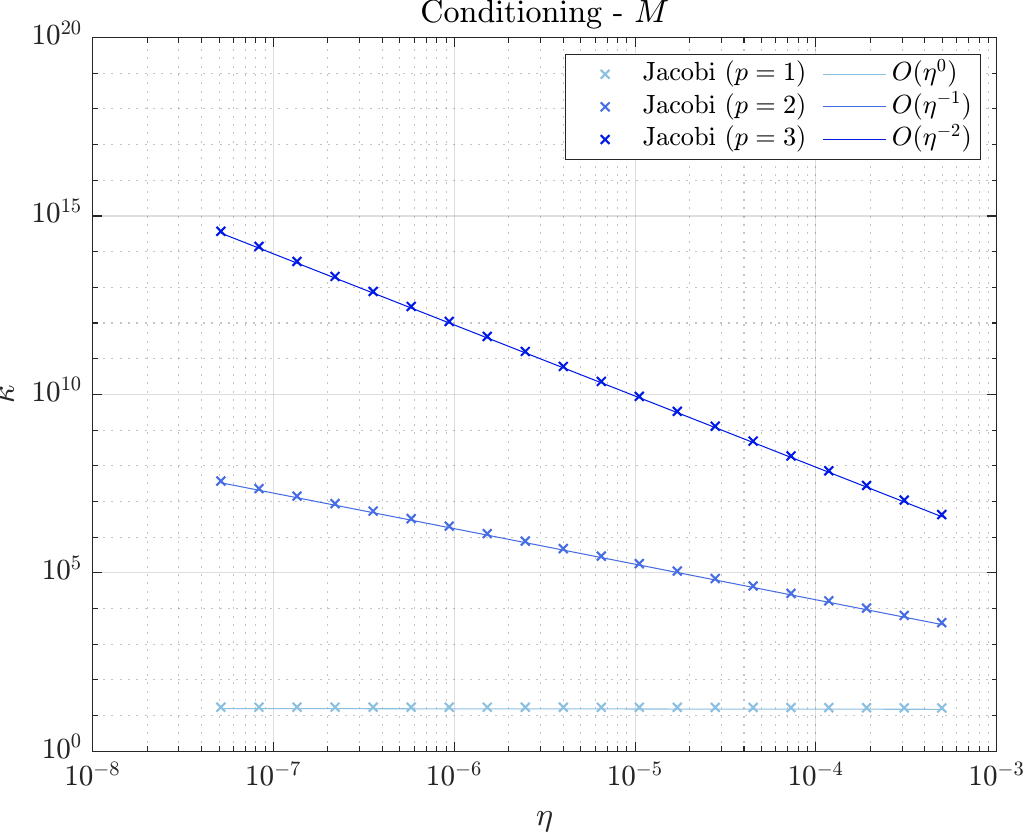}
    \caption{Corner-cut}
    \label{fig: 2D_Laplace_ridge_corner_cond_M_Jacobi_Bspline_Cp-1}
     \end{subfigure}
     \hfill
     \begin{subfigure}[t]{0.31\textwidth}
    \centering
    \includegraphics[width=\textwidth]{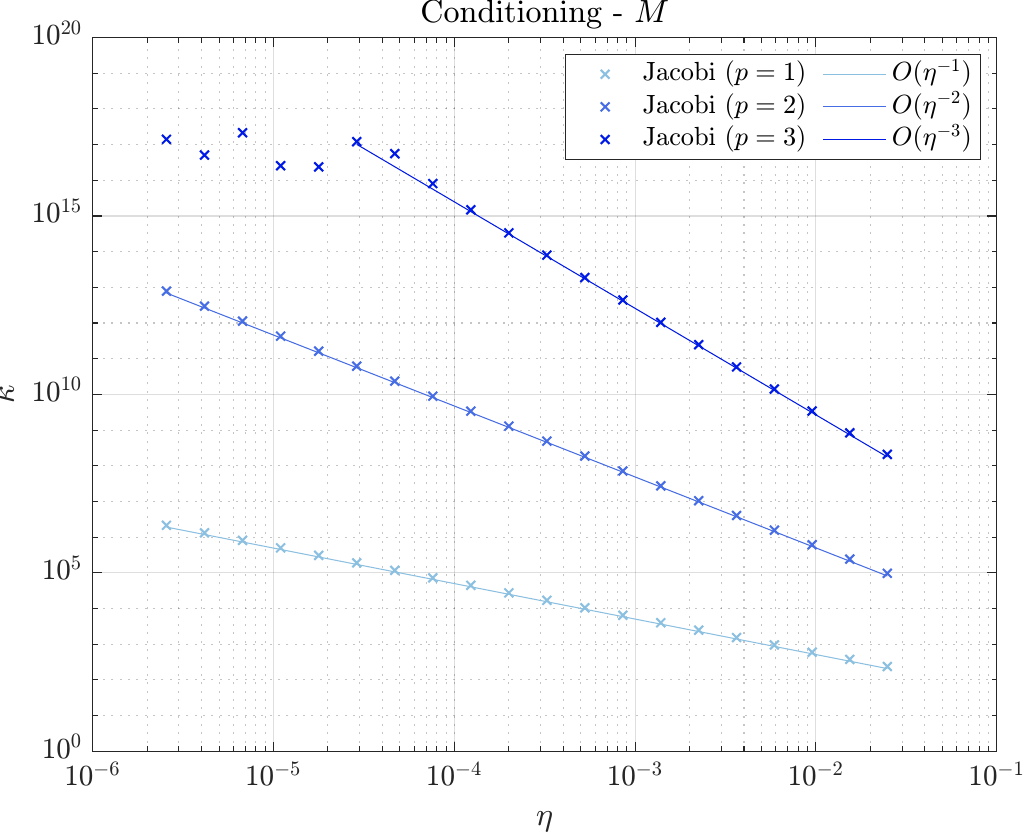}
    \caption{Centered middle-cut}
    \label{fig: 2D_Laplace_ridge_center_cond_M_Jacobi_Bspline_Cp-1}
     \end{subfigure}
     \hfill
    \begin{subfigure}[t]{0.31\textwidth}
    \centering
    \includegraphics[width=\textwidth]{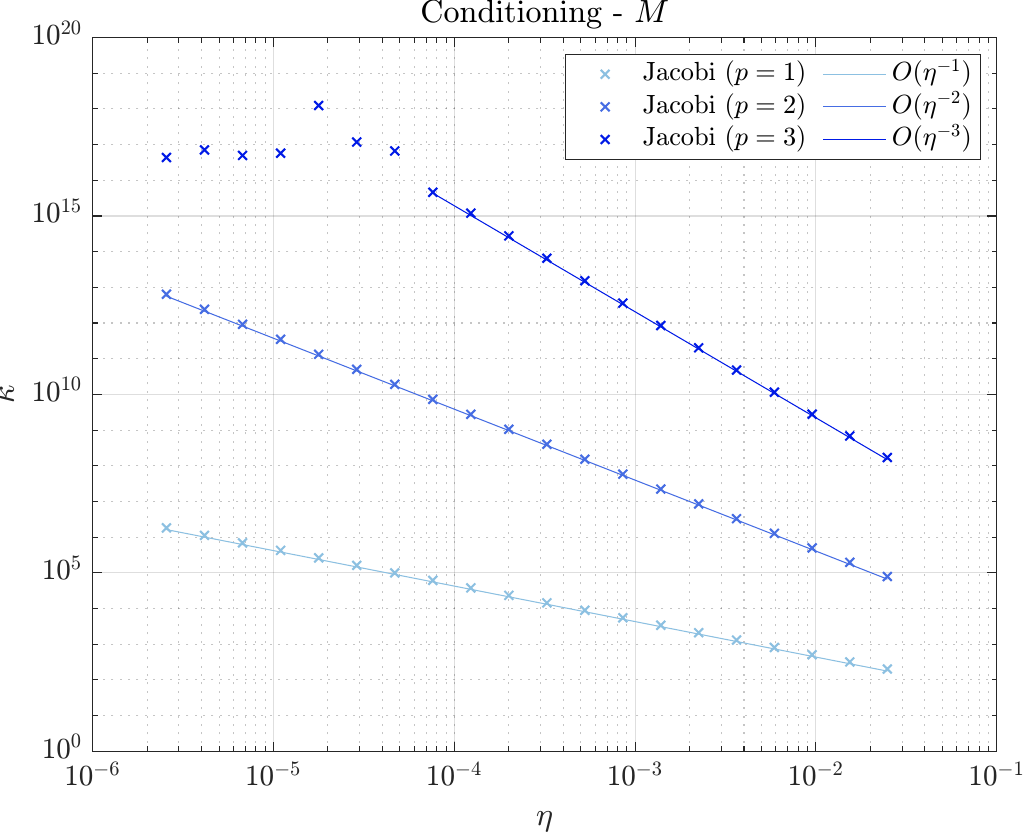}
    \caption{Decentered middle-cut}
    \label{fig: 2D_Laplace_ridge_decenter_cond_M_Jacobi_Bspline_Cp-1}
     \end{subfigure}
     \hfill
    \caption{$C^{p-1}$ B-spline basis (\Cref{ex: ridge})}
    \label{fig: 2D_Laplace_ridge_cond_M_Jacobi_Bspline_Cp-1}
\end{figure}
\end{example}

These examples confirm that diagonal scaling alone is insufficient and must be combined with other forms of preconditioning. The next couple of sections review some of the solutions that have been proposed.

\subsection{Symmetric Incomplete Permuted Inverse Cholesky (SIPIC) preconditioner}
\label{se: sipic}
In 2017, de Prenter et al.\ \cite{de2017condition} presented the Symmetric Incomplete Permuted Inverse Cholesky (SIPIC) preconditioner, an approximate inverse preconditioner for SPD matrices. This technique combines diagonal scaling with local orthonormalizations of nearly linearly dependent basis functions. The strategy for detecting near linear dependencies is based on the magnitude of the off-diagonal entries in the diagonally preconditioned matrix. More specifically, given a threshold $\zeta \in [0,1]$, a pair of basis functions $\{\hat{\varphi}_i,\hat{\varphi}_j\}$ is marked as nearly linearly dependent if
\begin{equation*}
    |a(\hat{\varphi}_i,\hat{\varphi}_j)| > \zeta.
\end{equation*}
From the pairs of indices of nearly linearly dependent functions
\begin{equation*}
    \mathcal{I}=\{(i,j) \colon i>j, \ |a(\hat{\varphi}_i,\hat{\varphi}_j)| > \zeta\},
\end{equation*}
we form a graph $G$ whose vertex set is $V(G)=\{1,\dots,n\}$ and whose edge set is $E(G)=\mathcal{I}$. The connected components of $G$ form non-intersecting sets of indices. The basis functions corresponding to indices within each set are then locally orthonormalized in the energy norm with Gram-Schmidt \cite{golub2013matrix} to produce a new basis $\hat{\Phi}^{(1)}$. However, local orthonormalizations may potentially introduce near linear dependencies elsewhere. Let $\mathcal{I}^{(1)}$ denote the updated set of index pairs obtained by detecting near linear dependencies in $\hat{\Phi}^{(1)}$. If $\mathcal{I}^{(1)} = \emptyset$, the procedure ends there. Otherwise, local orthonormalizations are carried out, and the procedure is repeated until (hopefully) converging to a basis $\Phi^{(k)}$ satisfying $\mathcal{I}^{(k)} = \emptyset$. In \cite{de2017condition}, the authors report convergence in at most two iterations and compelling numerical results. Unfortunately though, the strategy for detecting near linear dependencies is mathematically flawed. Consider the following minimal counter-example for a set of vectors $\{\bm{u}_1,\bm{u}_2,\bm{u}_3\} \subseteq \mathbb{R}^3$ stored along the columns of $U=[\bm{u}_1,\bm{u}_2,\bm{u}_3]$ where
\begin{equation*}
    \bm{u}_1=
    \begin{pmatrix}
        1 \\
        0 \\
        0
    \end{pmatrix},
    \bm{u}_2=
    \begin{pmatrix}
        0 \\
        1 \\
        0
    \end{pmatrix},
    \bm{u}_3=\frac{1}{\sqrt{2}}
    \begin{pmatrix}
        1 \\
        1 \\
        0
    \end{pmatrix}.
\end{equation*}
This set is obviously linearly dependent since $\bm{u}_1+\bm{u}_2=\sqrt{2}\bm{u}_3$. However, it will never be detected from its Gram matrix
\begin{equation*}
    U^TU=
    \begin{pmatrix}
        1 & 0 & \frac{1}{\sqrt{2}} \\
        0 & 1 & \frac{1}{\sqrt{2}} \\
        \frac{1}{\sqrt{2}} & \frac{1}{\sqrt{2}} & 1
    \end{pmatrix}
\end{equation*}
whose off-diagonal entries $\frac{1}{\sqrt{2}} \approx 0.7071$ are nowhere near $1$ in magnitude. Indeed, (near) linear dependencies in a set of $m \geq 3$ functions may arise without necessarily \emph{pairwise} dependencies, which are only necessary and sufficient for $m=2$. Although the shortcomings of SIPIC were already acknowledged in \cite[Remark 3.1]{de2019preconditioning}, explicit counter-examples were never provided, neither in this reference nor in follow-up work. Yet, this problem is not merely theoretical and may arise in very simple settings, as shown in the next counter-examples. Following the description of SIPIC in \cite{de2017condition}, we chose $\zeta = 0.9$ as default threshold in all our examples. Moreover, as explained in \Cref{se: ill_conditioning_origin}, the weak imposition of Dirichlet boundary conditions is not the main cause of ill-conditioning. Therefore, unless specified otherwise, Neumann boundary conditions are prescribed on all boundaries, and we simply ignore the zero eigenvalues of the stiffness tied to rigid-body modes.

\begin{example}[Rotated square with a hole]
\label{ex: square_with_hole}
This counter-example is a minor variation of an example that inspired the design of SIPIC itself \cite{de2017condition}: a rotated square with a hole in the middle\footnote{The authors thank Frits de Prenter who inspired this example.}. The quadratic Lagrange basis constructed over the background mesh in \Cref{fig: 2D_Laplace_square_with_hole_mesh} perfectly mimics the original example in \cite[Fig. 2]{de2017condition}. In fact, the only difference is that instead of rotating the square around its center, we slightly enlarge the radius $r=\sqrt{5}h-\delta$ of the hole by reducing $\delta$ and therefore the size of the smallest trimmed element, as shown in \Cref{fig: 2D_Laplace_square_with_hole_mesh_detail}. Unfortunately, the condition number of the SIPIC preconditioned matrix now tends to increase (\Cref{fig: 2D_Laplace_square_with_hole_cond_Lagrange_p2}), although it still reduces the quadratic growth of the condition number for the Jacobi preconditioner to a linear one. The reason is that it successfully eliminates the smallest eigenvalue but not the second smallest one, which still converges to zero, albeit at a milder linear rate. Unfortunately, future counter-examples will show that this is not always the case and SIPIC may not improve the rate at all.

\begin{figure}[H]
     \centering
     \begin{subfigure}[t]{0.48\textwidth}
    \centering
    \includegraphics[width=\textwidth]{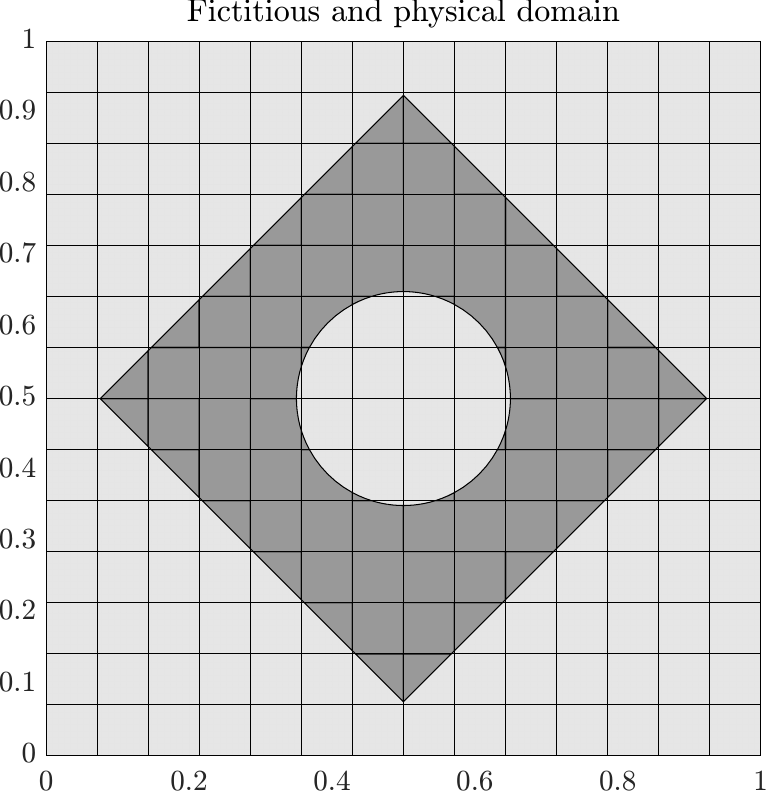}
    \caption{Geometry and background mesh}
    \label{fig: 2D_Laplace_square_with_hole_mesh}
     \end{subfigure}
     \hfill
     \begin{subfigure}[t]{0.48\textwidth}
    \centering
    \includegraphics[width=\textwidth]{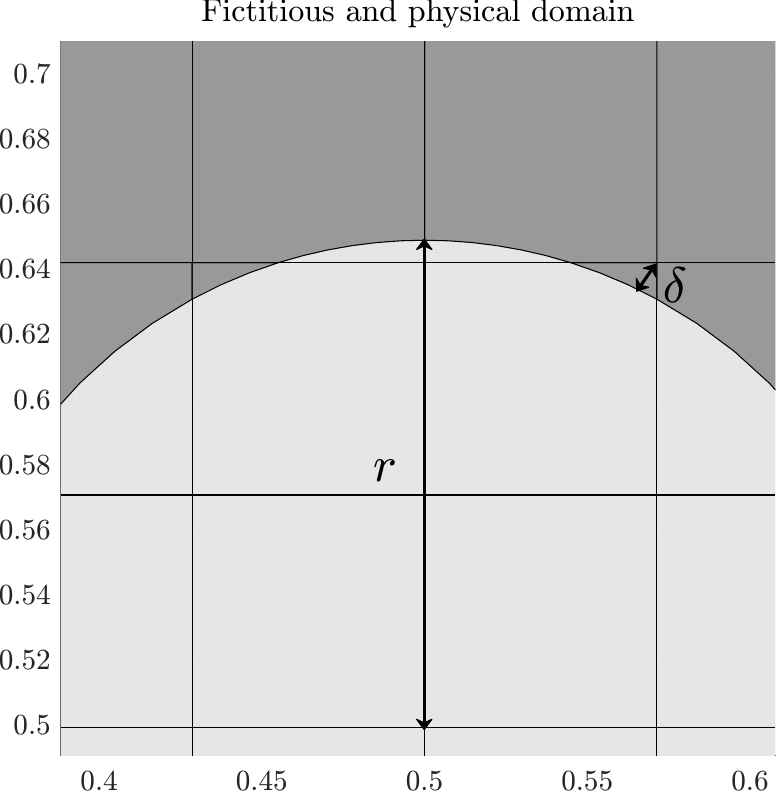}
    \caption{Detail of the hole}
    \label{fig: 2D_Laplace_square_with_hole_mesh_detail}
     \end{subfigure}
     \hfill
    \caption{Rotated square with a hole (\Cref{ex: square_with_hole})}
    \label{fig: 2D_Laplace_square_with_hole}
\end{figure}

\begin{figure}[H]
     \centering
     \begin{subfigure}[t]{0.48\textwidth}
    \centering
    \includegraphics[width=\textwidth]{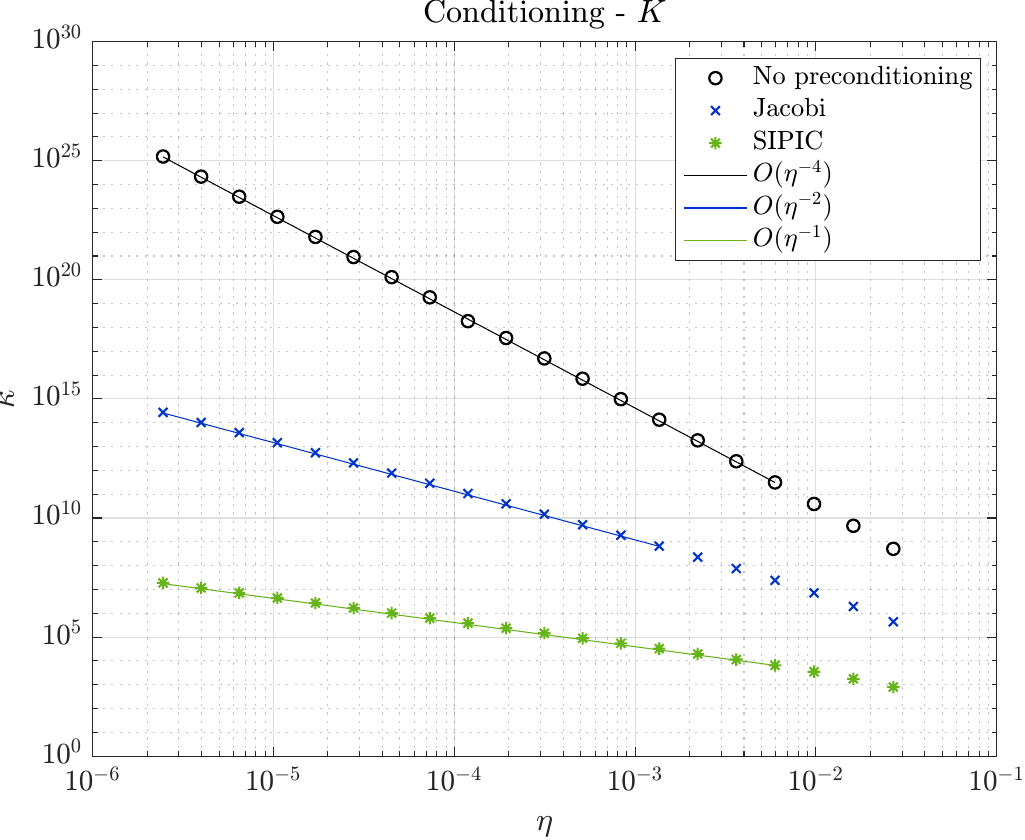}
    \caption{Stiffness matrix}
    \label{fig: 2D_Laplace_square_with_hole_cond_K_Lagrange_p2}
     \end{subfigure}
     \hfill
     \begin{subfigure}[t]{0.48\textwidth}
    \centering
    \includegraphics[width=\textwidth]{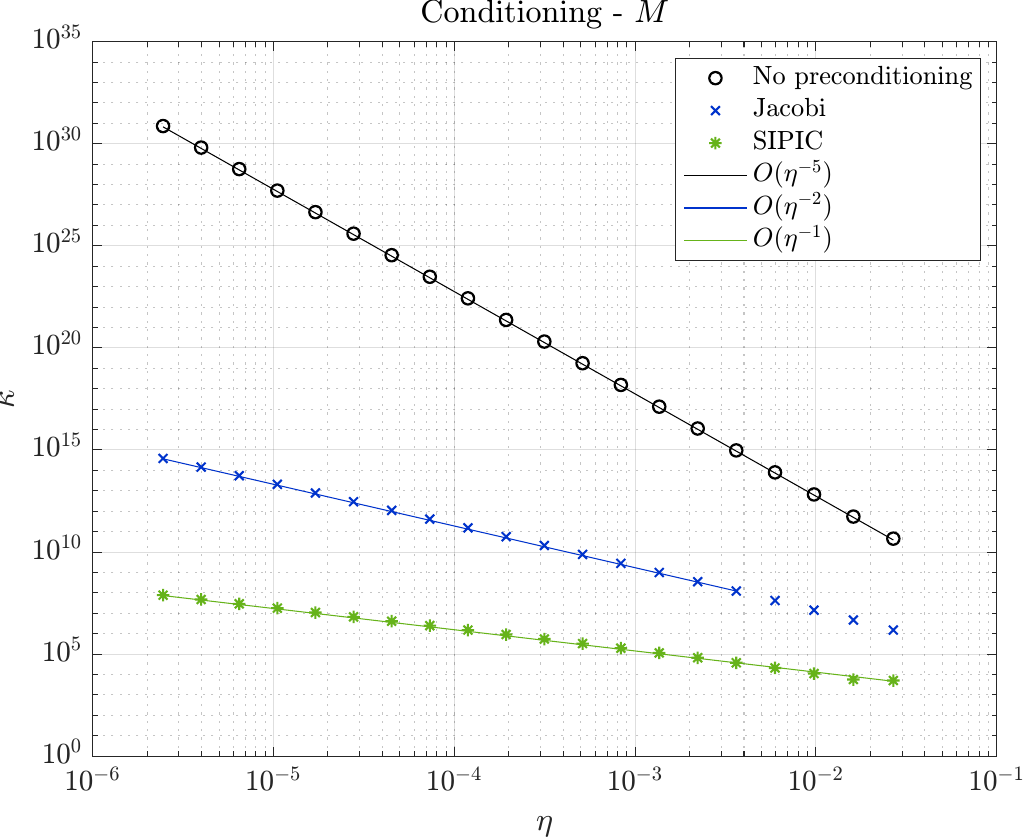}
    \caption{Mass matrix}
    \label{fig: 2D_Laplace_square_with_hole_cond_M_Lagrange_p2}
     \end{subfigure}
     \hfill
    \caption{Conditioning of the stiffness and mass matrices for the quadratic Lagrange basis (\Cref{ex: square_with_hole})}
    \label{fig: 2D_Laplace_square_with_hole_cond_Lagrange_p2}
\end{figure}    
\end{example}

Alas, SIPIC may also break down for (smooth) spline bases, as shown in the next counter-example.

\begin{example}[Two ridges]
\label{ex: 2ridges}
We consider a slight modification of the house-like geometry of \Cref{ex: ridge} by splitting the roof's ridge into two ridges separated by a distance $h$ in \Cref{fig: 2D_Laplace_2tips_mesh_C0} and $3h$ in \Cref{fig: 2D_Laplace_2tips_mesh_Cp-1}, while the ridges' tips are at a distance $\delta$ from the nearest horizontal grid line. In the first case, we build a cubic Bernstein basis over the background mesh while in the second case we consider a maximally smooth cubic B-spline basis. Finally, Dirichlet boundary conditions are prescribed along the base and Neumann boundary conditions elsewhere. In both the $C^0$ (\Cref{fig: 2D_Laplace_2tips_cond_K_Bspline_p3_C0}) and $C^{p-1}$ (\Cref{fig: 2D_Laplace_2tips_cond_K_Bspline_p3_Cp-1}) cases, the condition number of the SIPIC preconditioned stiffness matrix increases at the same rate as the Jacobi preconditioner, although with a smaller constant. This rate matches the one predicted in \Cref{tab: scaling_jacobi}. Similar results are obtained for the mass matrix and are omitted. Although these counter-examples were obviously intended to highlight SIPIC's flaws, they are not completely far-fetched and could arise for an unfortunate placement of the underlying grid.

\begin{figure}[H]
     \centering
     \begin{subfigure}[t]{0.48\textwidth}
    \centering
    \includegraphics[width=\textwidth]{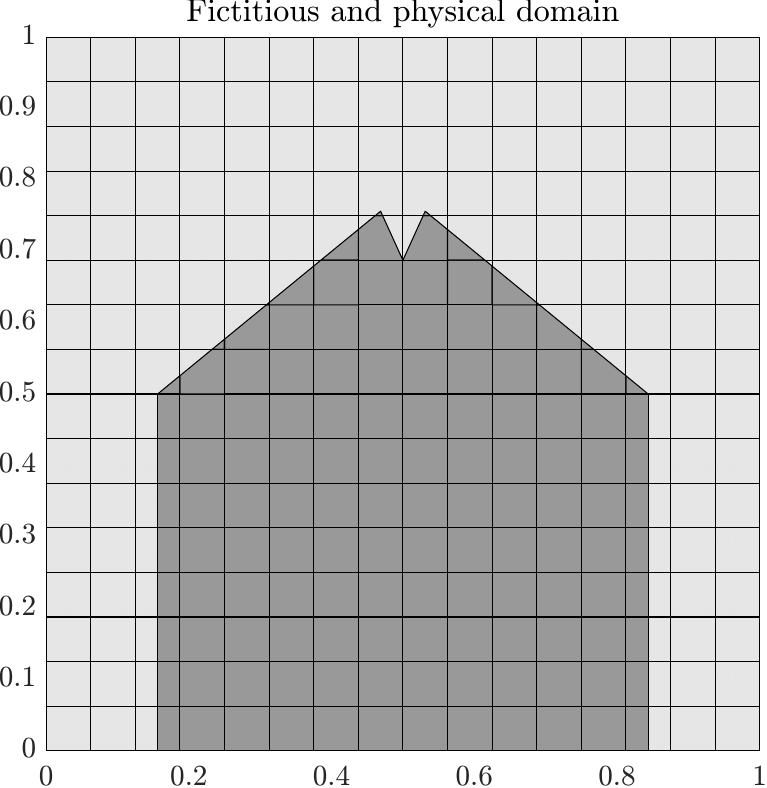}
    \caption{Cubic $C^0$ continuity}
    \label{fig: 2D_Laplace_2tips_mesh_C0}
     \end{subfigure}
     \hfill
     \begin{subfigure}[t]{0.48\textwidth}
    \centering
    \includegraphics[width=\textwidth]{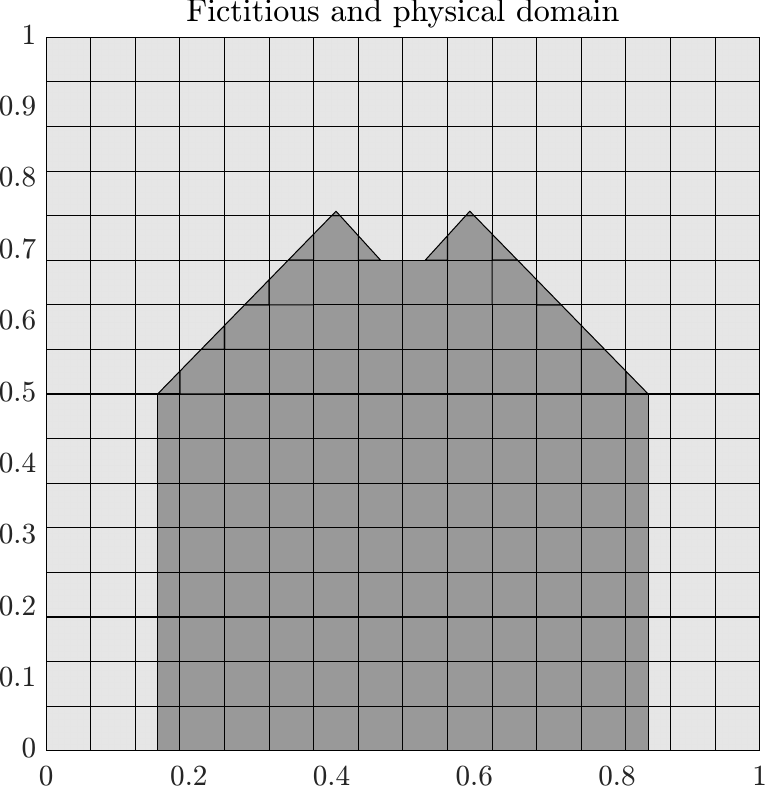}
    \caption{Cubic $C^2$ continuity}
    \label{fig: 2D_Laplace_2tips_mesh_Cp-1}
     \end{subfigure}
     \hfill
    \caption{Geometries for counter-examples (\Cref{ex: 2ridges})}
    \label{fig: 2D_Laplace_2tips_mesh}
\end{figure}

\begin{figure}[H]
     \centering
     \begin{subfigure}[t]{0.48\textwidth}
    \centering
    \includegraphics[width=\textwidth]{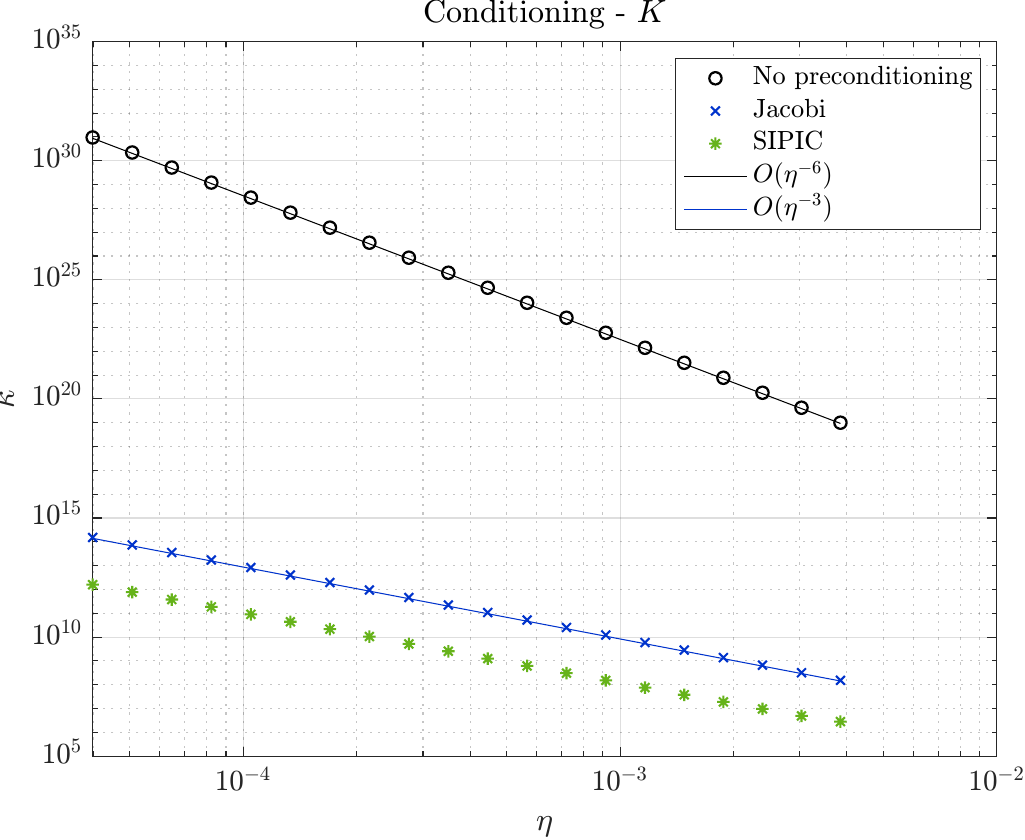}
    \caption{Cubic $C^0$ continuity}
    \label{fig: 2D_Laplace_2tips_cond_K_Bspline_p3_C0}
     \end{subfigure}
     \hfill
     \begin{subfigure}[t]{0.48\textwidth}
    \centering
    \includegraphics[width=\textwidth]{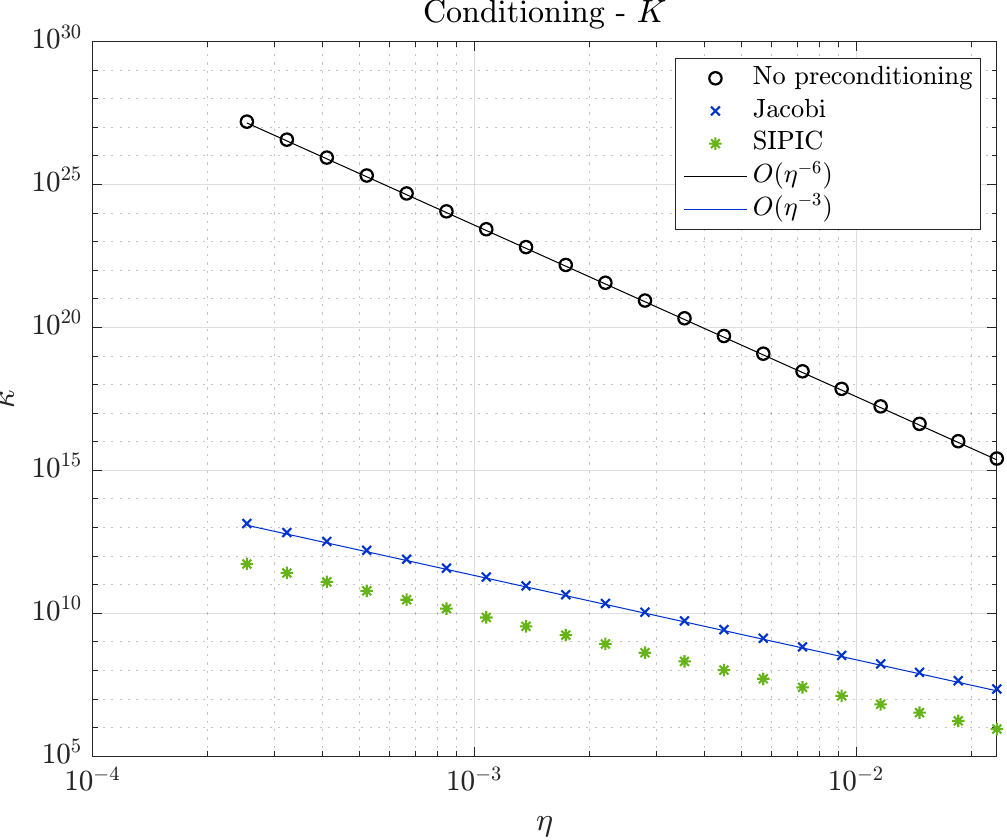}
    \caption{Cubic $C^2$ continuity}
    \label{fig: 2D_Laplace_2tips_cond_K_Bspline_p3_Cp-1}
     \end{subfigure}
     \hfill
    \caption{Conditioning of the stiffness matrix for the B-spline basis (\Cref{ex: 2ridges})}
    \label{fig: 2D_Laplace_2tips_cond_Bspline_p3_C0}
\end{figure}    
\end{example}

\subsection{Schwarz preconditioners}
A couple of years after SIPIC, de Prenter et al.\ proposed a different approximate inverse preconditioner in the form of an additive \cite{de2019preconditioning} or multiplicative \cite{de2020multigrid} Schwarz preconditioner. Given $N$ (potentially overlapping) index blocks $\{\mathcal{K}_i\}_{i=1}^N$, the preconditioning matrices are defined as
\begin{align*}
    S &= \sum_{i=1}^N S_i = \sum_{i=1}^N P_i(P_i^TAP_i)^{-1}P_i^T & &\text{(Additive Schwarz)},\\
    S &= \sum_{i=1}^N S_i \prod_{j=1}^{i-1}(I-AS_j) & &\text{(Multiplicative Schwarz)},
\end{align*}
where $S_i=P_i(P_i^TAP_i)^{-1}P_i^T$ and $P_i=[\bm{e}_{\mathcal{K}_i(1)},\dots,\bm{e}_{\mathcal{K}_i(m)}]$ contains the columns of the identity matrix for the indices of the $i$th index block $\mathcal{K}_i$. Contrary to other preconditioning techniques, the Schwarz method handles more general matrices provided the submatrices $A_i=P_i^TAP_i$ are invertible. This is obviously the case if $A$ is SPD. Moreover, in order to ensure the invertibility of the preconditioning matrix $S$, index blocks containing single indices are formed for basis functions that are not supported on any cut element such that $S_i=P_i(P_i^TAP_i)^{-1}P_i^T$ merely inverts the corresponding diagonal entry in $A$. Nevertheless, due to the ill-conditioning of the submatrices $A_i$ entering the definition of $S$, one should avoid explicitly forming the preconditioning matrix and instead define it implicitly through matrix-vector multiplications. Those operations only require solving small linear systems with the submatrices $A_i$, which is more stable than forming their inverse. Alternatively, one may approximately form the inverses of $A_i$ by truncating their eigendecompositions; see \cite{jomo2019robust}. To mitigate this last issue, we suggest combining the Jacobi and Schwarz preconditioners by forming the Schwarz preconditioner from the Jacobi preconditioned matrix. This strategy drastically helps stabilize the computations since the submatrices $A_i$ are now those of the diagonally scaled matrix, which is already much better conditioned (see \Cref{se: diagonal_scaling}).

However, the quality of the preconditioner critically depends on the strategy for selecting the index blocks, and, unfortunately, the community has not agreed on the strategy to adopt \cite{de2023stability}. Some of the schemes proposed in the literature are summarized below. 
\begin{itemize}[noitemsep]
    \item In \cite{de2019preconditioning}, the authors suggested forming index blocks from the indices of the basis functions intersecting on cut elements. The subset of cut elements is defined as
    \begin{equation*}
        \mathcal{T}_C = \{T \in \mathcal{T}_h \colon |T \cap \Omega | < |T| \} \subseteq \mathcal{T}_h.
    \end{equation*}
    An index block $\mathcal{K}_i$ is then devised for each cut element $T_i \in \mathcal{T}_C$ such that
    \begin{equation}
    \label{eq: block_sel_cut_elem}
        \mathcal{K}_i = \{j \colon T_i \subseteq \supp(\varphi_j) \}.
    \end{equation}
    Several variants of this approach were also suggested for multi-level $h,p$-refined discretizations \cite{jomo2019robust,jomo2021hierarchical}. In \cite{jomo2019robust}, index blocks were only devised for small cut elements and further truncated to selected degrees of freedom, whereas in \cite{jomo2021hierarchical} a patchwise strategy was suggested by regrouping elements around certain nodes.
    \item Instead, the authors in \cite{de2020multigrid} formed index blocks for each trimmed basis function based on support containment or intersection. For instance, in \cite{de2020multigrid}, the index block associated with $\varphi_i$ is defined as
    \begin{equation}
    \label{eq: block_sel_supp_cont}
        \mathcal{K}_i = \{j \colon \supp_{\Omega}(\varphi_j) \subseteq \supp_{\Omega}(\varphi_i)\}
    \end{equation}
    where $\supp_{\Omega}(\varphi)=\supp(\varphi_i) \cap \Omega$ denotes the active support. A related but more stringent criterion is based on intersecting supports \cite{de2023stability}:
    \begin{equation}
    \label{eq: block_sel_supp_int}
        \mathcal{K}_i = \{j \colon \supp_{\Omega}(\varphi_j) \cap \supp_{\Omega}(\varphi_i) \neq \emptyset\}.
    \end{equation}
\end{itemize}
Unfortunately, the next counter-example shows that none of these strategies are perfectly robust.

\begin{example}[Three ridges]
\label{ex: 3ridges}
We now generalize \Cref{ex: 2ridges} to three ridges by duplicating the configuration in \Cref{fig: 2D_Laplace_2tips_mesh_Cp-1}\footnote{The authors thank Frits de Prenter who inspired this example.}. A smooth cubic B-spline basis is constructed over the background mesh shown in \Cref{fig: 2D_Laplace_3tips_mesh_Cp-1}. In addition to SIPIC, \Cref{fig: 2D_Laplace_3tips_cond_Bspline_p3_Cp-1} shows the condition number for the additive Schwarz preconditioner for the block selection strategies based on cut elements \eqref{eq: block_sel_cut_elem} and intersecting supports \eqref{eq: block_sel_supp_int}. While the block selection strategy based on cut elements performs just as poorly as SIPIC, the one based on intersecting supports reduces the condition number to a linear growth. Thus, while advanced block selection strategies certainly help, none of them are perfectly robust to this date. Although we have not tested the multiplicative Schwarz technique, it generally cannot be expected to succeed when the additive one fails. It is also more expensive to apply than the additive version \cite{de2020multigrid}.

\begin{figure}[H]
     \centering
     \begin{subfigure}[t]{0.48\textwidth}
    \centering
    \includegraphics[width=\textwidth]{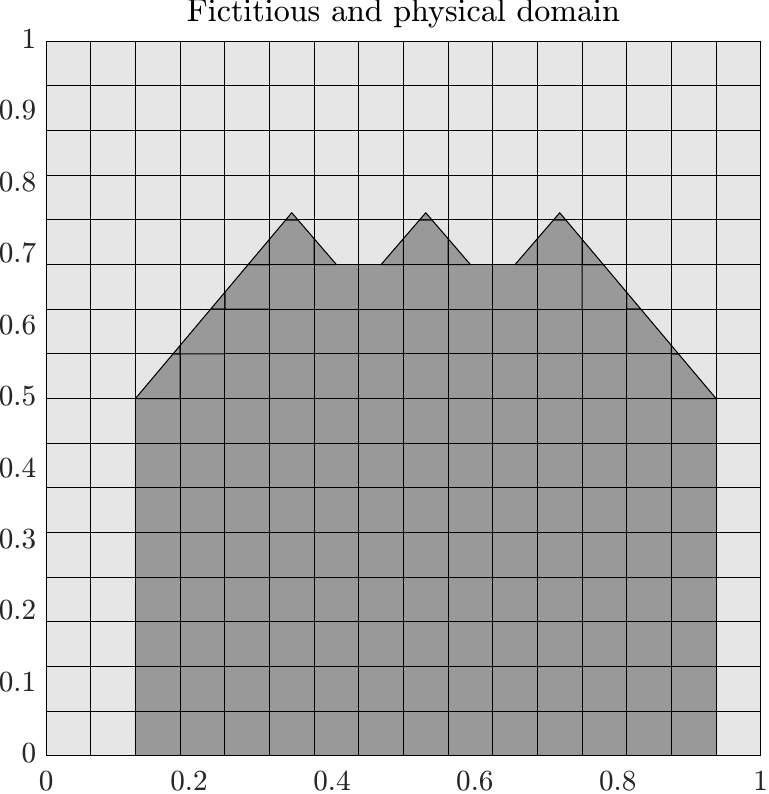}
    \caption{Geometry and background mesh}
    \label{fig: 2D_Laplace_3tips_mesh_Cp-1}
     \end{subfigure}
     \hfill
     \begin{subfigure}[t]{0.48\textwidth}
    \centering
    \includegraphics[width=\textwidth]{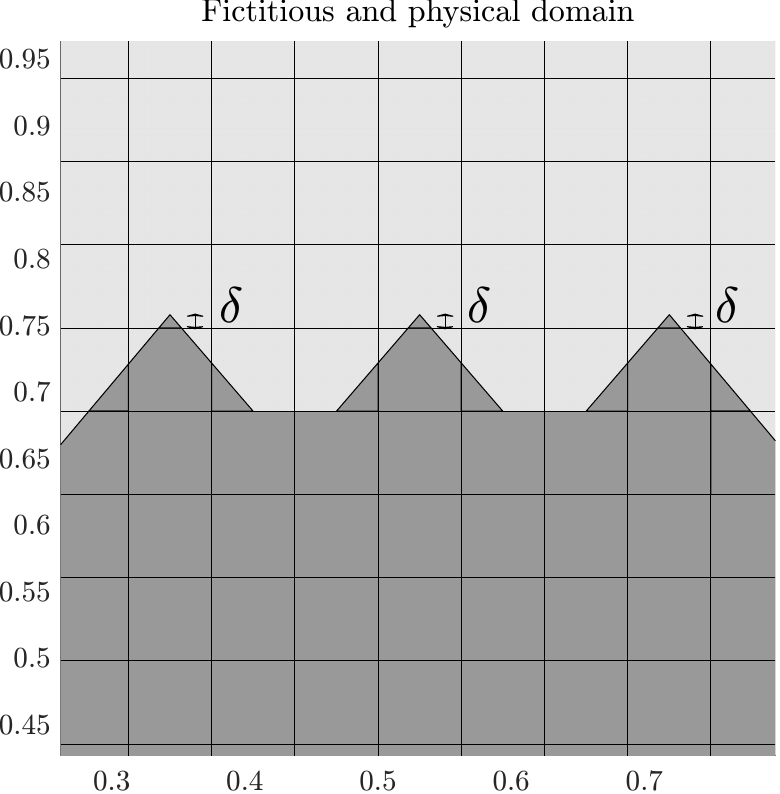}
    \caption{Detail of the ridges}
    \label{fig: 2D_Laplace_3tips_mesh_Cp-1_detail}
     \end{subfigure}
     \hfill
    \caption{Three ridges (\Cref{ex: 3ridges})}
    \label{fig: 2D_Laplace_3tips_mesh}
\end{figure} 

\begin{figure}[H]
     \centering
     \begin{subfigure}[t]{0.48\textwidth}
    \centering
    \includegraphics[width=\textwidth]{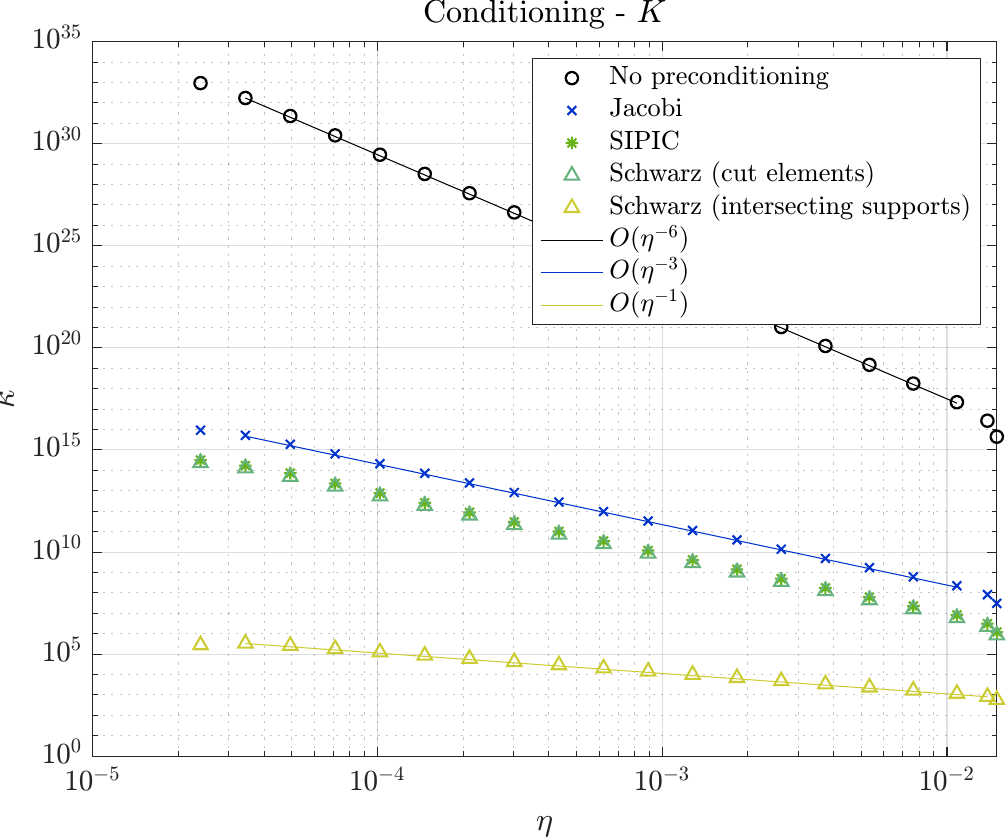}
    \caption{Stiffness matrix}
    \label{fig: 2D_Laplace_3tips_cond_K_Bspline_p3_Cp-1}
     \end{subfigure}
     \hfill
     \begin{subfigure}[t]{0.48\textwidth}
    \centering
    \includegraphics[width=\textwidth]{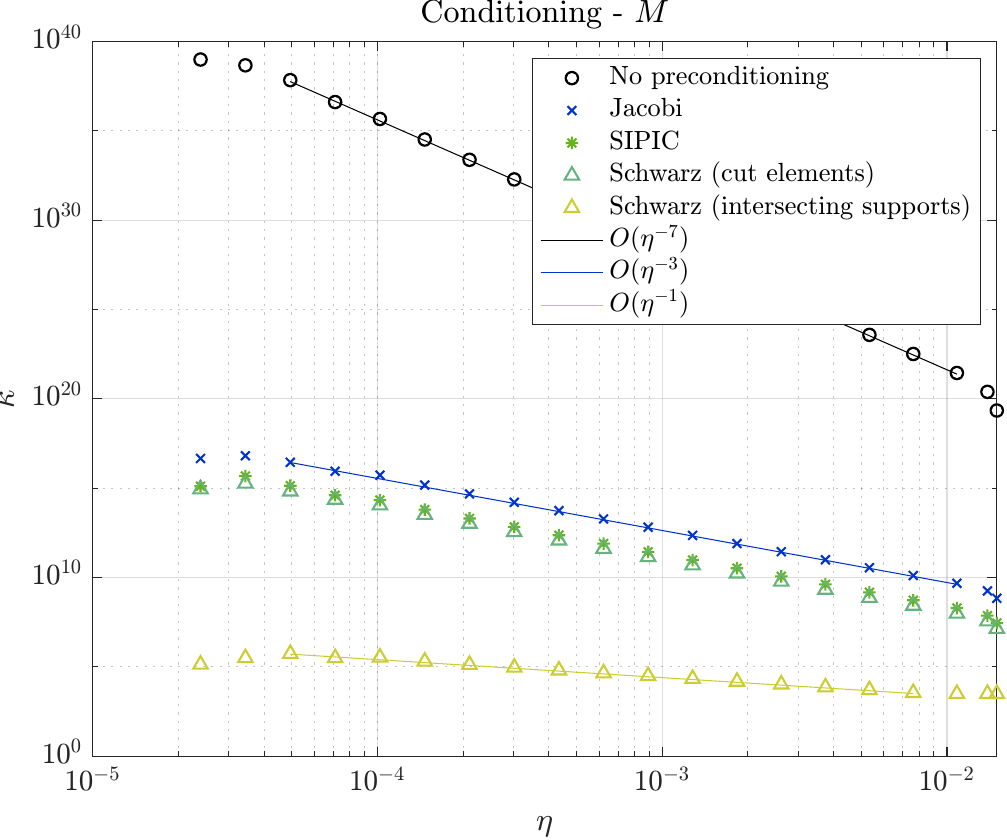}
    \caption{Mass matrix}
    \label{fig: 2D_Laplace_3tips_cond_M_Bspline_p3_Cp-1}
     \end{subfigure}
     \hfill
    \caption{Conditioning of the stiffness and mass matrices for the cubic $C^2$ B-spline basis (\Cref{ex: 3ridges})}
    \label{fig: 2D_Laplace_3tips_cond_Bspline_p3_Cp-1}
\end{figure}    
\end{example}

Our readers must not draw general conclusions from those counter-examples. In the vast majority of cases, both the SIPIC and Schwarz preconditioners perform satisfactorily. Our counter-examples are not meant to discredit them but only to highlight their flaws. In fact, once those flaws have been identified, one can easily construct counter-examples for any given mesh size, smoothness, and spline order. Although artificial, they are not as fanciful as one would expect, especially not when they are based on those same geometries that served to design the preconditioners. Our findings further cast doubt on the true effectiveness of multigrid techniques \cite{de2020multigrid,jomo2021hierarchical}, where Schwarz preconditioners have been used as smoothers. Consequently, a perfectly robust strategy is still sought. The next section revisits an old idea that has proven effective in many different settings.

\section{Deflation-based preconditioner}
\label{se: deflation}
As we have seen, the ill-conditioning and subsequent slowdown of iterative solvers is caused by a relatively small number of near zero eigenvalues. In numerical linear algebra, \emph{deflation} refers to a broad collection of methods for removing unwanted eigenvalues. The original idea dates as far back as the 1940s, when Hotelling \cite{hotelling1943some} introduced the concept of deflation ``by substraction''. Deflation may indeed take various forms and applies to eigenvalue problems and linear systems alike (see e.g., \citep{parlett1998symmetric, saad2011numerical} for an overview of deflation techniques for eigenvalue problems and \cite{frank2001construction,tang2009comparison} for linear systems). Deflation techniques were recently proposed in IGA for outlier removal \cite{voet2026mass} or eigenvalue stabilization \cite{eisentrager2024eigenvalue,burchner2026generalized} but not directly for preconditioning. Although eigenvalue stabilization also improves the conditioning, it directly alters the system matrix and cannot be considered as a form of preconditioning per se. In this section, we employ deflation as a preconditioner to eliminate the small eigenvalues resulting from badly trimmed elements. Since the IGA community might not be familiar with deflation techniques for linear systems, short proofs are included for completeness.

\subsection{Introduction}
In this section, we recall some general properties of deflation-based preconditioning for a linear system $A\bm{x}=\bm{b}$ with an SPD matrix $A$. Given a full-rank matrix $Z \in \mathbb{R}^{n \times r}$ with $r<n$, we define the projector
\begin{equation}
\label{eq: projector}
    P = I-AZ(Z^TAZ)^{-1}Z^T.
\end{equation}
Since $A$ is SPD and $Z$ is full-rank, the so-called \emph{coarse matrix} $E=Z^TAZ$ is also SPD and therefore invertible. Instead of solving the original system, we solve the projected system
\begin{equation}
\label{eq: projected_system}
PA\tilde{\bm{x}}=P\bm{b}.
\end{equation}
Note that the projected matrix $PA$ is symmetric but singular. Some straightforward properties are summarized in the next lemma. Hereafter, we use the Loewner order on symmetric matrices and write $A \succeq B$ (resp. $A \succ B$) to indicate that $A-B$ is positive semidefinite (resp. positive definite) and let $R(Z)$ denote the range of $Z$.

\begin{lemma}
\label{lem: basic_properties}
Let $A \succ 0$. Then,
\begin{enumerate}[label=(\arabic*), itemsep=0pt, parsep=0pt]
    \item $PA=AP^T$, \label{prop: conjugacy}
    \item $P^TZ=0$, \label{prop: PtZ}
    \item $PAZ=0$, \label{prop: PAZ}
    \item $P^2=P$, \label{prop: projector}
    \item $PA \succeq 0$, \label{prop: SPSP}
    \item $\ker(PA)=R(Z)$. \label{prop: ker_PA}
\end{enumerate}
\end{lemma}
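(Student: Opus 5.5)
All six properties follow by direct algebra from the definition \eqref{eq: projector}. The two ingredients are the symmetry of $A$ and of the coarse matrix $E=Z^TAZ$ (hence of $E^{-1}$), and the full rank of $Z$. I will prove the items in the listed order, since later items reuse earlier ones.

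For \ref{prop: conjugacy}, expand $PA = A - AZE^{-1}Z^TA$. This expression is symmetric, and it equals $(PA)^T = AP^T$, so $PA=AP^T$ is immediate. For \ref{prop: PtZ}, write $P^T = I - ZE^{-1}Z^TA$ and compute
\begin{equation*}
P^TZ = Z - ZE^{-1}(Z^TAZ) = Z - Z = 0.
\end{equation*}
Item \ref{prop: PAZ} then follows from \ref{prop: conjugacy} and \ref{prop: PtZ}: $PAZ = AP^TZ = 0$. For \ref{prop: projector}, expand $P^2 = I - 2AZE^{-1}Z^T + AZE^{-1}(Z^TAZ)E^{-1}Z^T$. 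The last term collapses to $AZE^{-1}Z^T$, which gives $P^2=P$.

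For \ref{prop: SPSP}, symmetry of $PA$ is already known from \ref{prop: conjugacy}. For positive semidefiniteness, the cleanest route is to write $PA = P A P^T$, which holds because $PAP^T = P(PA) = P^2A = PA$ by \ref{prop: conjugacy} and \ref{prop: projector}. Then $\bm{x}^TPA\bm{x} = (P^T\bm{x})^TA(P^T\bm{x}) \geq 0$ since $A\succ 0$.

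The only step needing a genuine argument is \ref{prop: ker_PA}. The inclusion $R(Z)\subseteq\ker(PA)$ is exactly \ref{prop: PAZ}. For the reverse inclusion, take $\bm{x}$ with $PA\bm{x}=0$. Then
\begin{equation*}
0 = \bm{x}^TPA\bm{x} = \|P^T\bm{x}\|_A^2,
\end{equation*}
so $P^T\bm{x}=0$, since $A$ is positive definite. This means $\bm{x} = ZE^{-1}Z^TA\bm{x}$, which lies in $R(Z)$. Alternatively, one could use the rank count $\operatorname{rank}(AZE^{-1}Z^T)=r$, which uses that $Z$ has full rank, and then argue by dimensions. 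The energy-norm argument is shorter, so I would use that one. I expect no real obstacle beyond keeping the transposes straight.
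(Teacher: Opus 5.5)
Your proof is correct and follows the paper's argument. Items (1)--(4) come by direct expansion, which the paper simply cites as immediate. Positive semidefiniteness comes from $PA = PAP^T$, and $\ker(PA)=R(Z)$ comes from item (3) together with the observation that $PA\bm{x}=0$ forces $\bm{x}=ZE^{-1}Z^TA\bm{x}$. The one difference is in (6): the paper uses invertibility of $A$ directly ($PA\bm{x}=AP^T\bm{x}=0 \Rightarrow P^T\bm{x}=0$), which is slightly shorter than your energy-norm step through positive definiteness.
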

\begin{proof}
Properties \ref{prop: conjugacy}, \ref{prop: PtZ}, \ref{prop: PAZ} and \ref{prop: projector} are immediate (see e.g., \cite{vermolen2004deflation,tang2008two} for detailed proofs). The latter also confirms that $P$ is a projection. The symmetry of $PA$ directly follows from \ref{prop: conjugacy} and the symmetry of $A$. Its positive semidefiniteness is a consequence of \ref{prop: projector}. Indeed, since $A \succ 0$, then $PAP^T \succeq 0$ \cite{horn2012matrix} and consequently $PA=(PA)^T=(P^2A)^T=PAP^T \succeq 0$. For proving \ref{prop: ker_PA}, we first deduce from \ref{prop: PAZ} that $R(Z) \subseteq \ker(PA)$ and from the definition of $P$ in \eqref{eq: projector} and the invertibility of $A$, $\ker(PA) \subseteq R(Z)$. Therefore, $\ker(PA)=R(Z)$.
\end{proof}

As already mentioned, the convergence of the Conjugate Gradient (CG) method is plagued by small near zero eigenvalues. The goal of deflation is to shift all those eigenvalues to zero (in exact arithmetic). As shown in the next lemma, this is perfectly achievable if the columns of $Z$ contain the associated eigenvectors. Here again, we assume that the eigenvalues are numbered in ascending algebraic order.

\begin{lemma}
\label{lem: eigenvectors}
Let $\{(\lambda_k,\bm{v}_k)\}_{k=1}^n$ denote the eigenpairs of an SPD matrix $A$. Then, if $Z=[\bm{v}_1,\dots,\bm{v}_r]$ contains the $r$ smallest orthonormal eigenvectors of $A$, then
\begin{equation*}
    PA\bm{v}_k =
    \begin{cases}
        0 & \text{for } k=1,\dots,r, \\
        \lambda_k \bm{v}_k & \text{for } k=r+1,\dots,n.
    \end{cases}
\end{equation*}
\end{lemma}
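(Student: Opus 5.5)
The plan is a direct computation of $PA\bm{v}_k$, using the definition of $P$ in \eqref{eq: projector} together with the orthonormality of the eigenvectors. Write $V_r=[\bm{v}_1,\dots,\bm{v}_r]=Z$ and $\Lambda_r=\diag(\lambda_1,\dots,\lambda_r)$. Then $AZ=Z\Lambda_r$, and since $Z^TZ=I_r$, the coarse matrix is simply $E=Z^TAZ=\Lambda_r$. It is invertible because all $\lambda_k>0$ when $A\succ 0$. Substituting into \eqref{eq: projector} gives the explicit form
\begin{equation*}
    P = I - Z\Lambda_r\Lambda_r^{-1}Z^T = I - ZZ^T,
\end{equation*}
which is the orthogonal projector onto $R(Z)^\perp$. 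This simplification does all the work.

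The two cases then follow. For $k\le r$, \Cref{lem: basic_properties}\ref{prop: PAZ} gives $PAZ=0$. Since $\bm{v}_k$ is a column of $Z$, we get $PA\bm{v}_k=0$ at once; alternatively, $\bm{v}_k\in R(Z)=\ker(PA)$ by \ref{prop: ker_PA}. For $k>r$, we write $PA\bm{v}_k=\lambda_k P\bm{v}_k=\lambda_k(\bm{v}_k - ZZ^T\bm{v}_k)$. Orthonormality of the full eigenbasis gives $Z^T\bm{v}_k=0$, because $\bm{v}_k$ is orthogonal to $\bm{v}_1,\dots,\bm{v}_r$. Hence $PA\bm{v}_k=\lambda_k\bm{v}_k$.

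No step is a real obstacle. The only point needing care is the orthogonality $Z^T\bm{v}_k=0$ for $k>r$. This holds automatically when $\lambda_k\neq\lambda_j$ for all $j\le r$, since $A$ is symmetric. If $\lambda_r=\lambda_{r+1}$ we must rely on the hypothesis that the eigenvectors are chosen orthonormal, which the statement provides, so I will invoke it explicitly. As a consequence, I will also note that the spectrum of $PA$ is $\{0,\dots,0,\lambda_{r+1},\dots,\lambda_n\}$.
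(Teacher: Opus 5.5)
Your proposal is correct and follows essentially the same route as the paper's proof: both use $AZ=Z\Lambda_r$ and $Z^TZ=I_r$ to reduce $P$ to the orthogonal projector $I-ZZ^T$, then read off $PA\bm{v}_k=\lambda_k P\bm{v}_k$ using orthonormality of the full eigenbasis. Invoking \Cref{lem: basic_properties} for $k\le r$ instead of computing $P\bm{v}_k=0$ directly makes no difference to the argument.
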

\begin{proof}

If $Z=V=[\bm{v}_1,\dots,\bm{v}_r]$ contains the $r$ smallest orthonormal eigenvectors of $A$, $AV=VD$, where $D=\diag(\lambda_1, \dots, \lambda_r)$ is the diagonal matrix containing the $r$ smallest eigenvalues. Therefore,
\begin{equation*}
    P = I - AV(V^TAV)^{-1}V^T = I - VDD^{-1}V^T = I - VV^T
\end{equation*}
reduces to the orthogonal projector in the orthogonal complement of the smallest eigenspace. Consequently,
\begin{equation*}
    PA\bm{v}_k = \lambda_k P \bm{v}_k =
    \begin{cases}
        0 & \text{for } k=1,\dots,r, \\
        \lambda_k \bm{v}_k & \text{for } k=r+1,\dots,n.
    \end{cases}    
\end{equation*}
\end{proof}

Unfortunately, choosing eigenvectors is not a viable solution, especially not in our setting where they are so difficult to accurately compute numerically (see \Cref{rk: eigenvalue_computation}). Fortunately, choosing eigenvectors is also not necessary: we must only ensure that the columns of $Z$ (nearly) span the smallest eigenspace (see \Cref{lem: basic_properties}, \ref{prop: ker_PA}). We will later propose an explicit construction for the deflation matrix $Z$ purely based on geometrical grounds, but for the time being, we tend to other concerns. In particular, solving a singular system may seem surprising at first glance. In fact, CG remains applicable to positive semidefinite systems, provided the right-hand side is consistent (which is certainly true for \eqref{eq: projected_system}). Indeed, the method still converges to a particular solution, and the zero eigenvalues do not play any role in the convergence of CG \cite{kaasschieter1988preconditioned}. The solution of the original system $A\bm{x}=\bm{b}$ is recovered by first writing
\begin{equation*}
    \bm{x} = (I-P^T)\bm{x} + P^T\bm{x}
\end{equation*}
and reworking the expression. Firstly, since $A\bm{x}=\bm{b}$,
\begin{equation*}
    (I-P^T)\bm{x} = Z(Z^TAZ)^{-1}Z^TA\bm{x} = Z(Z^TAZ)^{-1}Z^T\bm{b}.
\end{equation*}
Secondly, since $\bm{x}$ in particular satisfies \eqref{eq: projected_system} and $\ker(PA)=R(Z)$ (see \Cref{lem: basic_properties}, \ref{prop: ker_PA}),
\begin{equation*}
    \tilde{\bm{x}} = \bm{x} + Z\bm{y}
\end{equation*}
for some $\bm{y} \in \mathbb{R}^r$ and thanks to \Cref{lem: basic_properties}, \ref{prop: PtZ}, $P^T\tilde{\bm{x}}=P^T\bm{x}$. Therefore, putting the pieces back together,
\begin{equation}
\label{eq: solution}
    \bm{x}= (I-P^T)\bm{x} + P^T\bm{x} = Z(Z^TAZ)^{-1}Z^T\bm{b} + P^T\tilde{\bm{x}}
\end{equation}
recovers the solution of the original system. However, since $PA$ is singular, the efficiency of preconditioning must be assessed through different means. The quantity that naturally emerges from the convergence analysis is the \emph{effective condition number} of $PA$, defined as
\begin{equation*}
    \keff(PA) = \frac{\lambda_n(PA)}{\lambda_{r+1}(PA)}.
\end{equation*}
The next lemma shows that the effective condition number of the deflated matrix cannot be any worse than the condition number of $A$ for any full-rank matrix $Z$.

\begin{lemma}[{\cite[Theorem 2.1]{vuik2005deflation}}]
\label{lem: cond_defl}
\begin{equation*}
    \keff(PA) \leq \kappa(A).
\end{equation*}  
\end{lemma}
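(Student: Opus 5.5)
The plan is to bound the two extreme nonzero eigenvalues of $PA$ separately: show $\lambda_n(PA)\le\lambda_n(A)$ and $\lambda_{r+1}(PA)\ge\lambda_1(A)$, after which the claim follows by taking the ratio. The key device is a symmetric reformulation. Since $A\succ 0$, write $A^{1/2}$ for its SPD square root and set
\begin{equation*}
\Pi = I - A^{1/2}Z(Z^TAZ)^{-1}Z^TA^{1/2}.
\end{equation*}
This $\Pi$ is the orthogonal projector onto $R(A^{1/2}Z)^\perp$, since it is symmetric and idempotent. A direct computation from \eqref{eq: projector} gives $PA = A^{1/2}\,\Pi\, A^{1/2}$. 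Because $\Pi=\Pi^2$, this can be rewritten as $PA = (A^{1/2}\Pi)(\Pi A^{1/2})$.

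The matrices $XY$ and $YX$ share their nonzero eigenvalues. Applying this with $X=A^{1/2}\Pi$ and $Y=\Pi A^{1/2}$ shows that $PA$ has the same spectrum as $\Pi A \Pi$. Let $W=R(A^{1/2}Z)^\perp$, which has dimension $n-r$ because $Z$ is full rank. The matrix $\Pi A\Pi$ vanishes on $W^\perp$. On $W$ it acts as the compression of $A$, so its remaining $n-r$ eigenvalues are those of $Q^TAQ$, where $Q$ is an orthonormal basis of $W$. By \Cref{lem: basic_properties}, \ref{prop: ker_PA}, exactly $r$ eigenvalues of $PA$ are zero, so $\lambda_{r+1}(PA),\dots,\lambda_n(PA)$ are precisely the eigenvalues of $Q^TAQ$.

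Cauchy interlacing, or equivalently the Rayleigh quotient restricted to $W$, then gives $\lambda_1(A)\le\lambda_{\min}(Q^TAQ)$ and $\lambda_{\max}(Q^TAQ)\le\lambda_n(A)$. Hence
\begin{equation*}
\keff(PA)=\frac{\lambda_{\max}(Q^TAQ)}{\lambda_{\min}(Q^TAQ)}\le\frac{\lambda_n(A)}{\lambda_1(A)}=\kappa(A).
\end{equation*}

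The main obstacle is identifying the $n-r$ nonzero eigenvalues of $PA$ correctly. We must check that none of the zero eigenvalues of $\Pi A\Pi$ come from $W$ itself. This is guaranteed because $Q^TAQ\succ 0$, which follows from $A\succ 0$ and $Q$ having full column rank. The rest is routine verification of the identity $PA=A^{1/2}\Pi A^{1/2}$.
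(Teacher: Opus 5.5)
Your proof is correct, but it takes a different route from the paper's.

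The paper writes $PA = A - AZ(Z^TAZ)^{-1}Z^TA$ as a negative semidefinite rank-$r$ perturbation of $A$. It then cites Thompson's generalization of Cauchy interlacing to get $\lambda_k(A)\le\lambda_{r+k}(PA)\le\lambda_{r+k}(A)$ for $k=1,\dots,n-r$, and reads off the two extreme bounds.

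You avoid any perturbation theorem. The symmetric factorization $PA=A^{1/2}\Pi A^{1/2}=(A^{1/2}\Pi)(\Pi A^{1/2})$ and the $XY$/$YX$ argument identify the nonzero spectrum of $PA$ exactly with the spectrum of the compression $Q^TAQ$ of $A$ onto $R(A^{1/2}Z)^\perp$. After that, elementary Rayleigh-quotient bounds finish the proof. Since $X$ and $Y$ are both $n\times n$, $PA$ and $\Pi A\Pi$ in fact share their full characteristic polynomial.

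What each approach buys:
\begin{itemize}
\item The paper's proof is shorter, but it relies on an external result.
\item Yours is self-contained and gives an explicit description of the nonzero eigenvalues. It also makes the count of exactly $r$ zero eigenvalues transparent via $Q^TAQ\succ 0$.
\item You lose nothing in strength. Applying the full Poincar\'e separation theorem to $Q^TAQ$, with $Q$ having $n-r$ orthonormal columns, gives $\lambda_k(A)\le\lambda_k(Q^TAQ)\le\lambda_{k+r}(A)$. This is precisely the paper's interlacing chain.
\end{itemize}
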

\begin{proof}
First note that $PA = A-AZ(Z^TAZ)^{-1}Z^TA$ is a negative semidefinite rank $r$ correction of $A$. Therefore, thanks to a generalization of the Cauchy eigenvalue interlacing theorem \cite{thompson1976behavior},
\begin{equation*}
    \lambda_k(A) \leq \lambda_{r+k}(PA) \leq \lambda_{r+k}(A) \quad k=1,\dots,n-r.
\end{equation*}
In particular, $\lambda_{r+1}(PA) \geq \lambda_1(A)$ and $\lambda_n(PA) \leq \lambda_n(A)$. Consequently,
\begin{equation*}
    \keff(PA) = \frac{\lambda_n(PA)}{\lambda_{r+1}(PA)} \leq \frac{\lambda_n(A)}{\lambda_1(A)} = \kappa(A).
\end{equation*}
\end{proof}

Although the condition number cannot get any worse, if the deflation matrix $Z$ is not cleverly chosen, $\lambda_{r+1}(PA)$ may not be much larger than $\lambda_1(A)$. An explicit construction of the deflation matrix is provided at the end of this section. Before that, we discuss how to combine deflation with an arbitrary SPD preconditioner.

\subsection{Deflated Preconditioned Conjugate Gradient}
\label{se: dpcg}
In practice, deflation preconditioners are often combined with black-box preconditioners such as incomplete Cholesky or Jacobi \cite{vuik1999efficient,tang2008two}. In this section, we explain how to incorporate a generic SPD preconditioner $H$, which will eventually lead to the Deflated Preconditioned Conjugate Gradient (DPCG) method. Let $H=LL^T$ be the Cholesky factorization of $H$, where $L$ is lower triangular. The deflated version of the preconditioned conjugate gradient method applies the deflation strategy to the preconditioned system $\hat{A}\hat{\bm{x}}=\hat{\bm{b}}$, where $\hat{A}=L^{-1}AL^{-T}$, $\hat{\bm{x}}=L^T\bm{x}$ and $\hat{\bm{b}}=L^{-1}\bm{b}$. Thus,
\begin{align*}
    \hat{P} &= I - \hat{A}\hat{Z}(\hat{Z}^T\hat{A}\hat{Z})^{-1}\hat{Z}^T \\
      &= I - L^{-1}AZ(Z^TAZ)^{-1}Z^TL
\end{align*}
where $Z=L^{-T}\hat{Z}$. After some simplification, the deflated preconditioned system $\hat{P}\hat{A}\hat{\bm{x}}=\hat{P}\hat{\bm{b}}$ is equivalent to
\begin{equation}
\label{eq: deflated_preconditioned_system}
    L^{-1}PAL^{-T}\hat{\bm{x}}=L^{-1}P\bm{b}
\end{equation}
where, analogously to the non-preconditioned case, we defined
\begin{equation*}
    P = I - AZ(Z^TAZ)^{-1}Z^T.
\end{equation*}
\Cref{lem: cond_defl_prec} below is the preconditioned counterpart of \Cref{lem: cond_defl}.

\begin{lemma}[{\cite[Theorem 2.2]{vuik2005deflation}}]
\label{lem: cond_defl_prec}
\begin{equation*}
    \keff(H^{-1}PA) \leq \kappa(H^{-1}A).
\end{equation*}  
\end{lemma}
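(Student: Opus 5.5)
The plan is to reduce the statement to \Cref{lem: cond_defl} applied to the symmetrically preconditioned matrix $\hat{A}=L^{-1}AL^{-T}$ with deflation matrix $\hat{Z}=L^TZ$. The link is the derivation leading to \eqref{eq: deflated_preconditioned_system}. First, I will note that $H^{-1}PA = L^{-T}L^{-1}PA$ is similar to $L^{-1}PAL^{-T}$, via conjugation by $L^T$. Likewise, $H^{-1}A$ is similar to $\hat{A}$. Hence both matrices have the same spectra as symmetric positive (semi)definite matrices. In particular, $\keff(H^{-1}PA)=\keff(L^{-1}PAL^{-T})$ and $\kappa(H^{-1}A)=\kappa(\hat{A})$, so both effective condition numbers are well defined in terms of real nonnegative eigenvalues.

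Second, I will verify the identity $L^{-1}PAL^{-T}=\hat{P}\hat{A}$. Here $\hat{P}=I-\hat{A}\hat{Z}(\hat{Z}^T\hat{A}\hat{Z})^{-1}\hat{Z}^T$ and $\hat{Z}=L^TZ$. This is a direct computation:
\begin{equation*}
\hat{Z}^T\hat{A}\hat{Z}=Z^TLL^{-1}AL^{-T}L^TZ=Z^TAZ, \qquad \hat{A}\hat{Z}=L^{-1}AZ.
\end{equation*}
It follows that $\hat{P}=I-L^{-1}AZ(Z^TAZ)^{-1}Z^TL=L^{-1}PL$. Therefore $\hat{P}\hat{A}=L^{-1}PLL^{-1}AL^{-T}=L^{-1}PAL^{-T}$, as claimed. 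Since $L$ is invertible, $\hat{Z}$ has full rank $r$ whenever $Z$ does.

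Third, $\hat{A}$ is SPD, being a congruence of the SPD matrix $A$, and $\hat{Z}$ has full rank. Hence \Cref{lem: cond_defl} applies verbatim to the pair $(\hat{A},\hat{Z})$ and gives $\keff(\hat{P}\hat{A})\leq\kappa(\hat{A})$. Chaining the equalities from the first step yields $\keff(H^{-1}PA)\leq\kappa(H^{-1}A)$.

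The only delicate point is the bookkeeping of similarity versus congruence. Eigenvalues, including the $r$ zero eigenvalues and their ordering, must be transferred correctly. Similarity preserves the full spectrum, so $\lambda_{r+1}$ and $\lambda_n$ match exactly. I therefore do not expect a genuine obstacle beyond carefully stating these similarity transformations.
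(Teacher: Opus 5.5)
Your proposal is correct and follows essentially the same route as the paper: apply \Cref{lem: cond_defl} to the symmetrically preconditioned pair $(\hat{A},\hat{Z})$ with $\hat{Z}=L^TZ$, and transfer the spectra through the similarities $L^T(H^{-1}PA)L^{-T}=\hat{P}\hat{A}$ and $L^T(H^{-1}A)L^{-T}=\hat{A}$. The only difference is that you write out explicitly the identity $\hat{P}=L^{-1}PL$ and the SPD and full-rank hypotheses, which the paper leaves implicit in its derivation of \eqref{eq: deflated_preconditioned_system}.
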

\begin{proof}
The proof immediately follows from applying \Cref{lem: cond_defl} to the deflated preconditioned system \eqref{eq: deflated_preconditioned_system} and realizing that $\hat{P}\hat{A}$ and $\hat{A}$ are similar to $H^{-1}PA$ and $H^{-1}A$, respectively.    
\end{proof}

Applying the standard CG method to \eqref{eq: deflated_preconditioned_system} finally leads to the deflated preconditioned conjugate gradient method summarized in \Cref{algo: dpcg}. Its derivation is detailed in \Cref{app: dpcg}. In the sequel, we will often refer to deflation as a form of preconditioning, although those are strictly speaking different components.

\begin{algorithm}[H]
\begin{algorithmic}[1]
\caption{Deflated Preconditioned Conjugate Gradient (DPCG) \cite[Algorithm 1]{vermolen2004deflation}}
\label{algo: dpcg}
\State Set $\bm{r}_0=P(\bm{b}-A\bm{x}_0)$ and $\bm{p}_0=H^{-1}\bm{r}_0$.
\For{$j=0,1,\dots$ until convergence}
    \State $\alpha_j = (\bm{r}_j,H^{-1}\bm{r}_j)/(\bm{p}_j,PA\bm{p}_j)$
    \State $\bm{x}_{j+1}=\bm{x}_j+\alpha_j\bm{p}_j$
    \State $\bm{r}_{j+1}=\bm{r}_j-\alpha_jPA\bm{p}_j$
    \State $\beta_j=(\bm{r}_{j+1},H^{-1}\bm{r}_{j+1})/(\bm{r}_j,H^{-1}\bm{r}_j)$
    \State $\bm{p}_{j+1}=H^{-1}\bm{r}_{j+1}+\beta_j \bm{p}_j$
\EndFor
\State $\bm{x}_{j+1} = Z(Z^TAZ)^{-1}Z^T\bm{b} + P^T\bm{x}_{j+1}$
\end{algorithmic}
\end{algorithm}

As usual, \Cref{algo: dpcg} terminates when the (relative) residual falls below a certain tolerance. This stopping criterion stems from classical bounds on the relative error. For CG, which minimizes the error in the $A$-norm, the following upper bound is appropriate: 
\begin{equation}
\label{eq: classical_bound}
    \frac{\|\bm{x}-\bm{x}_j\|_A}{\|\bm{x}\|_A} \leq \sqrt{\kappa(A)} \frac{\|\bm{r}_j\|_2}{\|\bm{b}\|_2}.
\end{equation}
However, the relative residual does not always provide good control over the relative error for severely ill-conditioned systems. Applying \eqref{eq: classical_bound} instead to the preconditioned system $\hat{A}\hat{\bm{x}}=\hat{\bm{b}}$ immediately yields
\begin{equation}
\label{eq: preconditioned_bound}
    \frac{\|\bm{x}-\bm{x}_j\|_A}{\|\bm{x}\|_A} \leq \sqrt{\kappa(H^{-1}A)} \frac{\|\bm{r}_j\|_{H^{-1}}}{\|\bm{b}\|_{H^{-1}}}.
\end{equation}
Since $\kappa(H^{-1}A) \ll \kappa(A)$, the relative preconditioned residual in \eqref{eq: preconditioned_bound} should provide much better control over the relative error. Due to the semidefiniteness of $PA$, accounting for deflation is less straightforward, and the proof of the following bound is deferred to \Cref{app: dpcg}:
\begin{equation}
\label{eq: dp_bound}
    \frac{\|\bm{x}-\bm{x}_j\|_{PA}}{\|\bm{x}\|_{PA}} \leq \sqrt{\keff(H^{-1}PA)} \frac{\|\bm{r}_j\|_{H^{-1}}}{\|P\bm{b}\|_{H^{-1}}}.
\end{equation}
The upper bound \eqref{eq: dp_bound} also suggests monitoring the preconditioned residual. In our numerical experiments, we have additionally computed the smallest nonzero eigenvalue of $H^{-1}PA$ and terminated the algorithm when
\begin{equation}
\label{eq: stopping_condition}
    \|\bm{r}_j\|_{H^{-1}} \leq \epsilon \sqrt{\lambda_{r+1}(H^{-1}PA)}\|P\bm{b}\|_{H^{-1}}
\end{equation}
for a specified tolerance $\epsilon$. In practice, a coarse estimate of $\lambda_{r+1}(H^{-1}PA)$ with e.g., Kaasschieter's method \cite{kaasschieter1988practical} is sufficient.

\subsection{Construction of the deflation matrix}
\label{se: Z_matrix_construction}
Finally, we come to the construction of the deflation (or projection) vectors. This step is crucial to the success of deflation-based preconditioners and is generally nontrivial (see e.g., \cite{vuik1999efficient,vermolen2004deflation} for diffusion problems in porous media with large contrasts in permeability and \cite{tang2008two} for bubbly flow problems). Fortunately, the construction is relatively straightforward in our case. Indeed, since the ill-conditioning is caused by basis functions that are only supported on trimmed elements, we can effectively construct a basis for the smallest eigenspace just from the geometry. The basis vectors will then form the columns of the deflation matrix. For this purpose, we reconsider the set of cut elements
\begin{equation*}
    \mathcal{T}_C = \{T \in \mathcal{T}_h \colon |T \cap \Omega | <  |T| \}
\end{equation*}
and its complement $\mathcal{T}_C' = \mathcal{T}_h \setminus \mathcal{T}_C$, the set of uncut elements. We then define the cut and uncut regions of the computational domain
\begin{equation*}
    \Omega_C = \bigcup_{T \in \mathcal{T}_C} T \quad \text{and} \quad \Omega_C' = \bigcup_{T \in \mathcal{T}_C'} T.
\end{equation*}
These two regions partition the basis into disjoint sets of ``strongly'' and ``weakly'' supported basis functions. The set of weakly supported basis functions
\begin{equation*}
    \Phi_C = \{\varphi \in \Phi \colon \supp_{\Omega}(\varphi) \subseteq \Omega_C\}
\end{equation*}
consists of functions that are exclusively supported in the cut region, and its complement $\Phi_C' = \Phi \setminus \Phi_C$ is the set of strongly supported basis functions. The computational domain $\Omega_h = \Omega_C \cup \Omega_C'$ is exemplarily shown in \Cref{fig: 2D_Laplace_rotating_square_active_mesh_s7} for a rotated square geometry. Its partitioning into cut and uncut regions is illustrated in \Cref{fig: 2D_Laplace_rotating_square_mesh_partitioning_s7} and serves to construct the deflation matrix $Z$. Denoting $I_C=\{i_1,\dots,i_r\}$ the set of indices for weakly supported basis functions, the deflation matrix is simply defined as
\begin{equation*}
    Z = [\bm{e}_{i_1},\dots,\bm{e}_{i_r}],
\end{equation*}
where $\bm{e}_i$ denotes the $i$th canonical basis vector of $\mathbb{R}^n$. Therefore, $Z$ is trivially full-rank, and all the ill-conditioning is now concentrated in the small coarse matrix $E=Z^TAZ$, which is factorized once and for all with a Cholesky decomposition. Thus, applying the deflation preconditioner (or projector) $P$ essentially amounts to solving small triangular systems in addition to a few sparse matrix-vector multiplications. 

Note that the rescaling does not change the support of basis functions and therefore the labeling of strongly and weakly supported basis functions. Therefore, we may repeat this construction for the Jacobi preconditioned system $\hat{A}\hat{\bm{x}}=\hat{\bm{b}}$. However, it may not be necessary in this case to consider all weakly supported basis functions. Indeed, among them, some are effectively cured by diagonal scaling alone. Hence, we present in the next section an algorithm for reducing the rank of the deflation matrix $Z$ by retaining only the truly pathological basis functions.

\begin{figure}[H]
     \centering
     \begin{subfigure}[t]{0.48\textwidth}
    \centering
    \includegraphics[width=\textwidth]{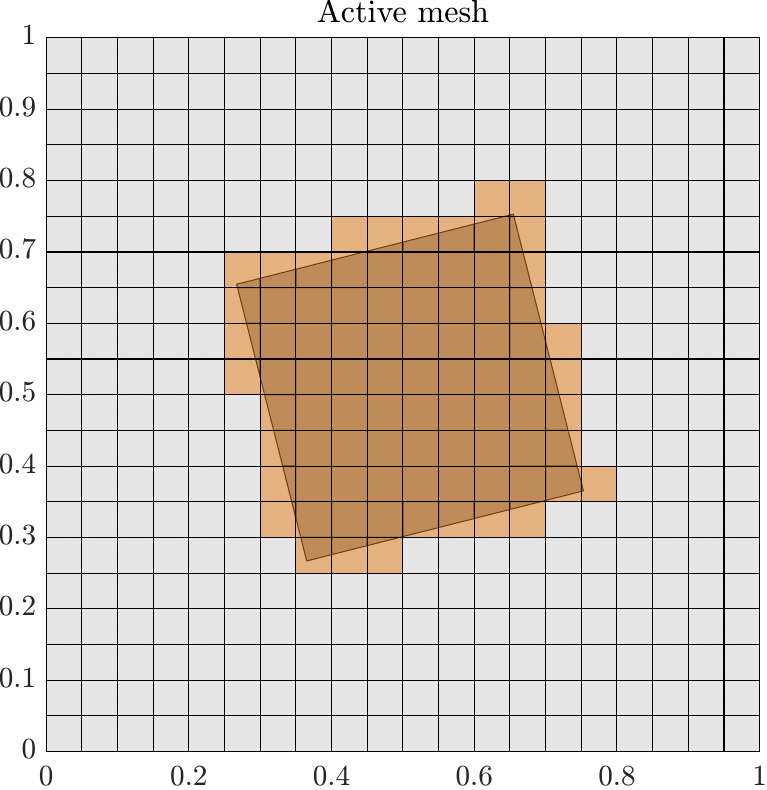}
    \caption{Computational domain $\Omega_h$ (orange) formed by all active mesh elements}
    \label{fig: 2D_Laplace_rotating_square_active_mesh_s7}
     \end{subfigure}
     \hfill
     \begin{subfigure}[t]{0.48\textwidth}
    \centering
    \includegraphics[width=\textwidth]{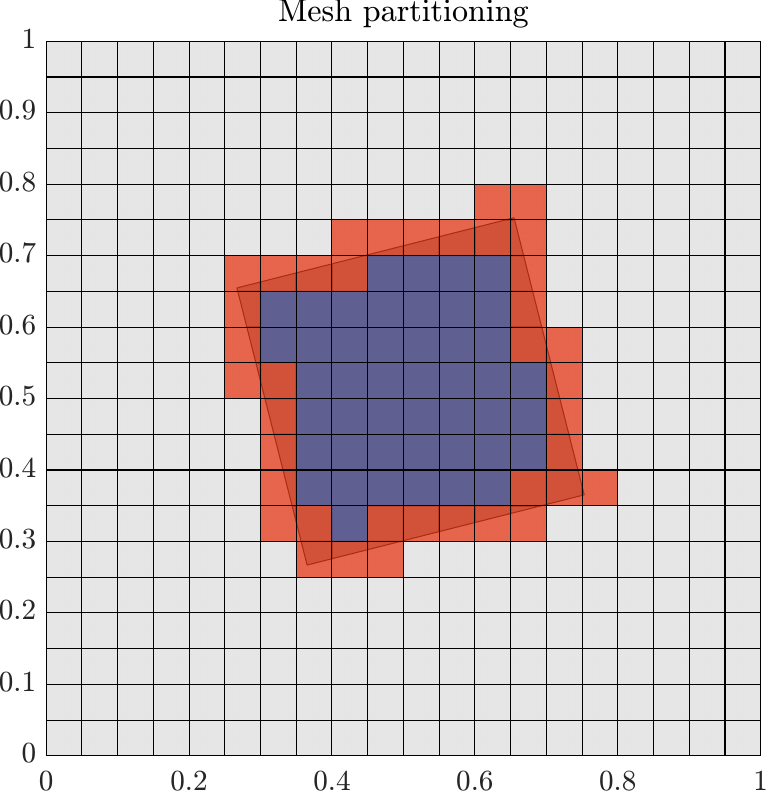}
    \caption{Partitioning of the computational domain $\Omega_h$ in cut $\Omega_C$ (red) and uncut $\Omega_C'$ (blue) regions}
    \label{fig: 2D_Laplace_rotating_square_mesh_partitioning_s7}
     \end{subfigure}
     \hfill
    \caption{Trimmed domain}
    \label{fig: 2D_Laplace_rotating_square_mesh}
\end{figure}

\begin{remark}
Since $Z$ is merely formed by columns of the identity, the deflated preconditioned matrix $\hat{P}\hat{A}$ has zero rows and columns for indices in $I_C$. One may then easily show that the deflation procedure reduces to solving $\hat{A}\hat{\bm{x}}=\hat{\bm{b}}$ with a Schur complement technique (also known as \emph{static condensation} in finite element analysis \cite{hughes2012finite,bathe2006finite}). However, the more general presentation adopted here leaves the door open to better choices of $Z$ with fewer columns. We will not dwell on it here and postpone this endeavor to future work.
\end{remark}

\subsection{Rank-reduction technique}
The deflation vectors we have considered so far correspond to functions that are only supported on trimmed elements. However, many of them are cured by diagonal scaling alone and are in fact harmless. In this section, we devise a heuristic strategy for distinguishing these basis functions from truly pathological ones that may become nearly linearly dependent. As already noted in \cite{de2017condition}, near linear dependencies often occur among basis functions with the same active support; i.e., functions such that
\begin{equation*}
    \supp_{\Omega}(\varphi_i) = \supp_{\Omega}(\varphi_j).
\end{equation*}
In practice though, this relation must be understood as an approximation rather than an equality. Therefore, we regroup functions that nearly share the same active support. In doing so, we try to exclude functions that contain on average relatively large trimmed elements in their support. Firstly, for each weakly supported basis function $\varphi_i \in \Phi_C$, we consider the cut elements within its support
\begin{equation*}
    \mathcal{T}_{i} = \{T \in \mathcal{T}_C \colon T \subseteq \supp(\varphi_i)\}.
\end{equation*}
These elements cover a local cut region
\begin{equation*}
    \Omega_{i} = \bigcup_{T \in \mathcal{T}_{i}} T.
\end{equation*}
Then, two basis functions $\varphi_i, \varphi_j \in \Phi_C$ are flagged if the following conditions are satisfied:
\begin{enumerate}
    \item $|\supp_{\Omega}(\varphi_i) \cap \supp_{\Omega}(\varphi_j)| > 0$,
    \item $|\supp_{\Omega}(\varphi_i) \cup \supp_{\Omega}(\varphi_j)| \leq \tau |\Omega_i \cup \Omega_j|$,
\end{enumerate}
for some small tolerance $\tau \in (0,1]$. In other words, the functions must intersect, and the union of their supports contains on average ``small'' trimmed elements. The precise quantification of ``small'' depends on the tolerance $\tau$. The smaller $\tau$ is, the smaller are the trimmed elements (on average) within the support of overlapping basis functions. All weakly supported basis functions that satisfy both conditions are then regrouped in a reduced set $\Phi_C^r$. The procedure is exemplarily illustrated in \Cref{fig: 2D_Laplace_ridge_corner_mesh_supp} for the house-like geometry encountered in \Cref{ex: ridge}.

\begin{figure}[H]
     \centering
     \begin{subfigure}[t]{0.48\textwidth}
    \centering
    \includegraphics[width=\textwidth]{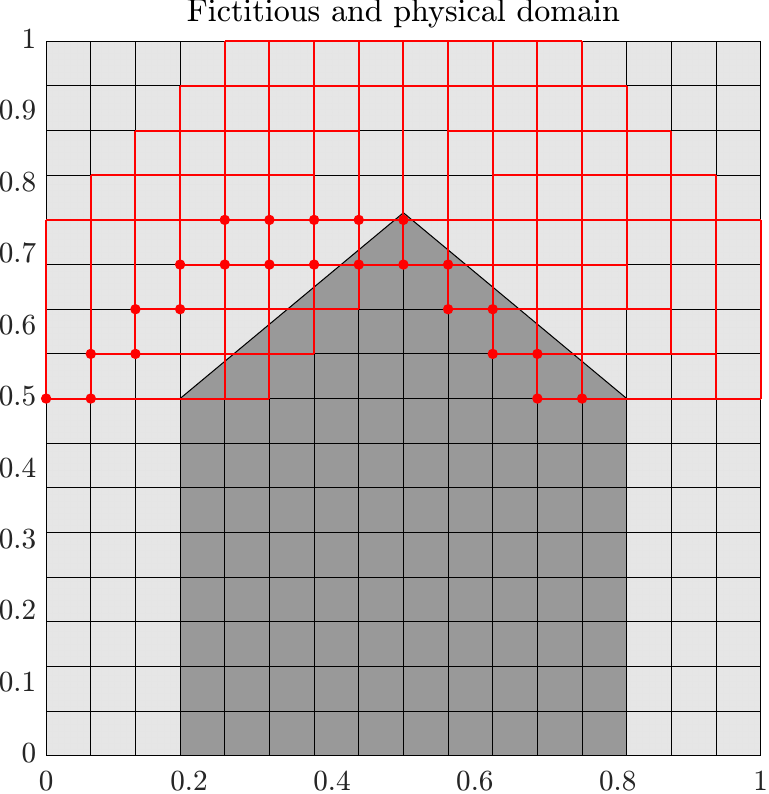}
    \caption{Full set $\Phi_C$}
    \label{fig: 2D_Laplace_ridge_corner_mesh_supp_IS}
     \end{subfigure}
     \hfill
     \begin{subfigure}[t]{0.48\textwidth}
    \centering
    \includegraphics[width=\textwidth]{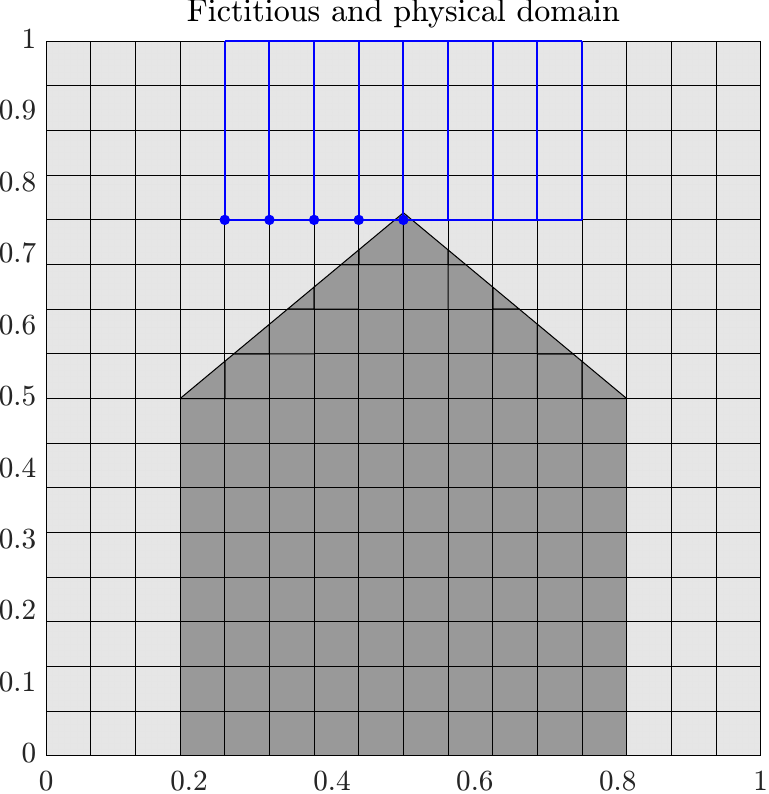}
    \caption{Reduced set $\Phi_C^r$ with $\tau = 0.1$}
    \label{fig: 2D_Laplace_ridge_corner_mesh_supp_ISr_tau_0_1}
     \end{subfigure}
     \hfill
    \caption{Support of trimmed basis functions}
    \label{fig: 2D_Laplace_ridge_corner_mesh_supp}
\end{figure}

For $\tau=0.1$, the algorithm reduces the number of problematic basis functions in \Cref{fig: 2D_Laplace_ridge_corner_mesh_supp} from $24$ to just $5$. Among those five functions, three of them have exactly the same active support, while the other two only miss one of the triangles forming the tip of the ridge. The analysis in \Cref{se: diagonal_scaling} suggests that neglecting those functions may not cause any problem in this particular setup, but in more realistic unstructured cut scenarios, it is certainly wiser to regroup them with other functions that are nearly sharing the same active support. A larger set of weakly supported basis functions is retained for larger tolerances. In our experience, $\tau=0.25$ often allowed us to eliminate at least half of all weakly supported basis functions without undermining the robustness of the preconditioner. This value was therefore adopted in all the experiments presented in the next section.

\begin{remark}
At first sight, reducing the deflation rank by only considering functions supported on small cut elements (whose volume fraction $|T \cap \Omega |/|T|$ falls below a certain tolerance) may also seem attractive \cite{jomo2019robust}. However, as we have seen in \Cref{se: diagonal_scaling}, for a fixed volume fraction, the condition number of the Jacobi preconditioned matrix scales very differently depending on the cut configuration and the polynomial order. Therefore, this criterion alone is generally insufficient. It led to significant dispersion in the results and is not recommended. The strategy outlined earlier provided much better control over the quality of the preconditioner.  
\end{remark}

While reducing the deflation rank certainly enhances the performance of the method, the size of the coarse matrix is still significantly smaller than the size of the original system, even without reduction. Moreover, since the deflation vectors are merely selected columns of the identity, the coarse matrix $E$ reduces to a small submatrix of $A$ and generally inherits some sparsity. Therefore, reduction provides an advantage but is not crucial for efficiency.

\begin{remark}
It may sometimes be possible to combine the deflation strategy with an incomplete Cholesky preconditioner instead of Jacobi \cite{vuik1999efficient}. However, the incomplete factorization without any fill-in broke down in most of our examples, and the modifications suggested in \cite{de2017condition} led to a preconditioner that did not perform any better than Jacobi. Therefore, we stuck to the standard Jacobi preconditioner in all our examples.     
\end{remark}

\section{Numerical results}
\label{se: numerical_results}
The Jacobi, SIPIC, Schwarz, and deflation preconditioners are now compared on a few selected examples, representative of a broad class of applications. In all our experiments, the physical domain $\Omega$ is embedded in a fictitious domain $\widehat{\Omega}$, discretized with a uniform B-spline or finite element mesh of mesh size $h$, degree $p$ and smoothness $0 \leq k \leq p-1$ (in the case of B-splines). Apart from the conditioning of system matrices, we also study the convergence of iterative solvers. The (deflated) preconditioned conjugate gradient algorithm is used for this purpose owing to the positive (semi-)definiteness of system matrices. The algorithm terminates as soon as the preconditioned residual meets the stopping criterion \eqref{eq: stopping_condition} for a tolerance $\epsilon$ or when the number of iterations exceeds a certain threshold, whichever happens first.

\begin{example}[$L^2$ projection]
\label{ex: l2_projection}
We first solve the $L^2$ projection problem \eqref{eq: L^2_projection} on the rotated lattice structure exemplarily shown in \Cref{fig: 2D_Laplace_rotating_lattice_geo} for an arbitrary rotation angle. This geometry was first introduced in \cite{de2019preconditioning} and we refer to the original article for its construction. The function that is projected is
\begin{equation*}
    u(x,y) = \sin(4 \pi x)\sin(4 \pi y)
\end{equation*}
and is shown in \Cref{fig: 2D_Laplace_rotating_lattice_solution}. As explained in \Cref{se: model_problems}, the $L^2$ projection problem requires solving a linear system $A\bm{u}=\bm{b}$ with the mass matrix only. We consider in this example maximally smooth B-spline discretizations of degree $2$ and $3$ on fine meshes with $70$ subdivisions in each direction, resulting in $1417$ to $1642$ degrees of freedom over the range of cases studied. We first examine the conditioning of the preconditioned matrix for various rotation angles and later compare the effectiveness of the preconditioners in speeding up the convergence of iterative solvers. Our comparison includes the newly developed deflation strategy with and without rank-reduction. In this case, our rank-reduction technique eliminates many weakly supported basis functions, commonly reducing their number by a factor $2$ to $3$. This considerable cutdown, exemplarily shown in \Cref{fig: 2D_Laplace_rotating_lattice_support_p2_n70_s9} for quadratic B-splines and a rotation angle of $\alpha=0.3307$ rad, becomes even more significant on finer meshes or in higher dimensions.

\begin{figure}[H]
     \centering
     \begin{subfigure}[t]{0.48\textwidth}
    \centering
    \includegraphics[width=\textwidth]{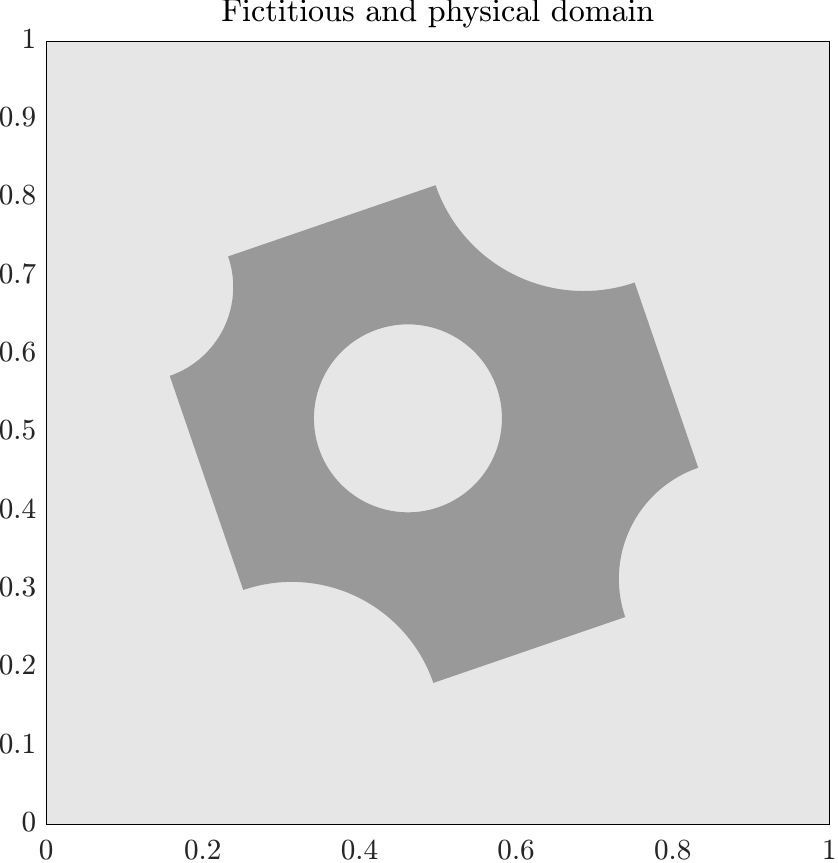}
    \caption{Fictitious domain $\widehat{\Omega}$ (light gray) and physical domain $\Omega$ (dark gray)}
    \label{fig: 2D_Laplace_rotating_lattice_geo}
     \end{subfigure}
     \hfill
     \begin{subfigure}[t]{0.48\textwidth}
    \centering
    \includegraphics[width=\textwidth]{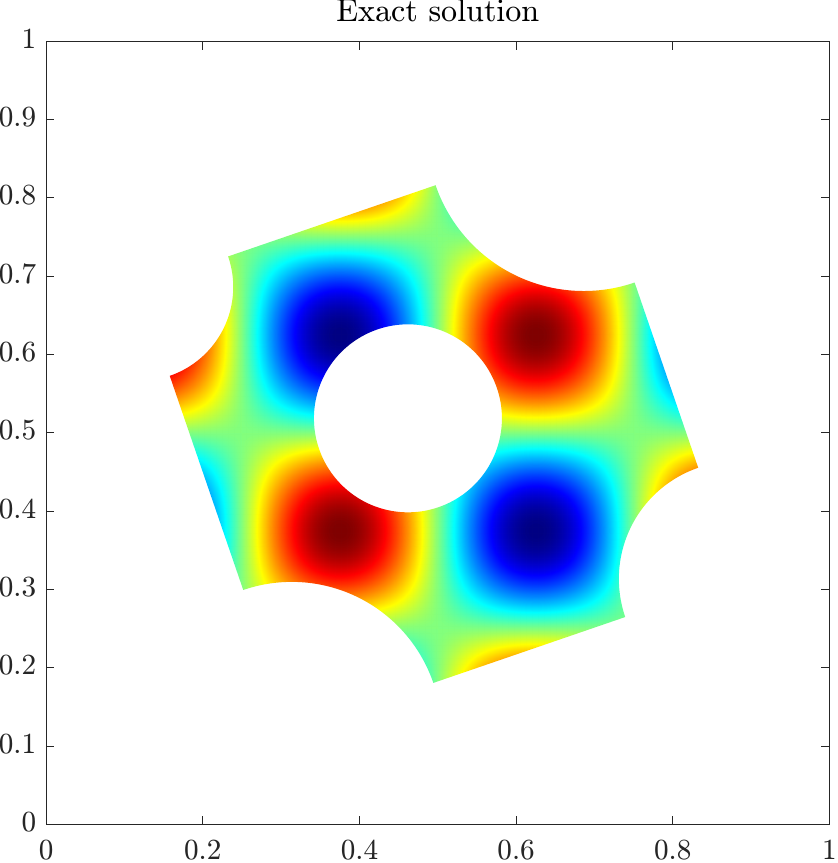}
    \caption{Solution}
    \label{fig: 2D_Laplace_rotating_lattice_solution}
     \end{subfigure}
     \hfill
    \caption{Geometry and projected function (\Cref{ex: l2_projection})}
    \label{fig: 2D_Laplace_rotating_lattice}
\end{figure}

\begin{figure}[H]
     \centering
     \begin{subfigure}[t]{0.48\textwidth}
    \centering
    \includegraphics[width=\textwidth]{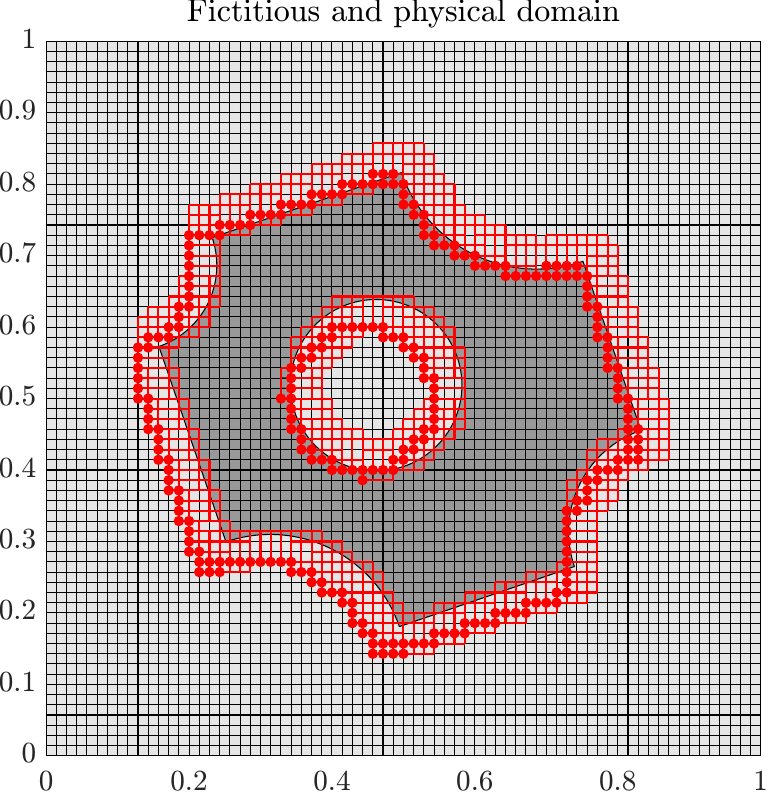}
    \caption{Full set $\Phi_C$}
    \label{fig: 2D_Laplace_rotating_lattice_supp_p2_n70_s9}
     \end{subfigure}
     \hfill
     \begin{subfigure}[t]{0.48\textwidth}
    \centering
    \includegraphics[width=\textwidth]{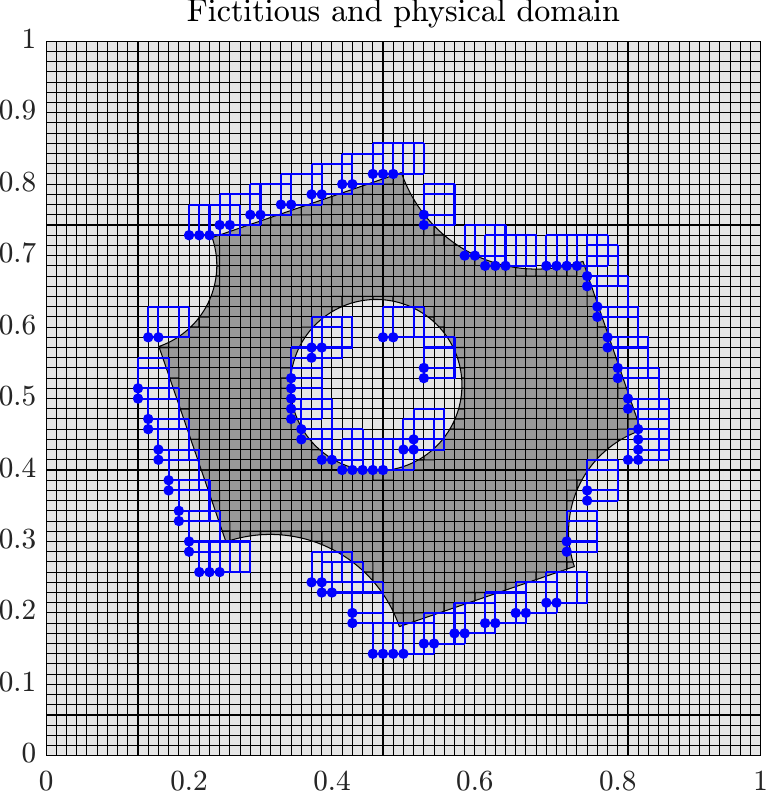}
    \caption{Reduced set $\Phi_C^r$ for $\tau=0.25$}
    \label{fig: 2D_Laplace_rotating_lattice_supp_red_p2_n70_s9_tau_0_25}
     \end{subfigure}
     \hfill
    \caption{Support of trimmed basis functions for a rotation angle of $\alpha=0.3307$ rad and $70$ subdivisions in each direction (\Cref{ex: l2_projection})}
    \label{fig: 2D_Laplace_rotating_lattice_support_p2_n70_s9}
\end{figure}

The condition number of the preconditioned systems is now computed for $100$ rotation angles uniformly spaced between $0$ and $\pi/4$. The rotation of the lattice leads to multiple cut elements and various cut configurations. The results are shown in \Cref{fig: 2D_Laplace_rotating_lattice_cond_M_Bspline_p2_Cp-1_n70,fig: 2D_Laplace_rotating_lattice_cond_M_Bspline_p3_Cp-1_n70} for degrees $p=2$ and $p=3$, respectively. The condition number of the non-preconditioned matrix grows at the expected rate of $O(\eta^{-(2p+1)})$. Although the simple Jacobi preconditioner significantly reduces the condition number, it remains highly sensitive to the cut configuration, leading to significant scatter. The SIPIC preconditioner levels it to a great extent, but not entirely. In fact, it sometimes fails to detect some quasi linear dependencies and therefore does not completely eliminate \emph{all} small eigenvalues. This is particularly clear in \Cref{fig: 2D_Laplace_rotating_lattice_eig_M_Bspline_Cp-1_n70_s9} showing the first $300$ eigenvalues of the preconditioned systems for an unfavorable rotation angle. In contrast, the condition number of the deflation and Schwarz preconditioned matrix (for the block selection strategy \eqref{eq: block_sel_cut_elem}) is completely independent of the cut configuration. Overall, our rank-reduction algorithm also performs remarkably well by eliminating most of the weakly supported basis functions while (nearly) preserving the same condition number. For $p=2$, the condition numbers are nearly identical, but for $p=3$, the Schwarz preconditioner apparently gains a slight advantage. In fact, the Schwarz preconditioner not only takes care of the trimming but also (partly) handles other sources of ill-conditioning, including the polynomial degree $p$. This property is evident from \Cref{fig: 2D_Laplace_rotating_lattice_eig_M_Bspline_p3_Cp-1_n70_s9_flat}, where a fair number of eigenvalues are detached from the rest of the spectrum, except for Schwarz. The associated eigenfunctions appeared globally supported, thereby corroborating the $p$-dependent nature of the eigenvalues. The deflation-based preconditioner was never designed with the intention of handling this source of ill-conditioning. Neither was Schwarz, but coincidentally, it still helps. Yet, its iteration count still increases significantly, and it too cannot be considered $p$-robust. In fact, none of the strategies we are aware of fully resolve the $p$-dependency, not even within multigrid methods \cite{de2020multigrid,jomo2021hierarchical}. This situation sharply contrasts with boundary-fitted discretizations of single-patch or multi-patch geometries where $p$-robust preconditioners are known \cite{gao2013kronecker,gao2014fast,loli2022easy}.

\begin{figure}[H]
     \centering
     \begin{subfigure}[t]{0.48\textwidth}
    \centering
    \includegraphics[width=\textwidth]{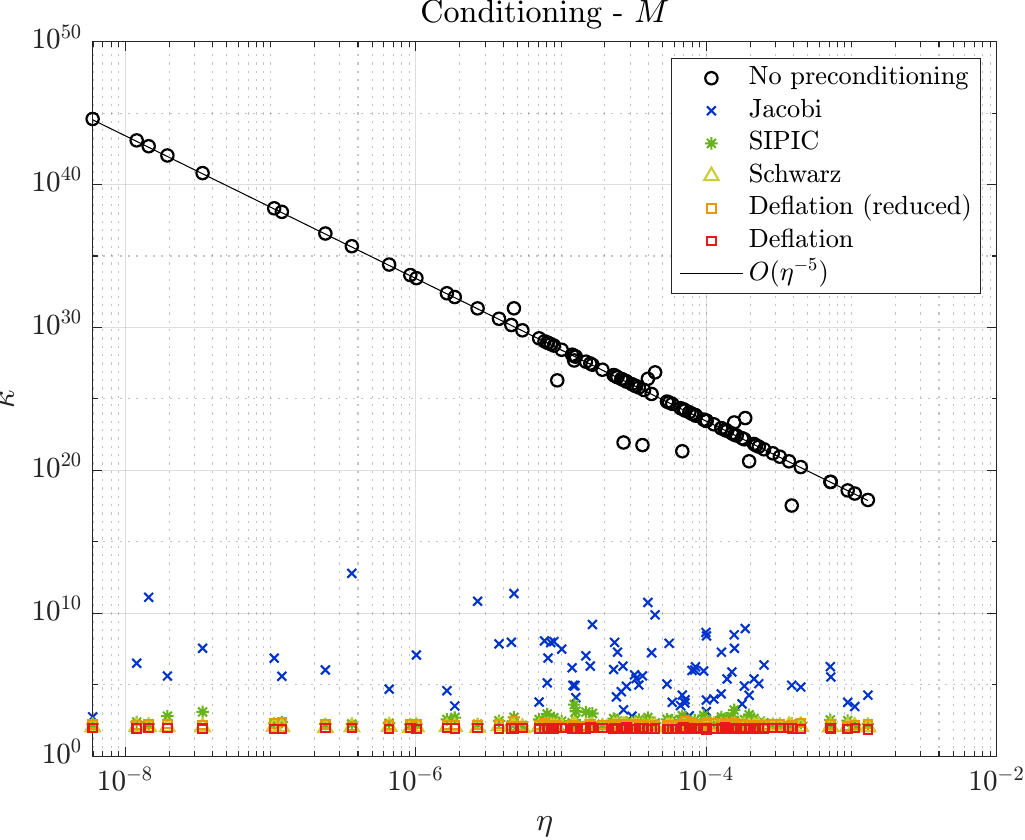}
    \caption{$p=2$}
    \label{fig: 2D_Laplace_rotating_lattice_cond_M_Bspline_p2_Cp-1_n70}
     \end{subfigure}
     \hfill
     \begin{subfigure}[t]{0.48\textwidth}
    \centering
    \includegraphics[width=\textwidth]{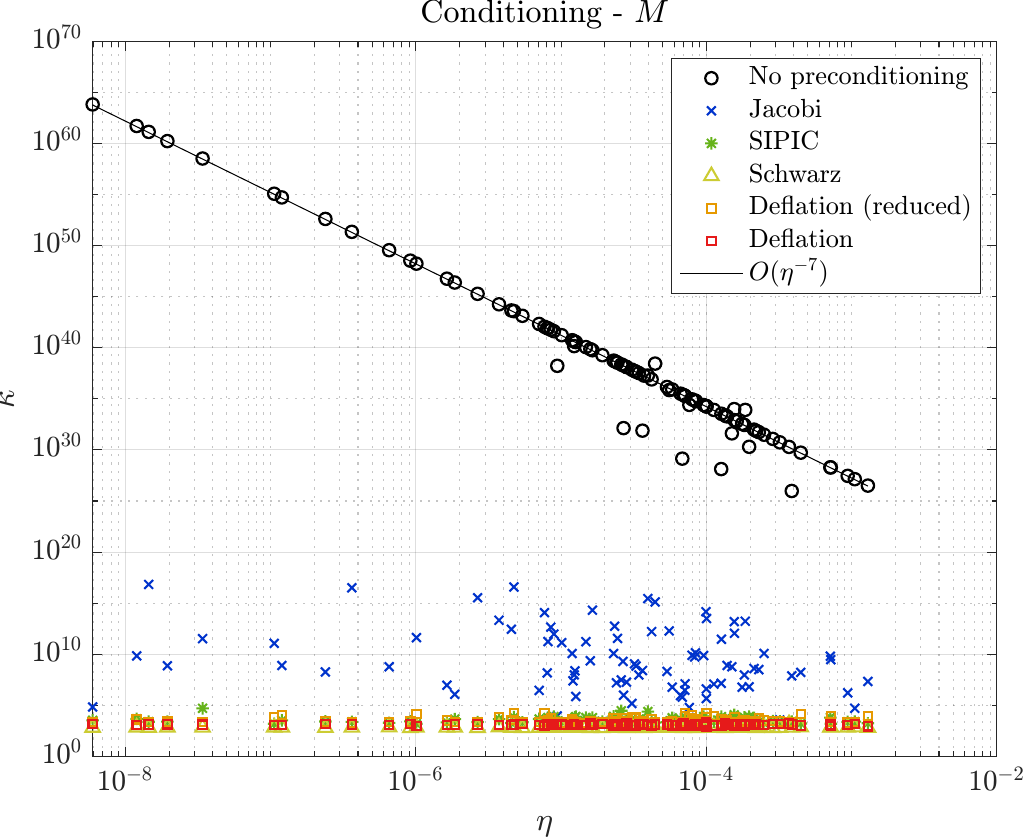}
    \caption{$p=3$}
    \label{fig: 2D_Laplace_rotating_lattice_cond_M_Bspline_p3_Cp-1_n70}
     \end{subfigure}
     \hfill
    \caption{Condition numbers for $100$ uniformly spaced rotation angles between $0$ and $\pi/4$ (\Cref{ex: l2_projection})}
    \label{fig: 2D_Laplace_rotating_lattice_cond_M_Bspline_Cp-1_n70}
\end{figure}

\begin{figure}[H]
     \centering
     \begin{subfigure}[t]{0.48\textwidth}
    \centering
    \includegraphics[width=\textwidth]{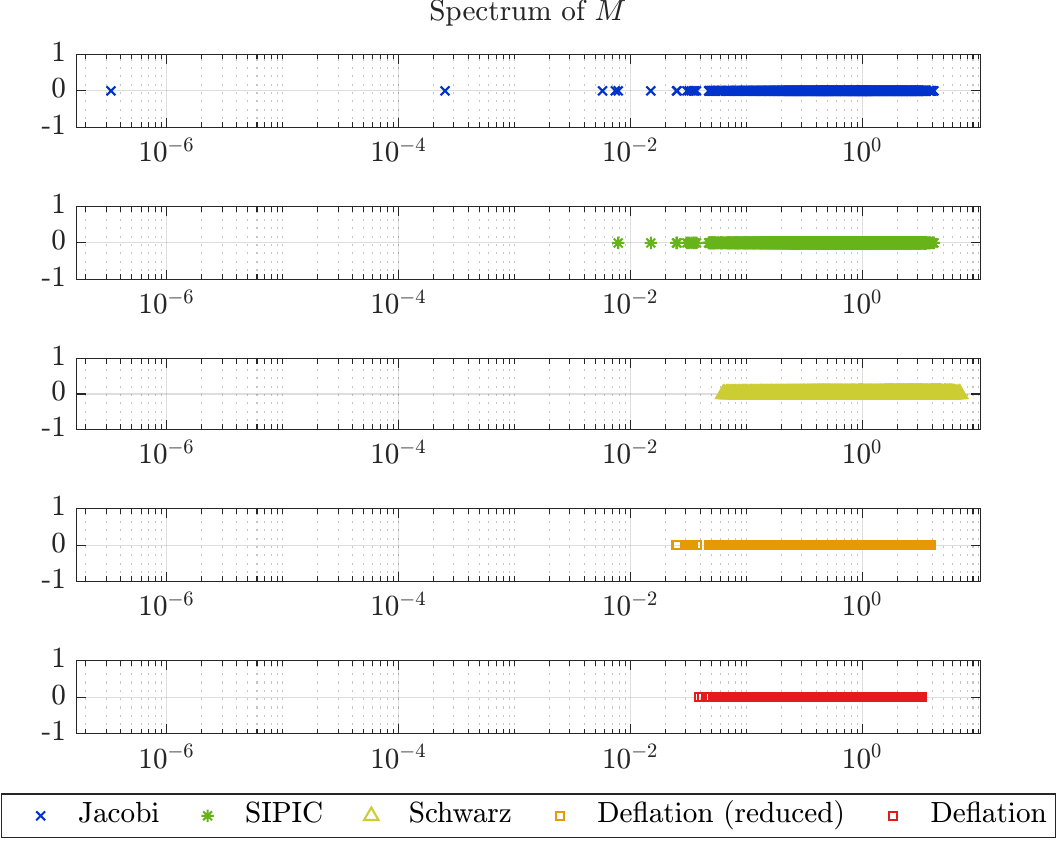}
    \caption{$p=2$}
    \label{fig: 2D_Laplace_rotating_lattice_eig_M_Bspline_p2_Cp-1_n70_s9_flat}
     \end{subfigure}
     \hfill
     \begin{subfigure}[t]{0.48\textwidth}
    \centering
    \includegraphics[width=\textwidth]{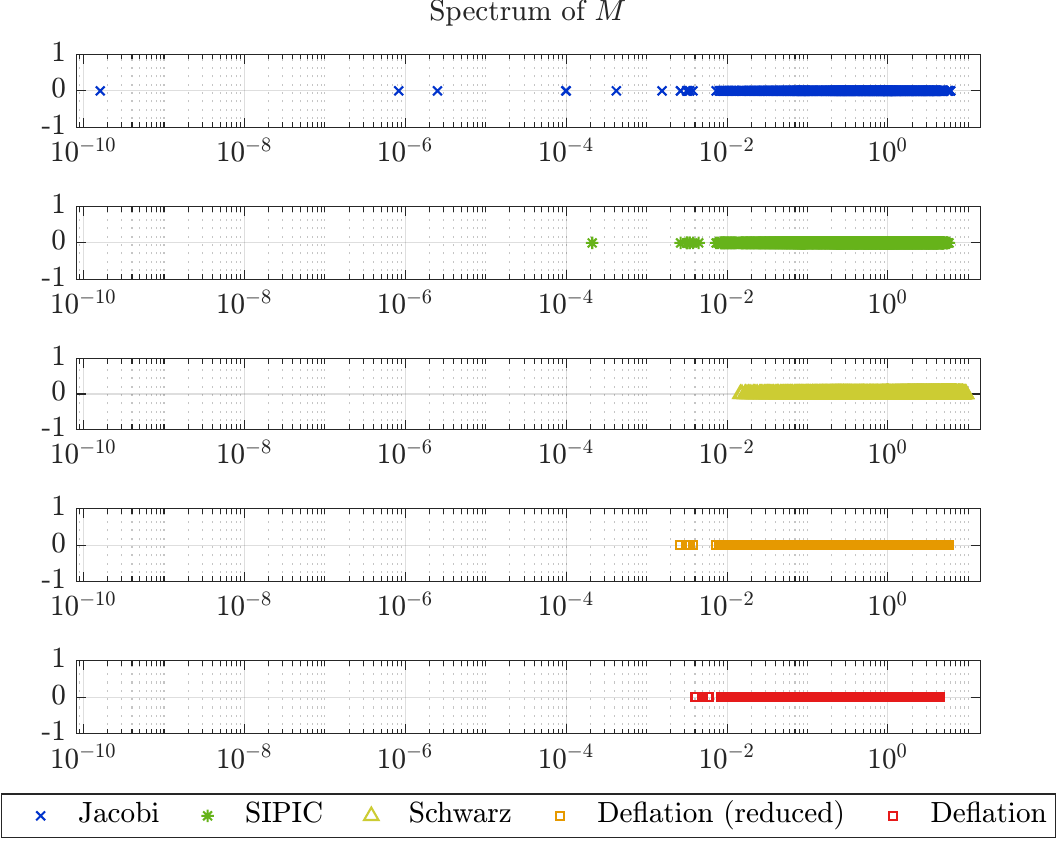}
    \caption{$p=3$}
    \label{fig: 2D_Laplace_rotating_lattice_eig_M_Bspline_p3_Cp-1_n70_s9_flat}
     \end{subfigure}
     \hfill
    \caption{First $300$ eigenvalues of the preconditioned spectrum for a rotation angle of $\alpha=0.3307$ rad (\Cref{ex: l2_projection})}
    \label{fig: 2D_Laplace_rotating_lattice_eig_M_Bspline_Cp-1_n70_s9}
\end{figure}

Now that we have studied the conditioning of the preconditioned matrix, we turn to the convergence of CG, which is monitored by computing the ``exact'' solution of the $L^2$ projection with a direct method. \Cref{fig: 2D_Laplace_rotating_lattice_conv_err_M_Bspline_Cp-1_n70} shows the convergence of the relative error in the $A$-norm, which in this case directly relates to the relative $L^2$ error
\begin{equation*}
    \frac{\|\bm{u}-\bm{u}_k\|_A}{\|\bm{u}\|_A} = \frac{\|u_h-u_{h,k}\|_{L^2}}{\|u_h\|_{L^2}},
\end{equation*}
where $u_h$ is the $L^2$ projection of $u$ and $u_{h,k} \in V_h$ is the function associated to the iterate $\bm{u}_k$. As explained in \Cref{se: dpcg}, the relative preconditioned residual, shown in \Cref{fig: 2D_Laplace_rotating_lattice_conv_prec_res_M_Bspline_Cp-1_n70}, serves to estimate the relative error. Here, the algorithm terminates when our stopping criterion \eqref{eq: stopping_condition} is met with a tolerance of $\epsilon=10^{-9}$ or when the number of iterations exceeds $1000$, whichever happens first. To avoid cluttering the figures, the errors and residuals are only shown for $20$ rotation angles uniformly spaced between $0$ and $\pi/4$. The two quantities depict analogous convergence patterns, and the trends they reveal are very clear. For $p=2$, the deflation-based preconditioner often yields the smallest iteration count, with Schwarz closely trailing behind, followed by rank-reduced deflation, SIPIC, and finally Jacobi. As expected from our condition number estimates in \Cref{se: diagonal_scaling} and \Cref{fig: 2D_Laplace_rotating_lattice_cond_M_Bspline_Cp-1_n70}, the convergence history of the Jacobi preconditioner is highly sensitive to the trimming configuration. Stagnation of the error precedes the ``discovery'' of small eigenvalues in the CG process, identified by spikes in the preconditioned residual. The same trend was already observed more than two decades ago for diffusion problems in layered rock formations \cite{vuik1999efficient,vermolen2004deflation}. However, the widely different behavior of the error or residual for increased polynomial orders signals a transition in the dominant source of ill-conditioning and aligns with earlier hints from the spectral analysis. Nevertheless, those results are not without a few surprises. Indeed, despite very similar condition numbers, the iteration counts are quite different. Most notably, a clear discrepancy appears between the iteration counts of the deflation-based preconditioners with and without rank-reduction. Although their condition numbers are almost identical, the iteration counts of the rank-reduced version are on the high end of the Schwarz preconditioner. Instead, for $p=3$, the Schwarz preconditioner converges the fastest. However, this is expected from its smaller condition number in \Cref{fig: 2D_Laplace_rotating_lattice_cond_M_Bspline_p3_Cp-1_n70}. Obviously, those advantages must be weighted together with the workload in applying the preconditioner. Applying the Schwarz preconditioner generally requires solving many small linear systems, while applying deflation-based preconditioners entails solving a single but (usually) larger system. It is impossible to draw general conclusions on the relative performance of those methods: depending on the implementation, the block selection strategy, and the geometry, either one or the other may be cheaper. 

\begin{figure}[H]
     \centering
     \begin{subfigure}[t]{0.48\textwidth}
    \centering
    \includegraphics[width=\textwidth]{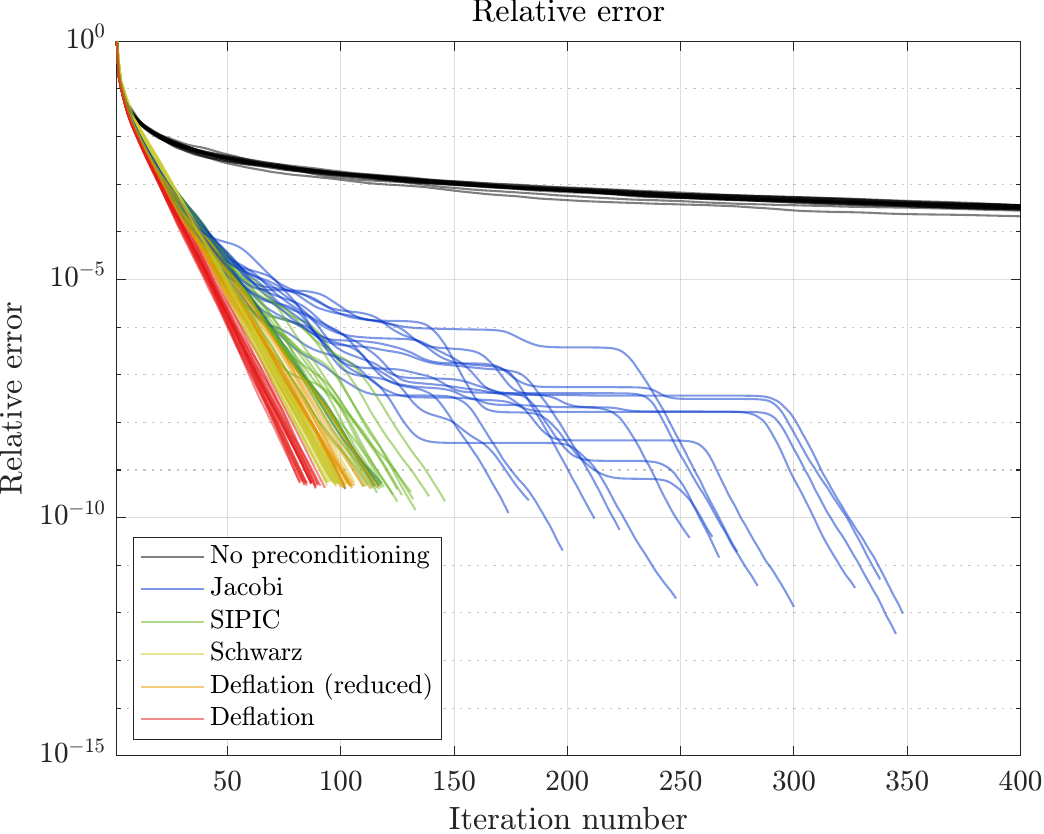}
    \caption{$p=2$}
    \label{fig: 2D_Laplace_rotating_lattice_conv_err_M_Bspline_p2_Cp-1_n70}
     \end{subfigure}
     \hfill
     \begin{subfigure}[t]{0.48\textwidth}
    \centering
    \includegraphics[width=\textwidth]{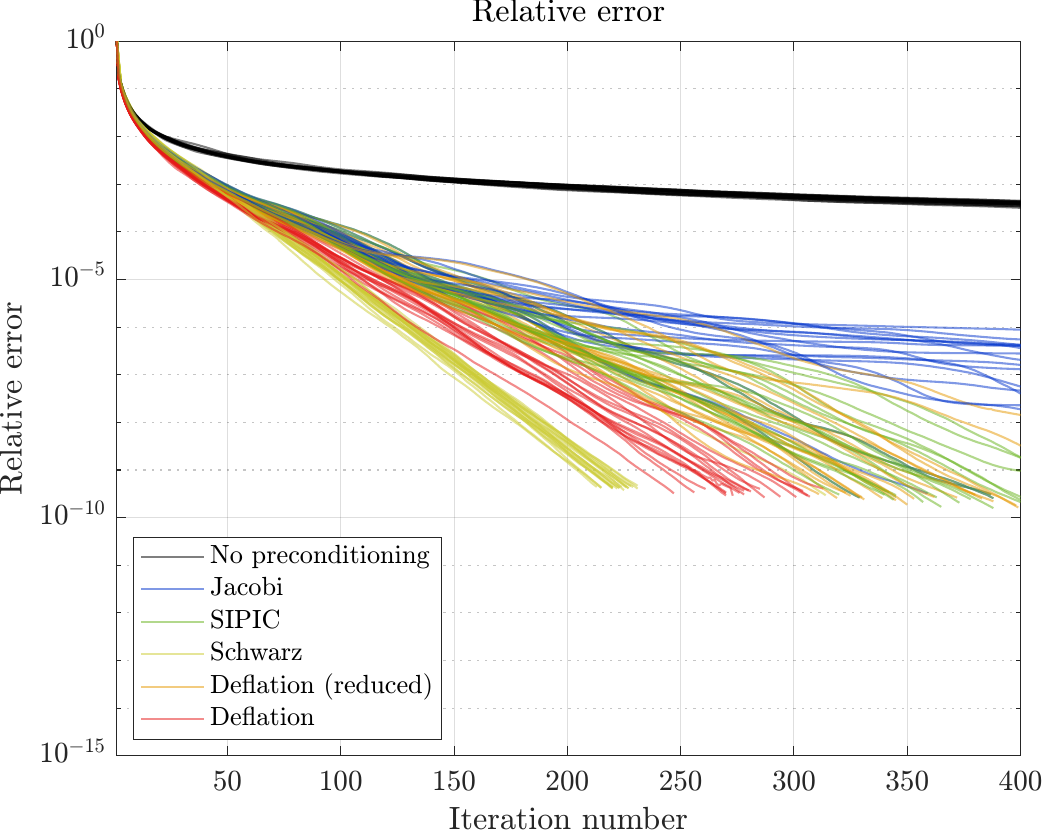}
    \caption{$p=3$}
    \label{fig: 2D_Laplace_rotating_lattice_conv_err_M_Bspline_p3_Cp-1_n70}
     \end{subfigure}
     \hfill
    \caption{Convergence of the relative error (\Cref{ex: l2_projection})}
    \label{fig: 2D_Laplace_rotating_lattice_conv_err_M_Bspline_Cp-1_n70}
\end{figure}

\begin{figure}[H]
     \centering
     \begin{subfigure}[t]{0.48\textwidth}
    \centering
    \includegraphics[width=\textwidth]{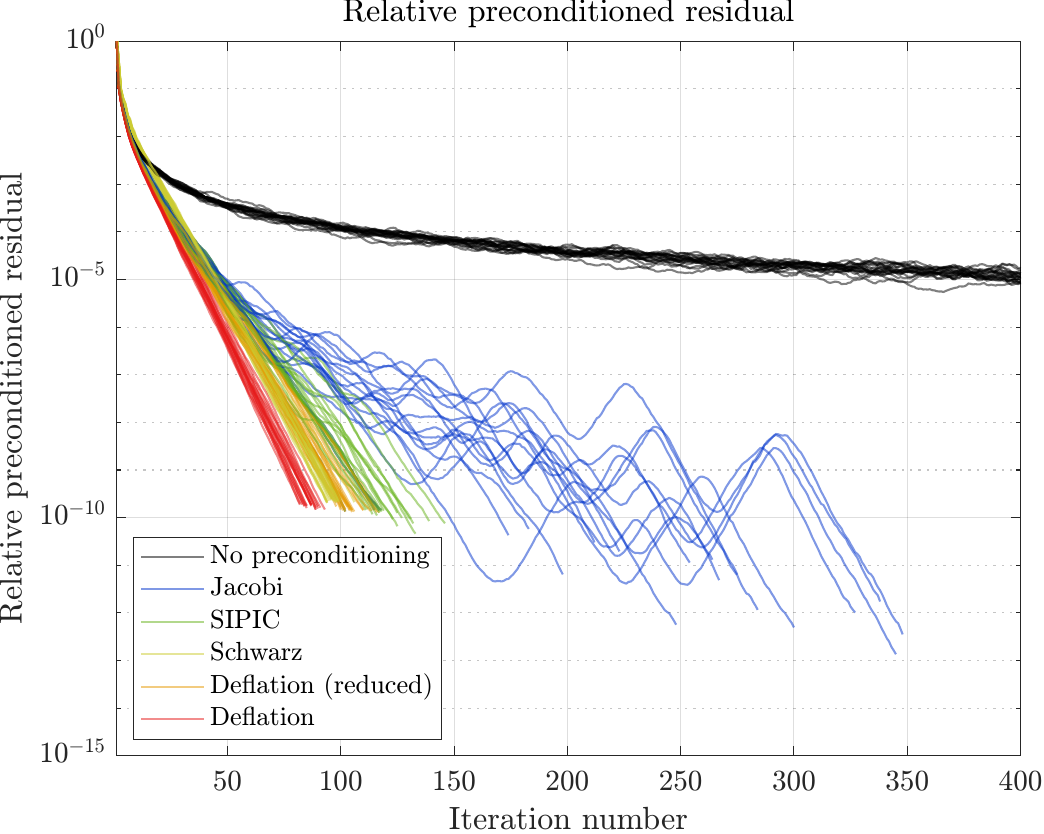}
    \caption{$p=2$}
    \label{fig: 2D_Laplace_rotating_lattice_conv_prec_res_M_Bspline_p2_Cp-1_n70}
     \end{subfigure}
     \hfill
     \begin{subfigure}[t]{0.48\textwidth}
    \centering
    \includegraphics[width=\textwidth]{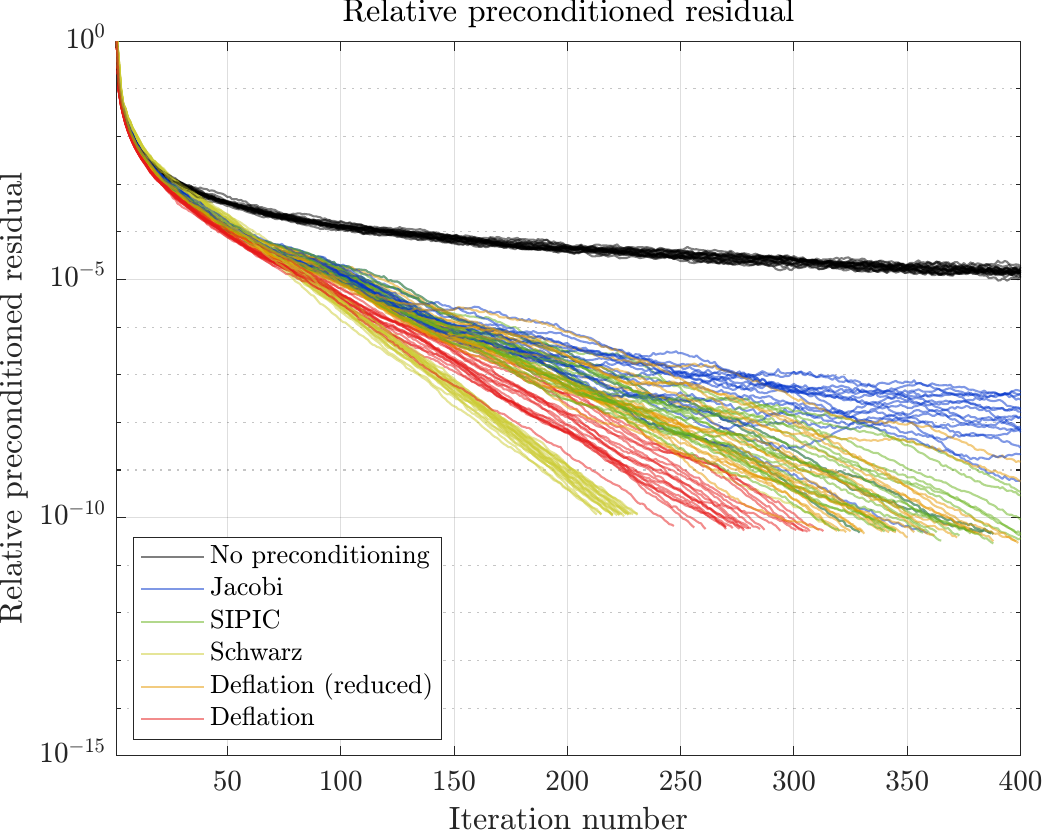}
    \caption{$p=3$}
    \label{fig: 2D_Laplace_rotating_lattice_conv_prec_res_M_Bspline_p3_Cp-1_n70}
     \end{subfigure}
     \hfill
    \caption{Convergence of the relative preconditioned residual (\Cref{ex: l2_projection})}
    \label{fig: 2D_Laplace_rotating_lattice_conv_prec_res_M_Bspline_Cp-1_n70}
\end{figure}
\end{example}

\begin{example}[Poisson problem]
\label{ex: poisson_problem}
For this second example, we solve the standard Poisson problem \eqref{eq: discrete_weak_form_elliptic} on a square plate with a cut-out in the middle as shown in \Cref{fig: 2D_Laplace_extrusion_mesh}. The cut-out consists of two circular arcs of radius $r=\sqrt{5}h-\delta$ centered in $\bm{c}_1=(0.5,0.25)$ and $\bm{c}_2=(0.5,0.75)$ and connected by vertical line segments. A uniform mesh of classical $C^0$ quadratic Lagrange finite elements of mesh size $h=1/56$ is laid in the background, thereby reproducing along the circular arcs the problematic trimmed configuration encountered in \Cref{ex: square_with_hole}. The finite element discretization leads to a system of size $12146$ and for studying the convergence of the iterative solvers, we consider the manufactured solution
\begin{equation*}
    u(x,y) = x(1-x)\sin(3\pi x)^2\sin(\pi y)
\end{equation*}
shown in \Cref{fig: 2D_Laplace_extrusion_exact_sol}. It satisfies homogeneous Dirichlet boundary conditions on the left and right edges, and we prescribe Neumann boundary conditions on all other edges, including the internal ones.

\begin{figure}[H]
     \centering
     \begin{subfigure}[t]{0.48\textwidth}
    \centering
    \includegraphics[width=\textwidth]{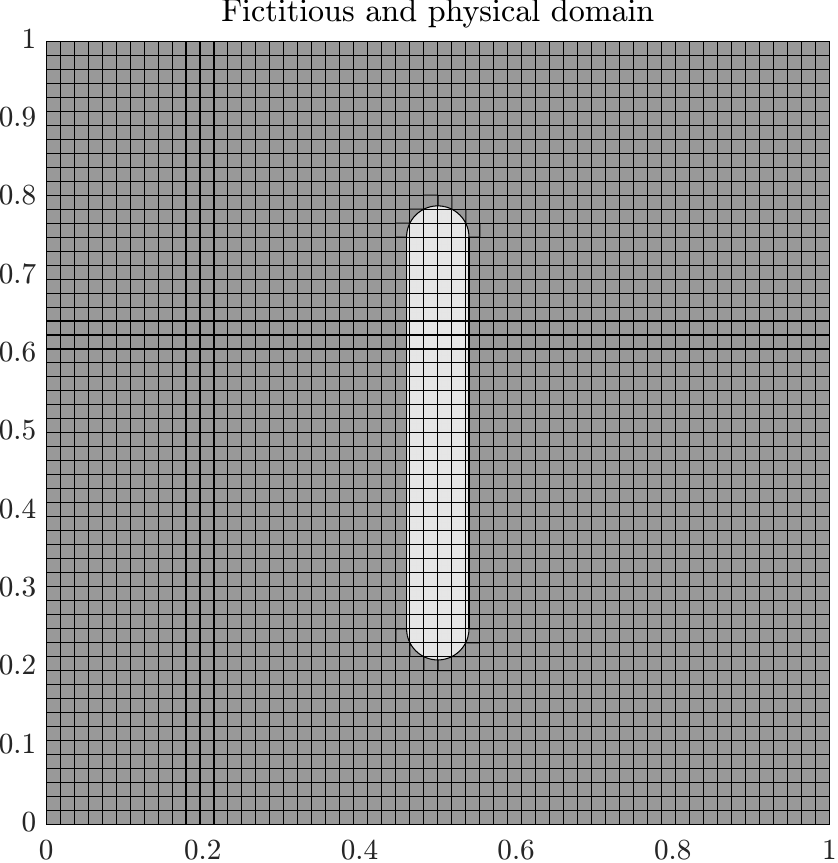}
    \caption{Fictitious domain $\widehat{\Omega}$ (light gray) and physical domain $\Omega$ (dark gray)}
    \label{fig: 2D_Laplace_extrusion_mesh}
     \end{subfigure}
     \hfill
     \begin{subfigure}[t]{0.48\textwidth}
    \centering
    \includegraphics[width=\textwidth]{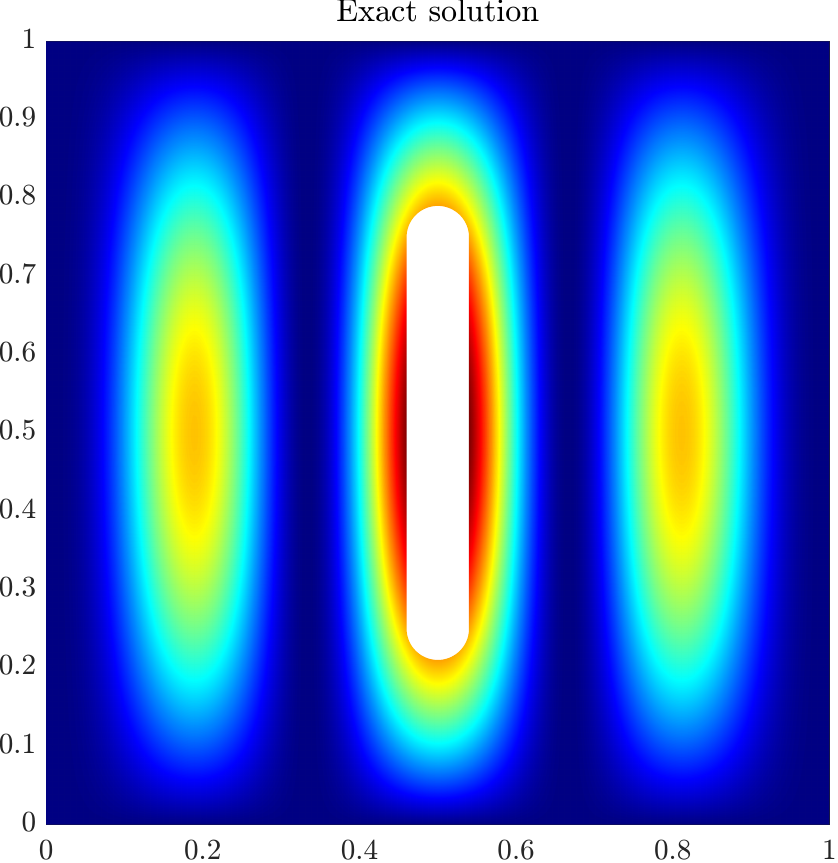}
    \caption{Solution}
    \label{fig: 2D_Laplace_extrusion_exact_sol}
     \end{subfigure}
     \hfill
    \caption{Geometry and manufactured solution (\Cref{ex: poisson_problem})}
    \label{fig: 2D_Laplace_extrusion}
\end{figure}

This example compares the robustness of the Jacobi, SIPIC, Schwarz, and deflation preconditioners in a realistic setting. The index blocks of the Schwarz preconditioner regroup the indices of basis functions overlapping on cut elements \cite{de2019preconditioning} and the local inverses are approximately formed by discarding eigenvalues below $10^{-14}$ \cite{jomo2019robust}. Similarly to \Cref{ex: square_with_hole} and as shown in \Cref{fig: 2D_Laplace_extrusion_cond_K_Lagrange_p2}, the condition number of the preconditioned stiffness matrix increases as $\delta \to 0$, except for the deflation strategy. Unfortunately, the extensive overlap of basis functions in the $C^0$ case prevents eliminating any of them with the rank-reduction technique. Thus, the size of the coarse matrix in this case $(292)$ is larger but still relatively small in comparison to the size of the system $(12146)$. The differences in condition number are reflected through the preconditioned spectrum, partly shown in \Cref{fig: 2D_Laplace_extrusion_eig_K_Lagrange_p2_s20}. While the SIPIC and Schwarz preconditioners successfully eliminate the smallest eigenvalue, two small eigenvalues remain and also converge to zero, although at a milder rate. This explains the slower growth of the condition number in \Cref{fig: 2D_Laplace_extrusion_cond_K_Lagrange_p2}. In contrast, our deflation technique eliminates \emph{all} trimming-related eigenvalues. The remaining small eigenvalues are dependent on other discretization parameters and must be taken care of with $h,p$-robust preconditioners.

\begin{figure}[H]
     \centering
     \begin{subfigure}[t]{0.48\textwidth}
    \centering
    \includegraphics[width=\textwidth]{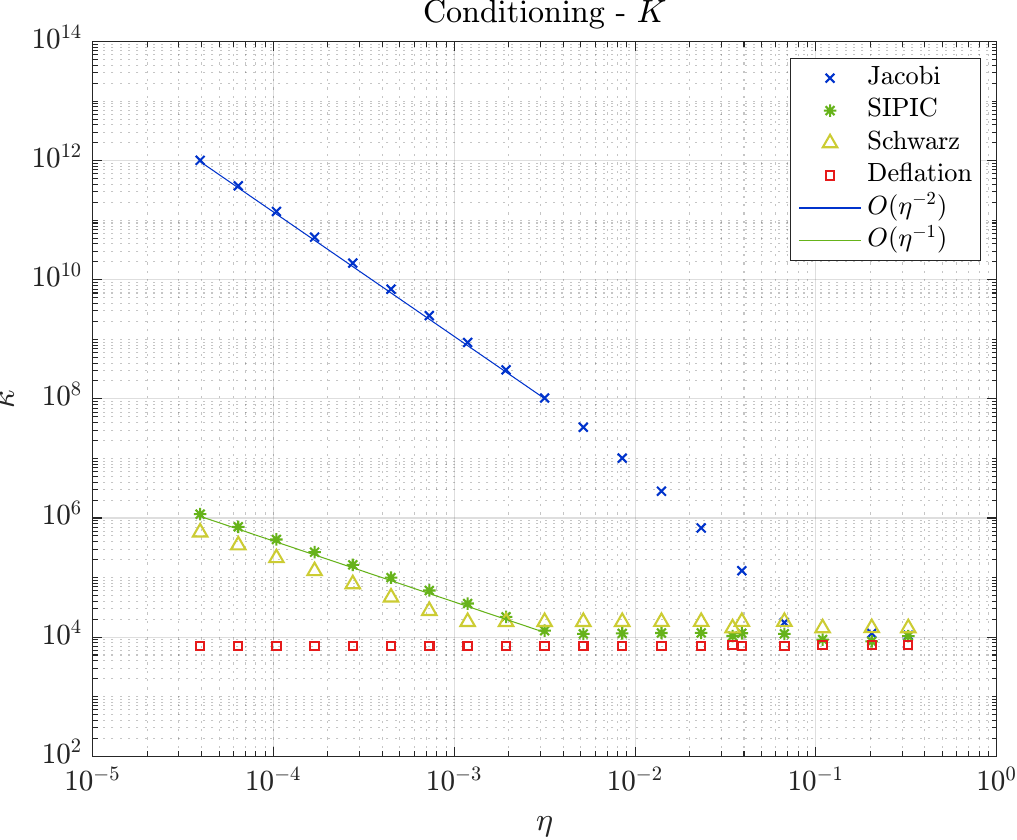}
    \caption{Condition numbers for $20$ logarithmically spaced values of $\delta$ between $10^{-4}$ and $10^{-2}$}
    \label{fig: 2D_Laplace_extrusion_cond_K_Lagrange_p2}
     \end{subfigure}
     \hfill
     \begin{subfigure}[t]{0.48\textwidth}
    \centering
    \includegraphics[width=0.9\textwidth]{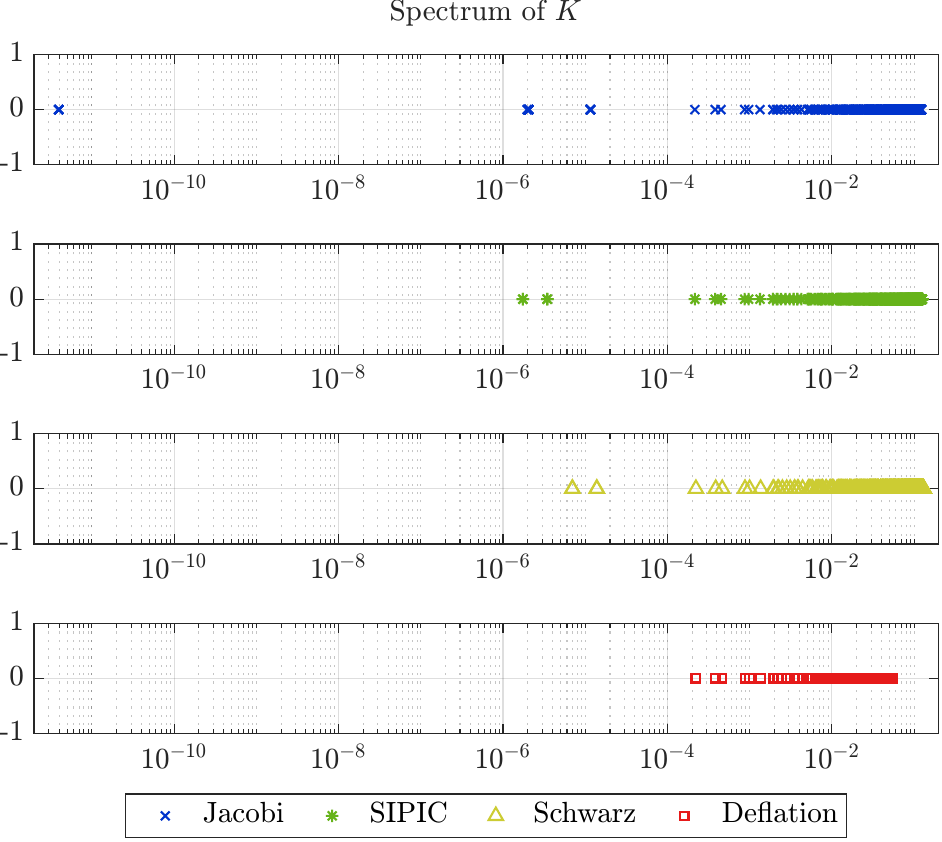}
    \caption{First $500$ eigenvalues of the preconditioned spectrum for $\delta=10^{-4}$}
    \label{fig: 2D_Laplace_extrusion_eig_K_Lagrange_p2_s20}
     \end{subfigure}
     \hfill
    \caption{Condition numbers and eigenvalues of the preconditioned system matrices (\Cref{ex: poisson_problem})}
    \label{fig: 2D_Laplace_extrusion_eig_K_Lagrange_p2}
\end{figure}

An increase of the condition number is expected to translate into an increase of the iteration count. This is confirmed in \Cref{fig: 2D_Laplace_extrusion_conv_err_K_Lagrange_p2,fig: 2D_Laplace_extrusion_conv_prec_res_K_Lagrange_p2} showing the convergence of the relative error and relative preconditioned residuals, respectively. Note that the relative error in the $A$-norm reduces in this case to the relative error in the $H^1$ seminorm:
\begin{equation*}
    \frac{\|\bm{u}-\bm{u}_k\|_A}{\|\bm{u}\|_A} = \frac{|u_h-u_{h,k}|_{H^1}}{|u_h|_{H^1}},
\end{equation*}
where the ``exact'' discrete solution of the Poisson problem, represented by the coefficient vector $\bm{u}$, was computed with a direct solver. Similarly to the previous example, we set a tolerance of $\epsilon=10^{-9}$ and prematurely terminate the algorithm if the iteration count exceeds $10000$. Clearly, the increased condition number delays the convergence of the relative error, which decays in a stepwise manner. Surprisingly, the iteration counts for the Schwarz preconditioner are generally larger than those of SIPIC, despite the smaller condition number. The preconditioned residual behaves more sporadically but follows overall a similar trend. In contrast, the convergence of the relative error or preconditioned residual for the deflation preconditioner is almost insensitive to $\delta$. However, the deflation preconditioner might eventually run into trouble if the condition number of the Jacobi preconditioned matrix becomes excessively large (e.g., greater than $10^{15}$). Indeed, similarly to other preconditioning techniques, the deflation strategy is built on top of Jacobi, and if the condition number for the latter becomes too large, it may compromise the accuracy of the solution to the coarse system. As a matter of fact, early signs of instabilities are already visible in \Cref{fig: 2D_Laplace_extrusion_conv_err_K_Lagrange_p2} and appear for a condition number of roughly $10^{12}$.

\begin{figure}[H]
     \centering
     \begin{subfigure}[t]{0.48\textwidth}
    \centering
    \includegraphics[width=\textwidth]{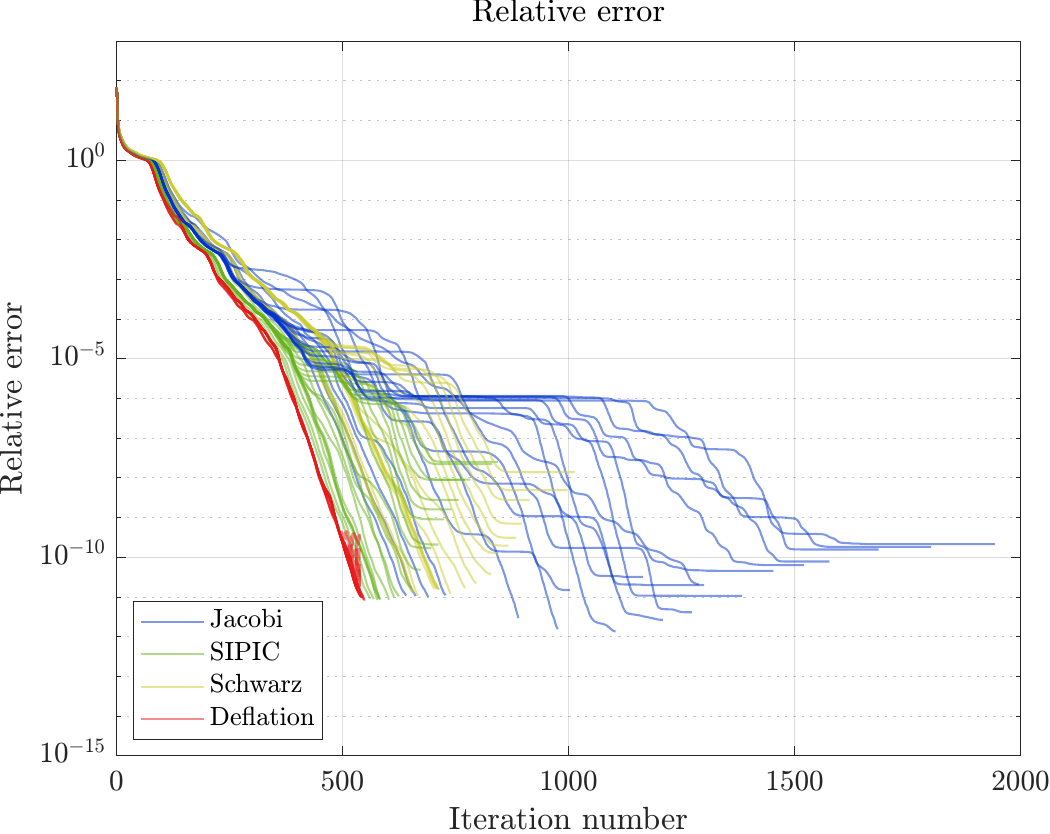}
    \caption{Relative error}
    \label{fig: 2D_Laplace_extrusion_conv_err_K_Lagrange_p2}
     \end{subfigure}
     \hfill
     \begin{subfigure}[t]{0.48\textwidth}
    \centering
    \includegraphics[width=\textwidth]{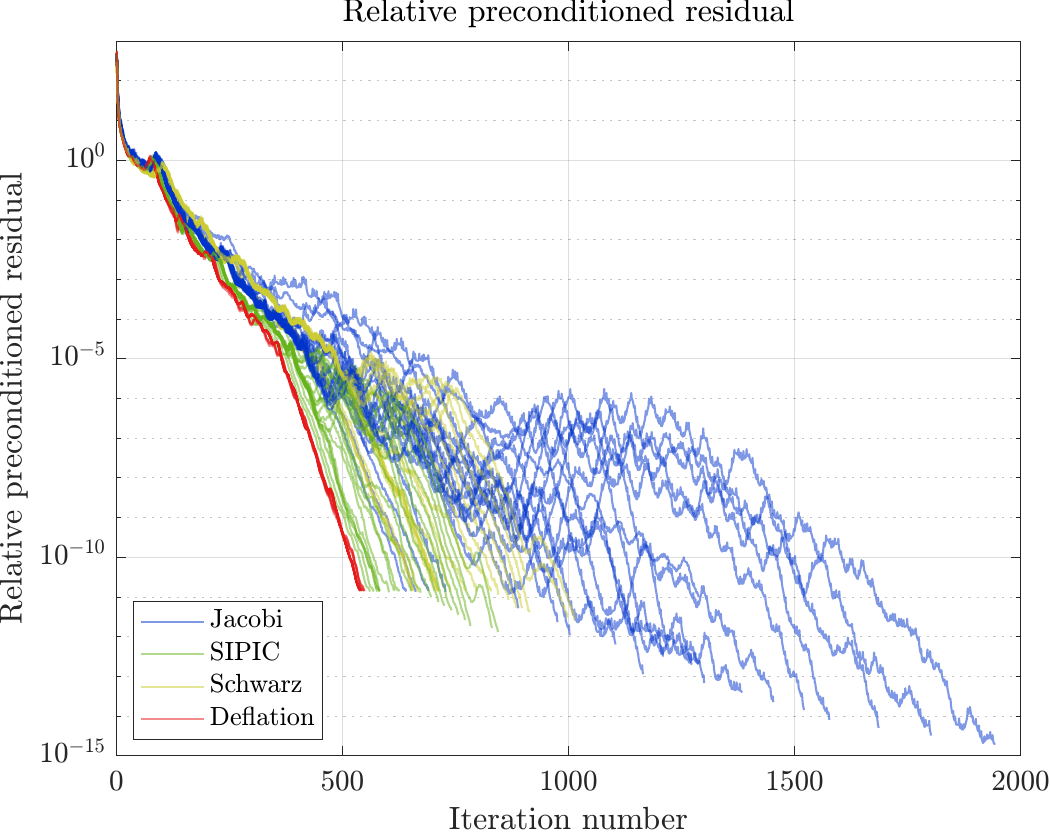}
    \caption{Relative preconditioned residual}
    \label{fig: 2D_Laplace_extrusion_conv_prec_res_K_Lagrange_p2}
     \end{subfigure}
     \hfill
    \caption{Convergence of the relative error and preconditioned residuals (\Cref{ex: poisson_problem})}
    \label{fig: 2D_Laplace_extrusion_conv_err_prec_res_K_Lagrange_p2}
\end{figure}
\end{example}

\begin{remark}
Deflation techniques are sometimes stabilized by shifting undesirable eigenvalues elsewhere in the spectrum \cite{tang2009comparison,dwarka2022scalable}. However, the implementation and analysis of those so-called adapted (or modified) deflation techniques are much more intricate, and given that the instabilities only arise for excessively large condition numbers, they should be taken care of with simpler techniques; e.g., by removing basis functions \cite{elfverson2018cutiga,verhoosel2015image}.
\end{remark}

\begin{example}[Implicit time stepping]
\label{ex: implicit_time_stepping}
In this last example, we explore the performance of the preconditioners for a wave propagation problem, described by the standard wave equation
\begin{align}
\partial_{tt} u - \Delta u &=f & &\text{in } \Omega \times (0,T], \label{eq: wave_equation} \\
 \partial_n u &=0 & &\text{on } \partial \Omega \times (0,T], \nonumber \\
 u(0)&=u_0 & &\text{in } \Omega,  \nonumber \\
 \partial_t u(0)&=v_0 & &\text{in } \Omega, \nonumber
\end{align}
where $u_0$ and $v_0$ are two initial conditions, and we assume homogeneous Neumann boundary conditions for simplicity. Finite element discretizations of \eqref{eq: wave_equation} commonly lead to a large coupled system of ordinary differential equations (ODEs) \cite{hughes2012finite,quarteroni2009numerical}
\begin{align}
\label{eq: semi_discrete_pb}
\begin{split}
M\ddot{\bm{u}}(t) + K\bm{u}(t) &= \bm{f}(t) \qquad \text{for } t \in [0,T], \\
\bm{u}(0) &= \bm{u}_0,\\
\dot{\bm{u}}(0) &= \bm{v}_0.
\end{split}
\end{align} 
Computing an approximate solution with implicit time stepping schemes requires solving a linear system $A\bm{x}=\bm{b}$ at each time step, where $A = a_1M + a_2K$ is a linear combination of the mass and stiffness matrices. For instance, for the Trapezoidal rule (an implicit unconditionally stable Newmark method), $A=M+\beta \Delta t^2 K$, where $\Delta t$ is the step size and $\beta=1/4$ is a parameter of the method. In contrast, with explicit time stepping, the system matrix reduces to the mass matrix, which is also the foundation for mass lumping techniques \cite{voet2023mathematical,stoter2023critical}.

This example focuses on implicit time stepping and reproduces the steps necessary in simulating the propagation of a wave in an indented spiky structure shown in \Cref{fig: 2D_Laplace_waveguide_mesh_n32_48}, originally inspired from a perforated waveguide in \cite[Fig. 13]{eisentrager2024eigenvalue}. The spacing of the spikes mimics the pathological configuration of \Cref{ex: 3ridges} constructed for maximally smooth cubic B-splines and generalizes it to multiple aligned spikes. We consider those same discretization parameters in this example. The tip of a spike resembles a square of side length $\delta$ centered within an element (\Cref{fig: 2D_Laplace_waveguide_mesh_n32_48_detail}), thereby imitating \ref{conf: middle_cut}.

\begin{figure}[H]
     \centering
     \begin{subfigure}[t]{0.48\textwidth}
    \centering
    \includegraphics[width=\textwidth]{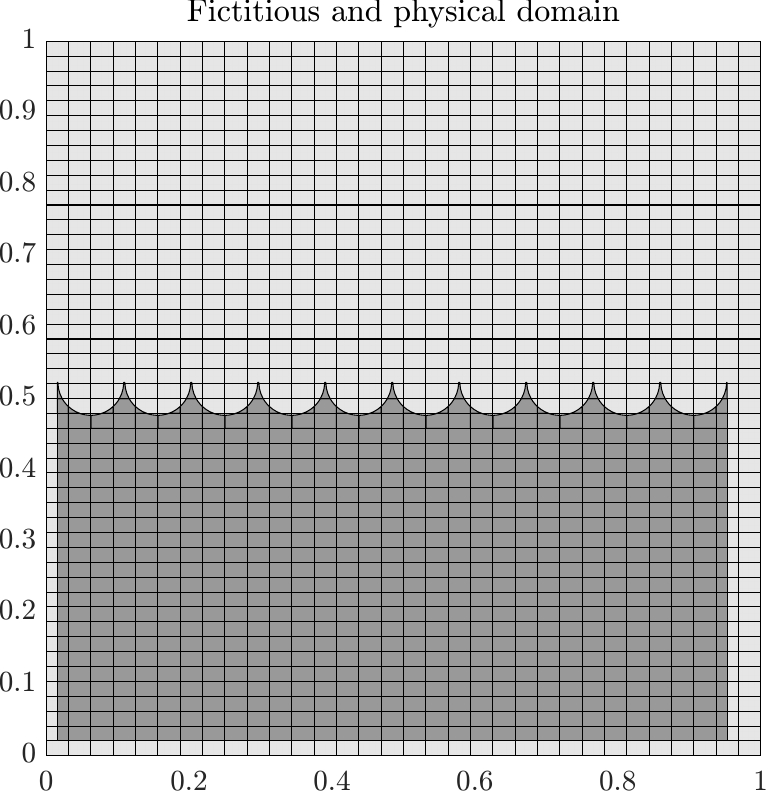}
    \caption{Indented structure}
    \label{fig: 2D_Laplace_waveguide_mesh_n32_48}
     \end{subfigure}
     \hfill
     \begin{subfigure}[t]{0.48\textwidth}
    \centering
    \includegraphics[width=\textwidth]{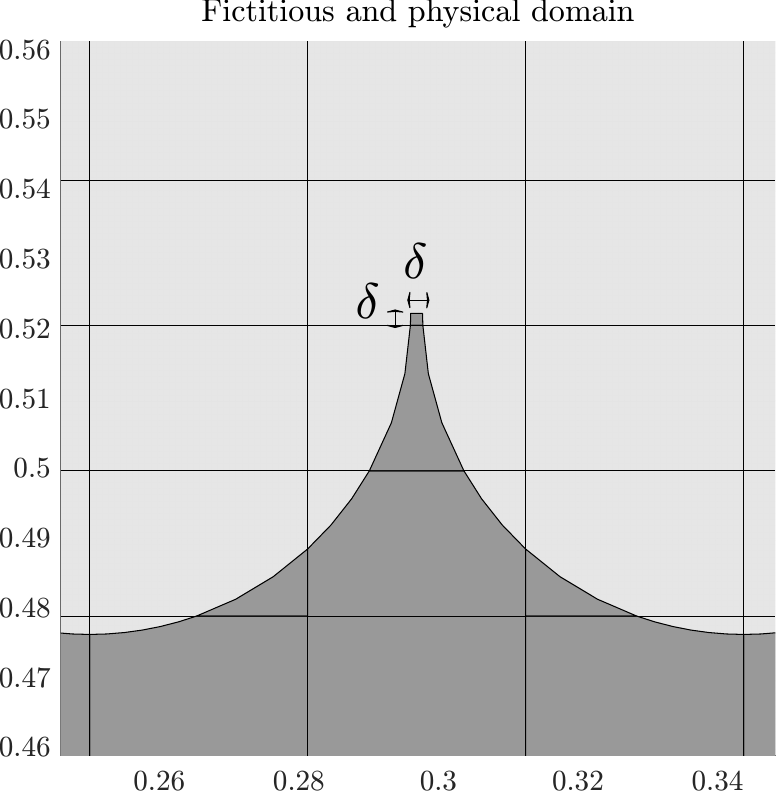}
    \caption{Detail of a ridge}
    \label{fig: 2D_Laplace_waveguide_mesh_n32_48_detail}
     \end{subfigure}
     \hfill
    \caption{Waveguide-inspired indented structure (\Cref{ex: implicit_time_stepping})}
    \label{fig: 2D_Laplace_waveguide_mesh}
\end{figure}

For simulating a propagating wave, we set $f=0$ and prescribe as initial data $u_0(x,y)=\mathrm{e}^{-(\frac{x-x_c}{\sigma})^2}$ and $v_0=0$ with $x_c=0.4844$ and $\sigma=0.1$. This creates a vertical wave front initially centered at $x_c$ and propagating in two opposite directions toward the boundaries. The simulation ends at time $T=0.4$, when the wave reaches the boundaries. Solution snapshots computed with the Trapezoidal rule and $600$ time steps are shown in \Cref{fig: 2D_Laplace_waveguide_snapshots_n32_48}.

\begin{figure}[H]
    \centering
    \includegraphics[width=\textwidth]{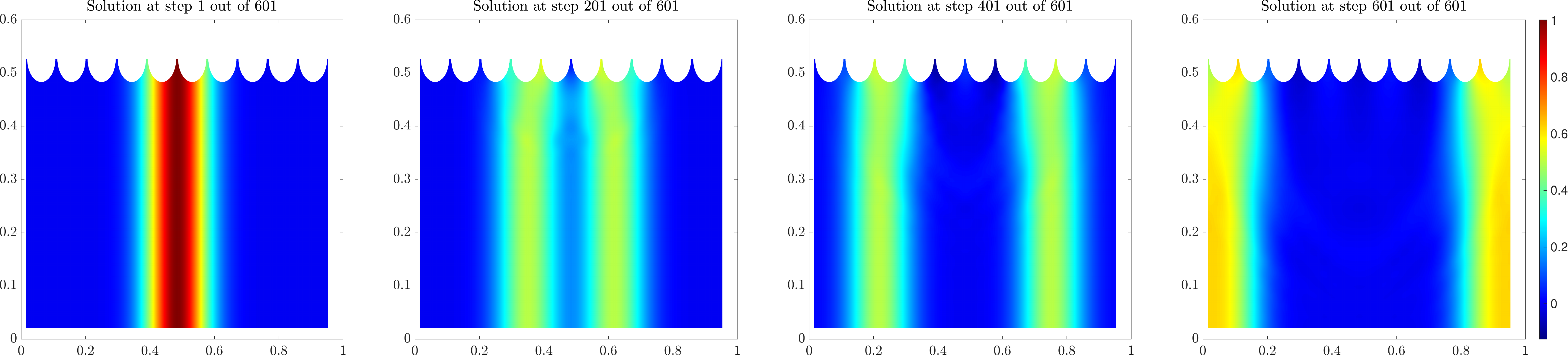}
    \caption{Solution snapshots (\Cref{ex: implicit_time_stepping})}
    \label{fig: 2D_Laplace_waveguide_snapshots_n32_48}
\end{figure}

We now turn to the iterative solution of the linear systems arising in the time stepping process for increasingly small values of $\delta$. For the mesh size shown in \Cref{fig: 2D_Laplace_waveguide_mesh_n32_48}, the linear systems have $952$ unknowns. For the Schwarz preconditioner, we choose the advanced block selection strategy \eqref{eq: block_sel_supp_int} based on intersecting supports, which performed best in \Cref{ex: 3ridges}. As shown in \Cref{fig: 2D_Laplace_waveguide_cond_Bspline_p3_Cp-1_n32_48}, the condition number of the Jacobi and SIPIC preconditioned systems increases at the same rate as $\delta \to 0$, although SIPIC is better by a constant factor. The Schwarz preconditioner together with the advanced block selection strategy \eqref{eq: block_sel_supp_int} reduces the growth of the condition number to a linear rate. In contrast, the condition number for the deflation preconditioned matrix remains independent of $\delta$. The same holds true with the rank-reduction technique, although the condition number is offset by a constant.
The preconditioned spectra for $\delta=10^{-3.5}$ are shown in \Cref{fig: 2D_Laplace_waveguide_eig_A_Bspline_p3_Cp-1_n32_48_s20}. One of the striking differences with previous examples is the relatively large number of small eigenvalues for the Jacobi preconditioned matrix. Their number is most likely related to the number of spikes and to the discretization parameters. Although the SIPIC preconditioner achieves a better clustering of the eigenvalues, they remain relatively small, and SIPIC only reduces the condition number by a constant factor. The Schwarz preconditioner with the advanced block selection strategy \eqref{eq: block_sel_supp_int} is much more effective at reducing the condition number, but it still increases at a milder linear rate. This deficiency is apparently caused by a single small eigenvalue around $10^{-4}$ in \Cref{fig: 2D_Laplace_waveguide_eig_A_Bspline_p3_Cp-1_n32_48_s20}.

\begin{figure}[H]
     \centering
     \begin{subfigure}[t]{0.48\textwidth}
    \centering
    \includegraphics[width=\textwidth]{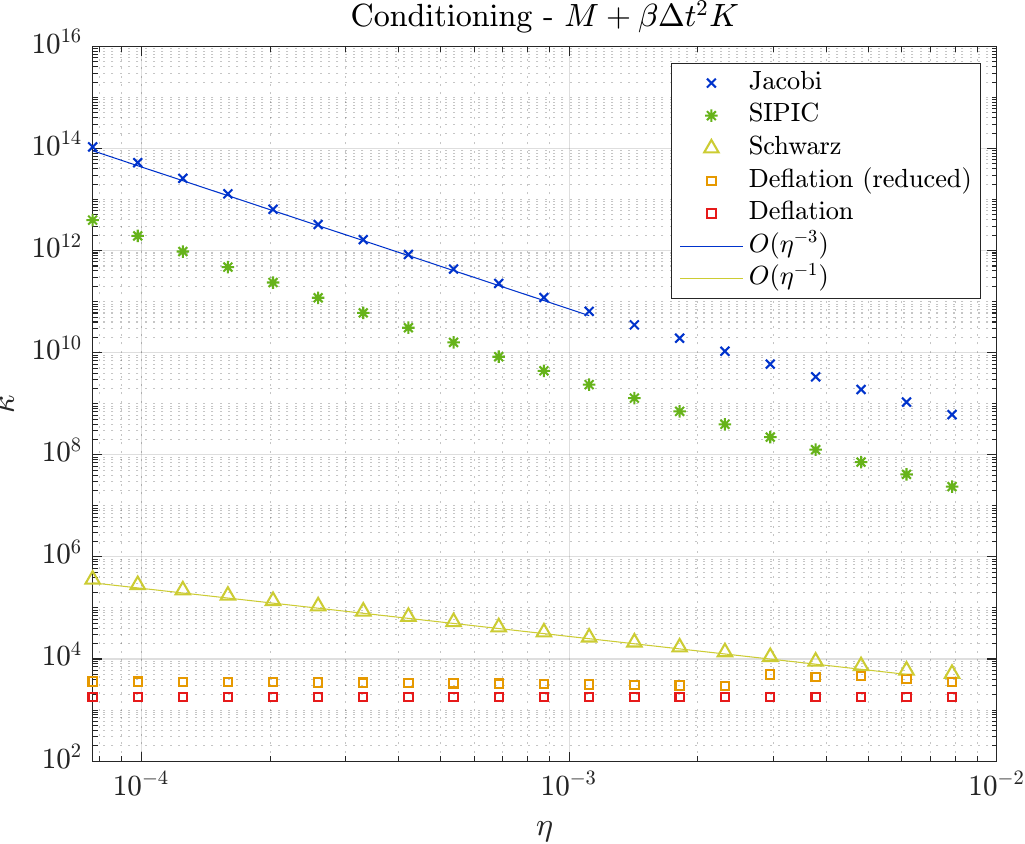}
    \caption{Condition numbers for $20$ logarithmically spaced values of $\delta$ between $10^{-3.5}$ and $10^{-2.5}$}
    \label{fig: 2D_Laplace_waveguide_cond_Bspline_p3_Cp-1_n32_48}
     \end{subfigure}
     \hfill
     \begin{subfigure}[t]{0.48\textwidth}
    \centering
    \includegraphics[width=\textwidth]{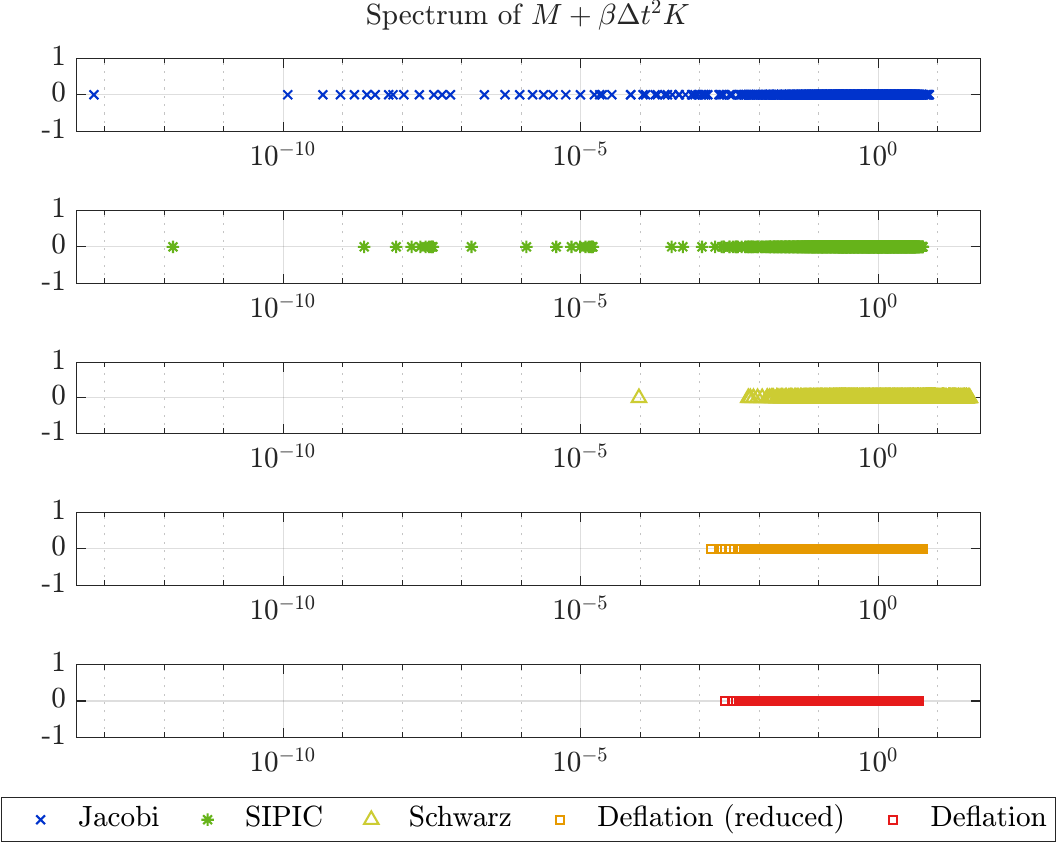}
    \caption{Preconditioned spectrum for $\delta=10^{-3.5}$}
    \label{fig: 2D_Laplace_waveguide_eig_A_Bspline_p3_Cp-1_n32_48_s20}
     \end{subfigure}
     \hfill
    \caption{Condition number and preconditioned spectrum (\Cref{ex: implicit_time_stepping})}
    \label{fig: 2D_Laplace_waveguide_eig_A_Bspline_p3_Cp-1_n32_48}
\end{figure}

The existence of many small eigenvalues in the case of the Jacobi and SIPIC preconditioners is already a red flag for the convergence of iterative solvers. This is unfortunately confirmed in the first time step of the Trapezoidal rule, which requires solving a linear system $A\bm{x}=\bm{b}$ where the right-hand side vector depends on the initial conditions in \eqref{eq: semi_discrete_pb}. For comparison, the ``exact'' solution is computed with a direct solver after rescaling the system, while the approximate solutions are computed for a tolerance of $\epsilon=10^{-9}$. The convergence of the relative error in the $A$-norm is shown in \Cref{fig: 2D_Laplace_waveguide_conv_err_A_Bspline_p3_Cp-1_n32_48} along with the relative preconditioned residual in \Cref{fig: 2D_Laplace_waveguide_conv_prec_res_A_Bspline_p3_Cp-1_n32_48}. Both quantities roughly convey the same message: with a deflation-based preconditioner, the solution converges almost independently of $\delta$. When reducing the deflation rank, the convergence follows two distinct patterns. They are due to a small adjustment of the deflation rank signaled by a slight change in the condition number in \Cref{fig: 2D_Laplace_waveguide_cond_Bspline_p3_Cp-1_n32_48}. Nevertheless, for a fixed rank, convergence takes place almost independently of $\delta$. On the contrary, the number of iterations for the Schwarz preconditioner clearly tends to increase, as expected from the growth of the condition number. The occasional flattening of the relative error in \Cref{fig: 2D_Laplace_waveguide_conv_err_A_Bspline_p3_Cp-1_n32_48} may be due to inaccuracies in the ``exact'' solution that serves to measure the error. Indeed, the ill-conditioning of a matrix also reduces the accuracy that is achievable with a direct solver \cite{higham2002accuracy}. The simultaneous flattening of the error for Schwarz and deflation further supports this explanation. The other two preconditioners perform very poorly, and the solution even fails to converge after $1000$ iterations, although CG would have terminated in exact arithmetic. This behavior is most likely attributed to the large number of small eigenvalues in the preconditioned systems.

\begin{figure}[H]
     \centering
     \begin{subfigure}[t]{0.48\textwidth}
    \centering
    \includegraphics[width=\textwidth]{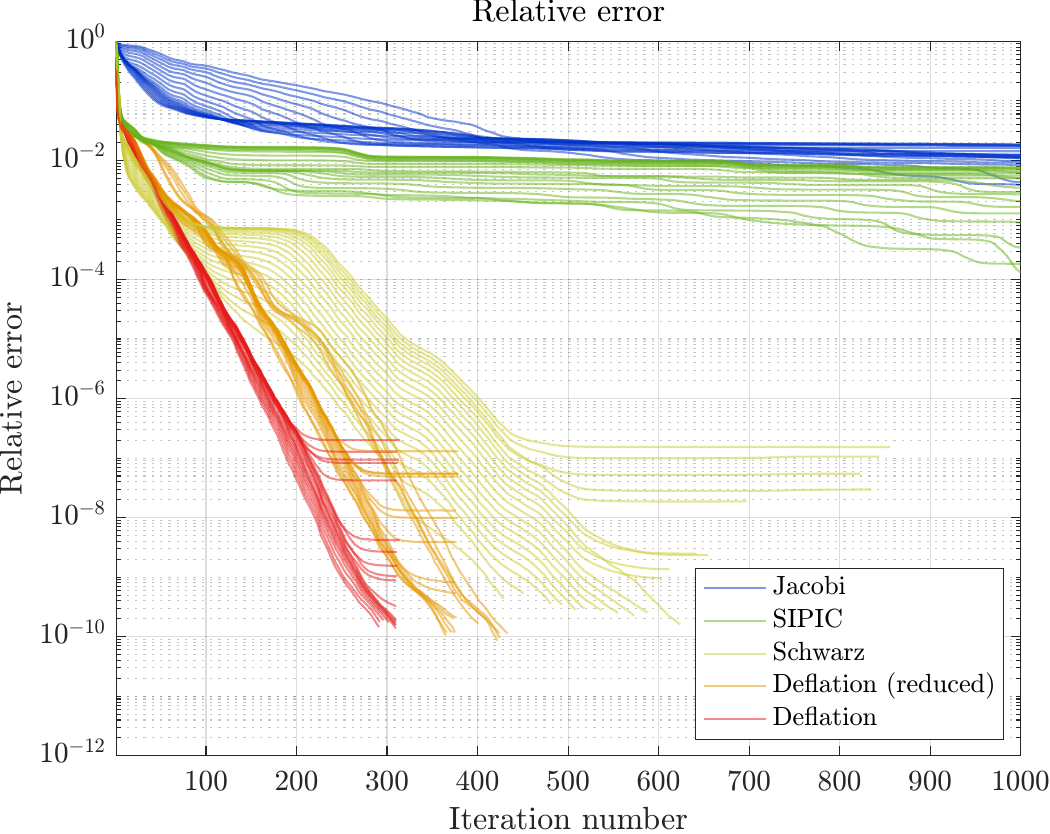}
    \caption{Relative error}
    \label{fig: 2D_Laplace_waveguide_conv_err_A_Bspline_p3_Cp-1_n32_48}
     \end{subfigure}
     \hfill
     \begin{subfigure}[t]{0.48\textwidth}
    \centering
    \includegraphics[width=\textwidth]{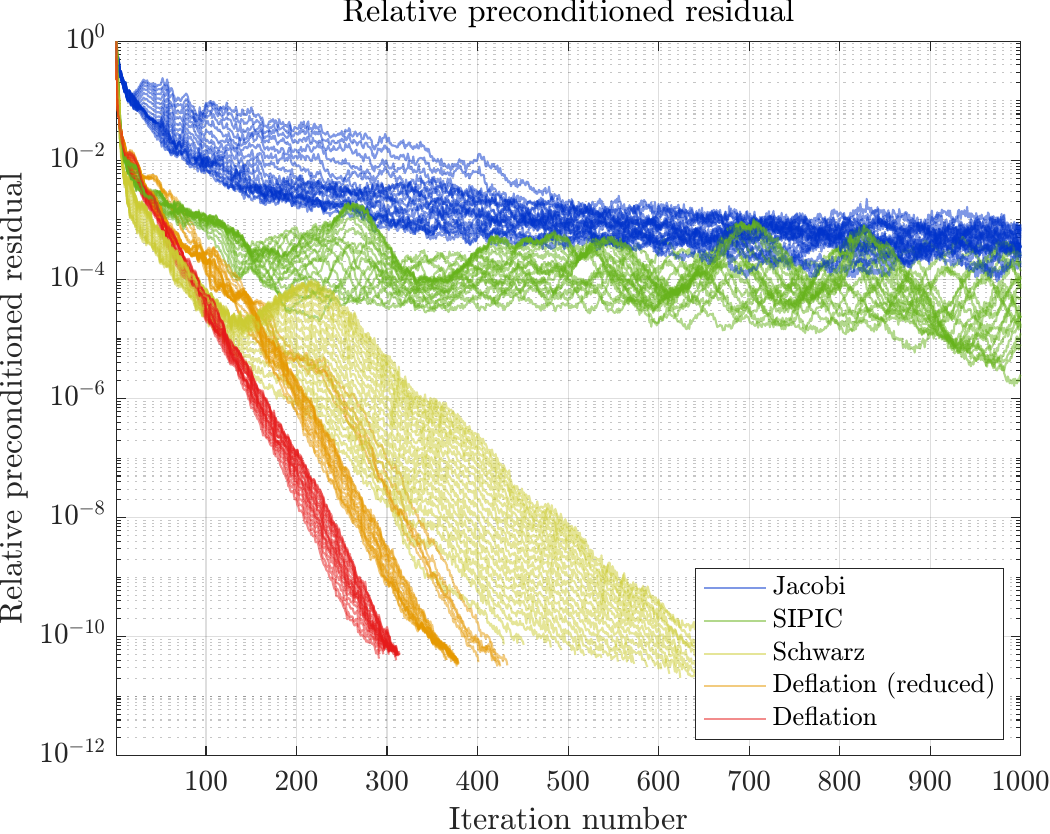}
    \caption{Relative preconditioned residual}
    \label{fig: 2D_Laplace_waveguide_conv_prec_res_A_Bspline_p3_Cp-1_n32_48}
     \end{subfigure}
     \hfill
    \caption{Convergence of the relative error and preconditioned residuals (\Cref{ex: implicit_time_stepping})}
    \label{fig: 2D_Laplace_waveguide_conv_Bspline_p3_Cp-1_n32_48}
\end{figure}
\end{example}

\section{Conclusion}
\label{se: conclusion}
In immersed finite element methods, small cut elements cause severely ill-conditioned system matrices requiring dedicated penalization, stabilization, or preconditioning techniques. In this article, we have revisited multiple preconditioning strategies for symmetric positive definite systems, as they commonly arise for elliptic PDEs but also for parabolic or hyperbolic ones in conjunction with time stepping schemes. In particular, our analysis reveals that the effectiveness of Jacobi preconditioning is way more complicated than earlier literature suggested. Even for a classical $C^0$ finite element discretization of a trimmed 1D line segment, its robustness critically depends on the choice of basis, contrary to common intuition. The analysis of Jacobi preconditioning becomes significantly more challenging in higher dimensions, where the cut configuration and the position of knots or interpolation points also play a role. 

The scatter in the effectiveness of Jacobi preconditioning had already led to various improvements in the form of Symmetric Incomplete Permuted Inverse Cholesky (SIPIC) and Schwarz preconditioners. Unfortunately, we have shown through a series of counter-examples that none of them are completely robust. The simple and almost realistic geometries on which our counter-examples were built prompted the development of a new deflation-based preconditioner in combination with Jacobi preconditioning. Deflation techniques are ideally suited to immersed finite element discretizations, where the ill-conditioning typically stems from a few small eigenvalues originating from basis functions that are only supported on small trimmed elements. Contrary to competing techniques, deflation treats those functions globally, in a single ``block'', by defining a deflation subspace that nearly coincides with the smallest eigenspace. This property is key to its success, as testified in the experiments at the end of the article. In fact, since most functions are already cured by diagonal scaling, we later suggested only retaining truly pathological basis functions based on near overlapping supports. Numerical experiments have confirmed the effectiveness of this rank-reduction technique in preserving the condition number. However, this strategy is less effective at maintaining the iteration count. There certainly exist better rank-reduction techniques, but this problem is left as future work.

In any case, whether with or without rank-reduction, deflation only treats the ill-conditioning originating from small trimmed elements. Combining it with $h,p$-robust (multigrid) preconditioners might become necessary in practice, but this is a whole different research topic. In future work, we plan to extend our deflation-based preconditioner to nonsymmetric or indefinite systems arising from mixed finite element discretizations.

\section*{Acknowledgments}
This research was mostly carried out while the first author was visiting the Numerical Analysis group at TU Delft during fall 2025. The hospitality and support of the group are kindly acknowledged. The authors would also like to thank Frits de Prenter for insightful discussions that inspired many of the counter-examples presented in this work.
Pablo Antolin acknowledges the financial support of the Swiss National Science Foundation through
the project FLAS$_h$ (200021\_214987).

\section*{Code availability}
The code for reproducing the numerical experiments will be available prior to publication.

\begin{appendices}
\section{Additional examples}
\label{se: additional_examples}

\begin{example}[Non-uniform grid]
\label{ex: perturbed_grid}
In order to assess the robustness of the scaling relations in \Cref{tab: scaling_jacobi}, we consider a slight variant of \Cref{ex: ridge}, where the vertical grid lines are randomly perturbed, thereby de-centering the cut configuration in \Cref{fig: 2D_Laplace_ridge_corner_mesh}. Due to the complete lack of symmetry and structure, $z_1$ should neither coincide with a knot (for spline bases) nor an interpolation point (for Lagrange bases). Therefore, the expected scaling from \Cref{tab: scaling_jacobi} is $O(\eta^{-p})$ for spline bases and $O(\eta^{-(2p-1)})$ for Lagrange bases. \Cref{fig: 2D_Laplace_ridge_center_pert_cond_M_Jacobi} confirms those scalings. Despite a few outlier points, the trend is very clear. Hence, the scaling relations in \Cref{tab: scaling_jacobi} also hold for non-uniform grids.

\begin{figure}[H]
    \centering
    \includegraphics[width=0.5\textwidth]{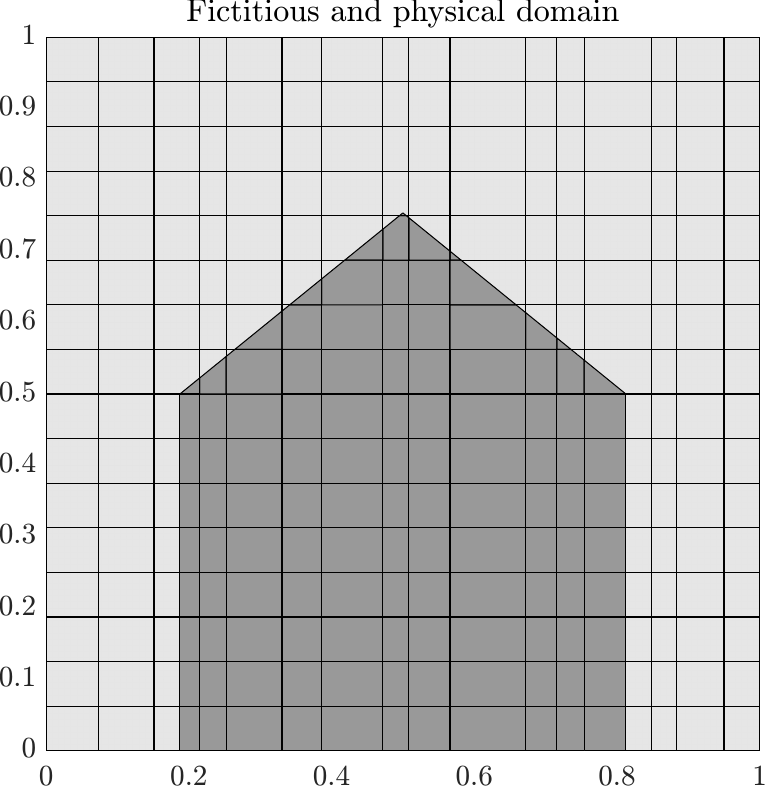}
    \caption{House-like structure with perturbed grid lines (\Cref{ex: perturbed_grid})}
    \label{fig: 2D_Laplace_ridge_corner_pert_mesh}
\end{figure}

\begin{figure}[H]
     \centering
     \begin{subfigure}[t]{0.31\textwidth}
    \centering
    \includegraphics[width=\textwidth]{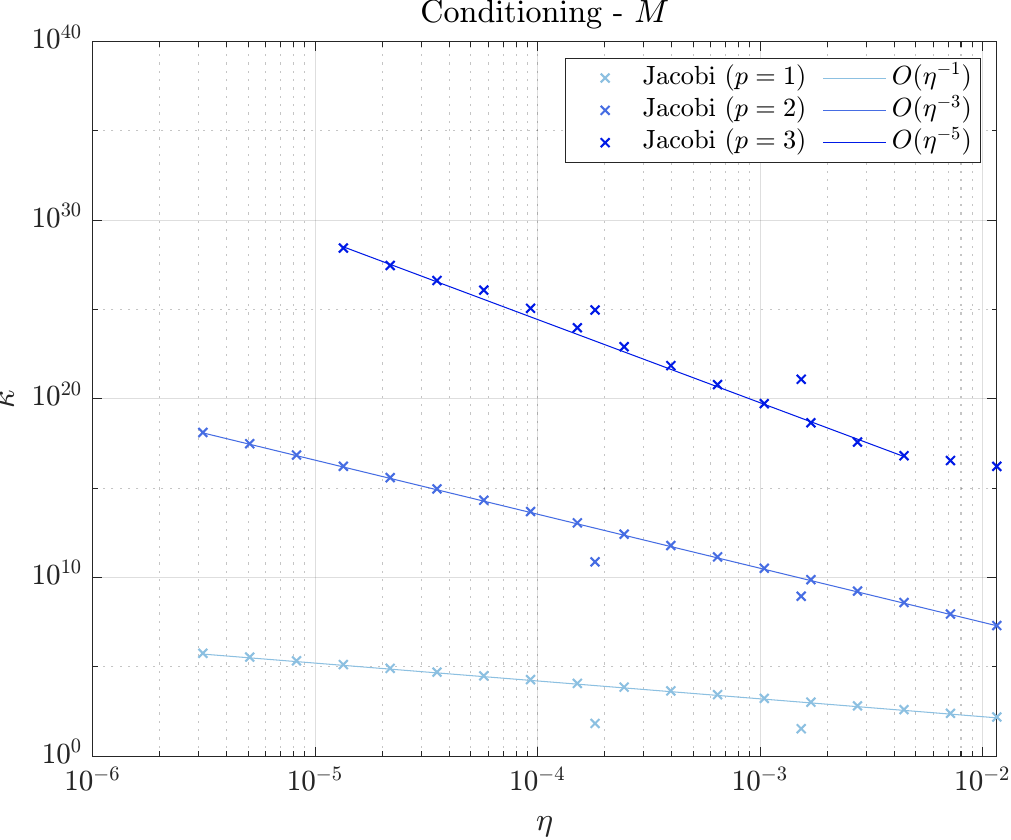}
    \caption{Lagrange basis}
    \label{fig: 2D_Laplace_ridge_center_pert_cond_M_Jacobi_Lagrange}
     \end{subfigure}
     \hfill
     \begin{subfigure}[t]{0.31\textwidth}
    \centering
    \includegraphics[width=\textwidth]{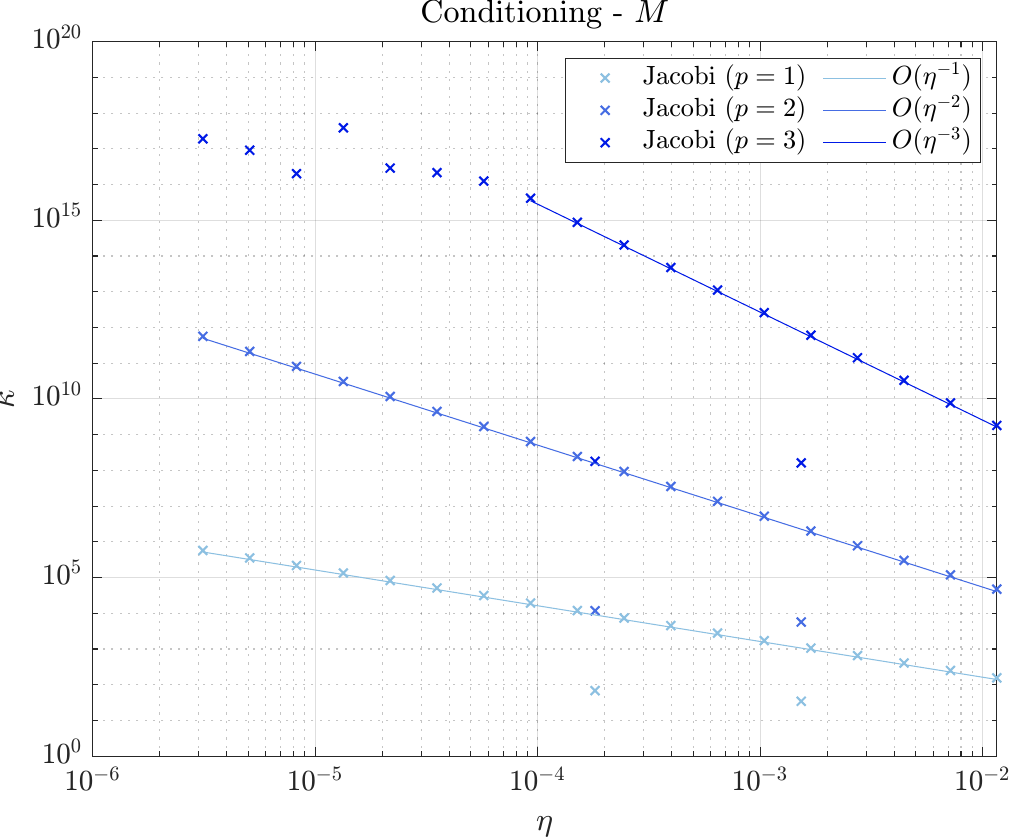}
    \caption{$C^0$ B-spline basis}
    \label{fig: 2D_Laplace_ridge_center_pert_cond_M_Jacobi_Bspline_C0}
     \end{subfigure}
     \hfill
    \begin{subfigure}[t]{0.31\textwidth}
    \centering
    \includegraphics[width=\textwidth]{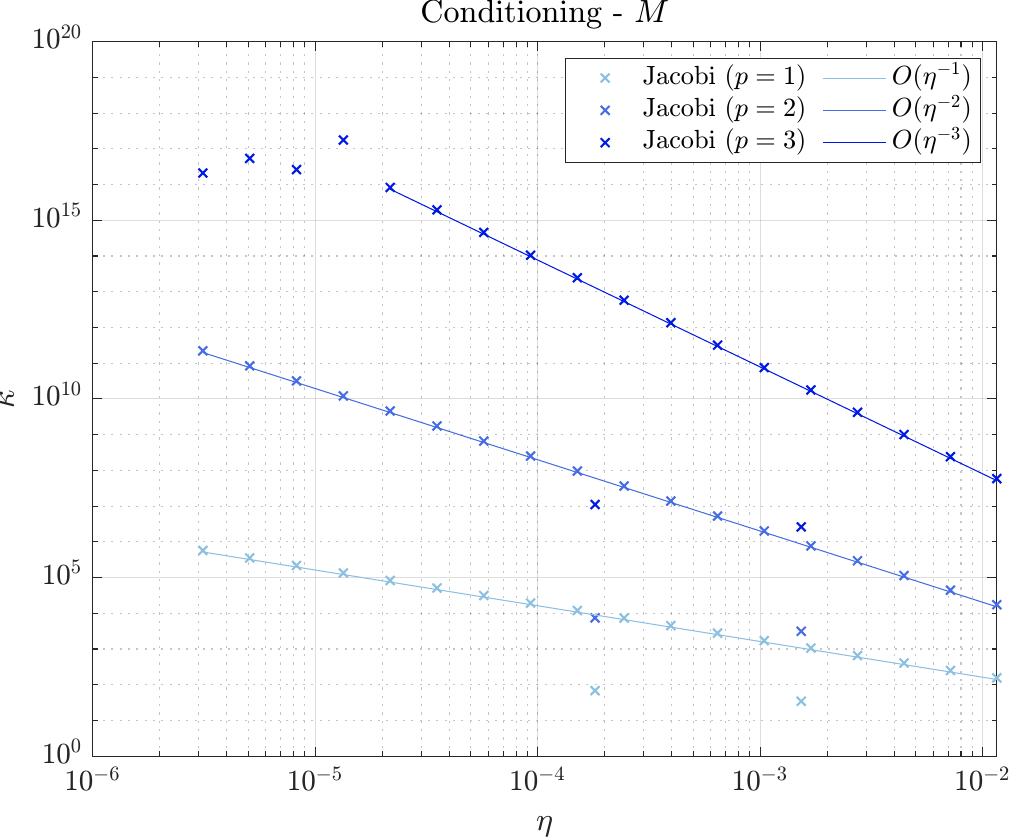}
    \caption{$C^{p-1}$ B-spline basis}
    \label{fig: 2D_Laplace_ridge_center_pert_cond_M_Jacobi_Bspline_Cp-1}
     \end{subfigure}
     \hfill
    \caption{Conditioning of the Jacobi preconditioned mass matrix for the house-like structure with perturbed grid lines (\Cref{ex: perturbed_grid})}
    \label{fig: 2D_Laplace_ridge_center_pert_cond_M_Jacobi}
\end{figure}
\end{example}

\section{Derivation of the deflated preconditioned conjugate gradient method}
\label{app: dpcg}
The Deflated Preconditioned Conjugate Gradient (DPCG) method directly follows from applying the standard CG method (recalled in \Cref{algo: cg}) to the deflated preconditioned system 
\begin{equation}
\label{eq: deflated_preconditioned_system_app}
    L^{-1}PAL^{-T}\hat{\bm{x}}=L^{-1}P\bm{b}
\end{equation}
and redefining variables to eliminate the Cholesky factors. This process is analogous to the derivation of the preconditioned conjugate gradient (PCG) method but is slightly more technical. Therefore, the transformations are detailed below for each step of the algorithm. For clarity, the quantities obtained from applying \Cref{algo: cg} to \eqref{eq: deflated_preconditioned_system_app} are identified with a hat.

Firstly, given an initial vector for iteratively solving $A\bm{x}=\bm{b}$, line 1 of \Cref{algo: cg} leads to
\begin{equation*}
    \hat{\bm{r}}_0=L^{-1}P(\bm{b}-A\bm{x}_0) = L^{-1}P\bm{r}_0 \quad \text{and} \quad \hat{\bm{p}}_0=\hat{\bm{r}}_0.
\end{equation*}
In order to derive a ``Cholesky-free'' version of deflated PCG, we introduce the auxiliary tilde-quantities
\begin{equation*}
    \tilde{\bm{r}}_j = L\hat{\bm{r}}_j \quad \text{and} \quad \tilde{\bm{p}}_j = L^{-T}\hat{\bm{p}}_j.
\end{equation*}
Similarly to PCG, we rewrite the algorithm in terms of the tilde-quantities:
\begin{itemize}[leftmargin=*, align=left, labelwidth=!]
    \item[Line 3:]
    \begin{equation*}
        \hat{\alpha}_j = (\hat{\bm{r}}_j,\hat{\bm{r}}_j)/(\hat{\bm{p}}_j,L^{-1}PAL^{-T}\hat{\bm{p}}_j) = (\tilde{\bm{r}}_j,H^{-1}\tilde{\bm{r}}_j)/(\tilde{\bm{p}}_j,PA\tilde{\bm{p}}_j).
    \end{equation*}
    \item[Line 4:] denoting $\tilde{\bm{x}}_j=L^{-T}\hat{\bm{x}}_j$ the conjugate gradient iterates of the deflated system $PA\bm{\tilde{x}}=P\bm{b}$, we obtain
    \begin{equation*}
    \hat{\bm{x}}_{j+1}=\hat{\bm{x}}_j+\hat{\alpha}_j\hat{\bm{p}}_j \iff \tilde{\bm{x}}_{j+1}=\tilde{\bm{x}}_j+\hat{\alpha}_j\tilde{\bm{p}}_j.
    \end{equation*}
    \item[Line 5:] 
    \begin{equation*}
        \hat{\bm{r}}_{j+1}=\hat{\bm{r}}_j-\hat{\alpha}_jL^{-1}PAL^{-T}\hat{\bm{p}}_j \iff \tilde{\bm{r}}_{j+1}=\tilde{\bm{r}}_j-\hat{\alpha}_jPA\tilde{\bm{p}}_j.
    \end{equation*}
    \item[Line 6:]
    \begin{equation*}
        \hat{\beta}_j=(\hat{\bm{r}}_{j+1},\hat{\bm{r}}_{j+1})/(\hat{\bm{r}}_j,\hat{\bm{r}}_j)=(\tilde{\bm{r}}_{j+1},H^{-1}\tilde{\bm{r}}_{j+1})/(\tilde{\bm{r}}_j,H^{-1}\tilde{\bm{r}}_j).
    \end{equation*}
    \item[Line 7:]
    \begin{equation*}
        \hat{\bm{p}}_{j+1}=\hat{\bm{r}}_{j+1}+\hat{\beta}_j \hat{\bm{p}}_j \iff  \tilde{\bm{p}}_{j+1}=H^{-1}\tilde{\bm{r}}_{j+1}+\hat{\beta}_j\tilde{\bm{p}}_{j}.
    \end{equation*}
\end{itemize}
The residuals $\tilde{\bm{r}}_j=P(\bm{b}-A\tilde{\bm{x}}_j)$ computed during the course of the algorithm are those of the deflated system $PA\bm{\tilde{x}}=P\bm{b}$. However, if we define the iterates of the original system as (see \eqref{eq: solution})
\begin{equation}
\label{eq: approximate_solution}
    \bm{x}_j = P^T \tilde{\bm{x}}_j + Z(Z^TAZ)^{-1}Z^T\bm{b},
\end{equation}
then, thanks to \Cref{lem: basic_properties}, \ref{prop: conjugacy}, the residuals for the original system are
\begin{align*}
    \bm{r}_j &= \bm{b}-A\bm{x}_j \\
    &=\bm{b}-AZ(Z^TAZ)^{-1}Z^T\bm{b}-AP^T\tilde{\bm{x}}_j \\
    &=P\bm{b}-PA\tilde{\bm{x}}_j \\
    &=P(\bm{b}-A\tilde{\bm{x}}_j)=\tilde{\bm{r}}_j.
\end{align*}
Therefore, the residuals computed in the algorithm are those of the original system. Finally, we obtain \Cref{algo: dpcg_app} by dropping the tilde over all quantities (see also \cite[Algorithm 6]{tang2008two}).

\begin{algorithm}[H]
\begin{algorithmic}[1]
\caption{Conjugate Gradient (CG) \cite[Algorithm 6.18]{saad2003iterative}}
\label{algo: cg}
\State Set $\bm{r}_0=\bm{b}-A\bm{x}_0$ and $\bm{p}_0=\bm{r}_0$.
\For{$j=0,1,\dots$ until convergence}
    \State $\alpha_j = (\bm{r}_j,\bm{r}_j)/(\bm{p}_j,A\bm{p}_j)$
    \State $\bm{x}_{j+1}=\bm{x}_j+\alpha_j\bm{p}_j$
    \State $\bm{r}_{j+1}=\bm{r}_j-\alpha_jA\bm{p}_j$
    \State $\beta_j=(\bm{r}_{j+1},\bm{r}_{j+1})/(\bm{r}_j,\bm{r}_j)$
    \State $\bm{p}_{j+1}=\bm{r}_{j+1}+\beta_j \bm{p}_j$
\EndFor
\end{algorithmic}
\end{algorithm}

\begin{algorithm}[H]
\begin{algorithmic}[1]
\caption{Deflated Preconditioned Conjugate Gradient (DPCG) \cite[Algorithm 1]{vermolen2004deflation}}
\label{algo: dpcg_app}
\State Set $\bm{r}_0=P(\bm{b}-A\bm{x}_0)$ and $\bm{p}_0=H^{-1}\bm{r}_0$.
\For{$j=0,1,\dots$ until convergence}
    \State $\alpha_j = (\bm{r}_j,H^{-1}\bm{r}_j)/(\bm{p}_j,PA\bm{p}_j)$
    \State $\bm{x}_{j+1}=\bm{x}_j+\alpha_j\bm{p}_j$
    \State $\bm{r}_{j+1}=\bm{r}_j-\alpha_jPA\bm{p}_j$
    \State $\beta_j=(\bm{r}_{j+1},H^{-1}\bm{r}_{j+1})/(\bm{r}_j,H^{-1}\bm{r}_j)$
    \State $\bm{p}_{j+1}=H^{-1}\bm{r}_{j+1}+\beta_j \bm{p}_j$
\EndFor
\State $\bm{x}_{j+1} = Z(Z^TAZ)^{-1}Z^T\bm{b} + P^T\bm{x}_{j+1}$
\end{algorithmic}
\end{algorithm}

In order to devise a reliable stopping criterion for \Cref{algo: dpcg_app}, we must understand which quantities control the relative error. We begin with a preliminary lemma for a positive semidefinite matrix.

\begin{lemma}
\label{lem: error_bound_spsd}
Let $\{\bm{x}_j\}$ be a sequence of approximate solutions to the consistent linear system $A\bm{x}=\bm{b}$, where $A \in \mathbb{R}^{n \times n}$ is symmetric positive semidefinite with nullity $r$ and $\bm{b} \neq \bm{0}$. Then,
\begin{equation*}
    \frac{\|\bm{x}-\bm{x}_j\|_A}{\|\bm{x}\|_A} \leq \sqrt{\keff(A)} \frac{\|\bm{r}_j\|_2}{\|\bm{b}\|_2}.
\end{equation*}
\end{lemma}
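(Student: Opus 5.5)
The plan is to reproduce the classical argument for SPD matrices while working only on $\ker(A)^\perp = R(A)$, where $A$ behaves like an SPD operator with extreme eigenvalues $\lambda_{r+1}(A)$ and $\lambda_n(A)$.

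The first step sets up the relevant quantities. Let $\bm{e}_j = \bm{x}-\bm{x}_j$, so that $\bm{r}_j = \bm{b}-A\bm{x}_j = A\bm{e}_j$. Since $A$ is symmetric, $\mathbb{R}^n = \ker(A) \oplus R(A)$ orthogonally. The seminorm $\|\bm{v}\|_A = \sqrt{\bm{v}^TA\bm{v}}$ only sees the component of $\bm{v}$ in $R(A)$. Write $\bm{e}_j = \bm{e}_j^0 + \bm{e}_j^\perp$ and $\bm{x} = \bm{x}^0 + \bm{x}^\perp$ accordingly. Then
\begin{equation*}
\|\bm{e}_j\|_A = \|\bm{e}_j^\perp\|_A, \qquad \bm{r}_j = A\bm{e}_j^\perp, \qquad \|\bm{x}\|_A = \|\bm{x}^\perp\|_A, \qquad \bm{b} = A\bm{x}^\perp.
\end{equation*}
Because $\bm{b}\neq\bm{0}$, we have $\bm{x}^\perp \neq \bm{0}$ and $\|\bm{x}\|_A>0$, so the left-hand side is well defined.

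The second step bounds the numerator and denominator separately, using the spectral decomposition of $A$ restricted to $R(A)$. The nonzero eigenvalues are $\lambda_{r+1}(A)\le\dots\le\lambda_n(A)$, with orthonormal eigenvectors spanning $R(A)$.

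For the numerator, take $\bm{v}\in R(A)$ and expand it in these eigenvectors:
\begin{equation*}
\|\bm{v}\|_A^2 = \sum_{k>r}\lambda_k v_k^2 \le \frac{1}{\lambda_{r+1}}\sum_{k>r}\lambda_k^2 v_k^2 = \frac{\|A\bm{v}\|_2^2}{\lambda_{r+1}(A)}.
\end{equation*}
With $\bm{v}=\bm{e}_j^\perp$ this gives $\|\bm{e}_j\|_A^2 \le \|\bm{r}_j\|_2^2/\lambda_{r+1}(A)$.

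For the denominator, the same expansion gives
\begin{equation*}
\|A\bm{v}\|_2^2 = \sum_{k>r}\lambda_k^2 v_k^2 \le \lambda_n\sum_{k>r}\lambda_k v_k^2 = \lambda_n(A)\,\|\bm{v}\|_A^2.
\end{equation*}
With $\bm{v}=\bm{x}^\perp$ this gives $\|\bm{b}\|_2^2 \le \lambda_n(A)\|\bm{x}\|_A^2$, that is, $\|\bm{x}\|_A^2 \ge \|\bm{b}\|_2^2/\lambda_n(A)$.

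The third step divides the two estimates:
\begin{equation*}
\frac{\|\bm{x}-\bm{x}_j\|_A^2}{\|\bm{x}\|_A^2} \le \frac{\lambda_n(A)}{\lambda_{r+1}(A)}\frac{\|\bm{r}_j\|_2^2}{\|\bm{b}\|_2^2}.
\end{equation*}
Taking square roots and recalling $\keff(A)=\lambda_n(A)/\lambda_{r+1}(A)$ yields the claim.

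The only delicate point is the non-uniqueness of $\bm{x}$. This is why everything is phrased through the components in $R(A)$. Both sides of the inequality are independent of which solution $\bm{x}$ is chosen, since solutions differ by elements of $\ker(A)$, which the seminorm ignores. Once this is noted, the rest is routine. The lemma will then be applied to $\hat{P}\hat{A}$ to obtain \eqref{eq: dp_bound}.
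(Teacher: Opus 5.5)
Your proof is correct and takes essentially the same route as the paper. Both arguments restrict the spectral bounds to $R(A)$ to obtain $\|\bm{r}_j\|_2^2 \geq \lambda_{r+1}(A)\|\bm{x}-\bm{x}_j\|_A^2$ and $\|\bm{b}\|_2^2 \leq \lambda_n(A)\|\bm{x}\|_A^2$, and then divide one by the other; the paper phrases this through $A^{1/2}(\bm{x}-\bm{x}_j)$ having no null-space component, whereas you write out the eigen-expansion explicitly, which is only a cosmetic difference.
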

\begin{proof}
First note that solutions to $A\bm{x}=\bm{b}$ only differ by a vector $\bm{v} \in \ker(A)$. Therefore, $\|\bm{x}\|_A$ is invariant and does not identically vanish since $\bm{b} \neq \bm{0}$.
Then, from the equation
\begin{equation*}
    \bm{r}_j = \bm{b} - A\bm{x}_j = A(\bm{x}-\bm{x}_j),
\end{equation*}
we deduce that
\begin{equation}
    \|\bm{r}_j\|_2^2 = \|A(\bm{x}-\bm{x}_j)\|_2^2 =(\bm{x}-\bm{x}_j)^TA^{1/2}AA^{1/2}(\bm{x}-\bm{x}_j) \geq \lambda_{r+1}(A)\|\bm{x}-\bm{x}_j\|_A^2, \label{eq: numerator}
\end{equation}
where the last step follows from the fact that $A^{1/2}(\bm{x}-\bm{x}_j)$ does not have any components in the null space of $A$. Similarly,
\begin{equation}
\label{eq: denominator}
    \|\bm{b}\|_2^2 = \|A\bm{x}\|_2^2 = \bm{x}^TA^{1/2}AA^{1/2}\bm{x} \leq \lambda_n(A)\|\bm{x}\|_A^2.
\end{equation}
The result now follows from combining \eqref{eq: numerator} with \eqref{eq: denominator}.
\end{proof}
The next corollary proves the error bound \eqref{eq: dp_bound}.

\begin{corollary}
The approximate solutions \eqref{eq: approximate_solution} of the deflated preconditioned system satisfy
\begin{equation*}
    \frac{\|\bm{x}-\bm{x}_j\|_{PA}}{\|\bm{x}\|_{PA}} \leq \sqrt{\keff(H^{-1}PA)} \frac{\|\bm{r}_j\|_{H^{-1}}}{\|P\bm{b}\|_{H^{-1}}}.
\end{equation*}
\end{corollary}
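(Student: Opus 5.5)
The plan is to apply \Cref{lem: error_bound_spsd} to the transformed system \eqref{eq: deflated_preconditioned_system_app}. Its matrix $\hat{P}\hat{A}=L^{-1}PAL^{-T}$ is symmetric positive semidefinite: by \Cref{lem: basic_properties}, \ref{prop: SPSP}, $PA\succeq 0$ and symmetric, and a congruence preserves this. Its nullity is $r$, since $\ker(L^{-1}PAL^{-T})=L^{T}\ker(PA)=L^TR(Z)$ by \ref{prop: ker_PA}. The system is consistent, because $\bm{x}$ solves $PA\bm{x}=P\bm{b}$ and hence $\hat{\bm{x}}=L^T\bm{x}$ solves it. We assume $P\bm{b}\neq\bm{0}$, which is implicit in the statement. \Cref{lem: error_bound_spsd} then yields
\begin{equation*}
\frac{\|\hat{\bm{x}}-\hat{\bm{x}}_j\|_{\hat{P}\hat{A}}}{\|\hat{\bm{x}}\|_{\hat{P}\hat{A}}}\leq\sqrt{\keff(\hat{P}\hat{A})}\,\frac{\|\hat{\bm{r}}_j\|_2}{\|L^{-1}P\bm{b}\|_2},
\end{equation*}
where $\hat{\bm{x}}_j=L^T\tilde{\bm{x}}_j$ are the CG iterates on the transformed system.

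The next step translates each quantity back to the original variables. First, $\|\hat{\bm{v}}\|_{\hat{P}\hat{A}}^2=\bm{v}^TPA\bm{v}$ for $\hat{\bm{v}}=L^T\bm{v}$, so the left side equals $\|\bm{x}-\tilde{\bm{x}}_j\|_{PA}/\|\bm{x}\|_{PA}$. Second, $\|L^{-1}\bm{w}\|_2^2=\bm{w}^TH^{-1}\bm{w}$, so $\|\hat{\bm{r}}_j\|_2=\|\tilde{\bm{r}}_j\|_{H^{-1}}$ and $\|L^{-1}P\bm{b}\|_2=\|P\bm{b}\|_{H^{-1}}$. Third, $\hat{P}\hat{A}$ is similar to $H^{-1}PA$ (conjugate by $L^{-T}$), so the two have the same eigenvalues and hence the same effective condition number. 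Finally, the appendix derivation showed $\tilde{\bm{r}}_j=\bm{r}_j$.

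The remaining step, which I expect to be the only delicate one, is replacing $\tilde{\bm{x}}_j$ by the recovered iterate $\bm{x}_j=P^T\tilde{\bm{x}}_j+Z(Z^TAZ)^{-1}Z^T\bm{b}$ of \eqref{eq: approximate_solution} inside the $PA$-seminorm. Using \eqref{eq: solution}, $\bm{x}-\bm{x}_j=P^T(\bm{x}-\tilde{\bm{x}}_j)$. I would then show $\|P^T\bm{v}\|_{PA}=\|\bm{v}\|_{PA}$ for all $\bm{v}$, via
\begin{equation*}
PP^TAP^T=P\,PA\,P^T=P^2A=PA,
\end{equation*}
where the middle steps use \ref{prop: conjugacy} and \ref{prop: projector}. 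This gives $(P^T\bm{v})^TPA(P^T\bm{v})=\bm{v}^TPA\bm{v}$. Alternatively, since $\bm{v}-P^T\bm{v}\in R(Z)=\ker(PA)$, the seminorm cannot distinguish the two vectors. Combining these identities gives the stated bound.
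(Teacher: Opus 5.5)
Your proposal is correct and follows the paper's route. The paper's proof is the one-line instruction to apply \Cref{lem: error_bound_spsd} to \eqref{eq: deflated_preconditioned_system_app}, and you supply the checks it omits: semidefiniteness, nullity $r$, consistency, conversion to $H^{-1}$-norms, similarity to $H^{-1}PA$, and the identity $\|P^T(\bm{x}-\tilde{\bm{x}}_j)\|_{PA}=\|\bm{x}-\tilde{\bm{x}}_j\|_{PA}$ that reconciles \eqref{eq: approximate_solution} with the CG iterates. One small slip: the matrix of $\bm{v}\mapsto (P^T\bm{v})^TPA(P^T\bm{v})$ is $P\,PA\,P^T$, not $PP^TAP^T$ (which generally differs from $PA$), but your chain from $P\,PA\,P^T$ onward and your argument via $R(Z)=\ker(PA)$ are both valid.
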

\begin{proof}
The result follows from applying \Cref{lem: error_bound_spsd} to the deflated preconditioned system \eqref{eq: deflated_preconditioned_system_app}.
\end{proof}
\end{appendices}

\end{document}